\newtheorem{theorem}{Theorem}
\newtheorem{proposition}{Proposition}
\newtheorem{corollary}{Corollary}
\newtheorem{lemma}{Lemma}
\newtheorem{assumption}{Assumption}
\newtheorem{remark}{Remark}
\numberwithin{theorem}{section} \numberwithin{equation}{section}
\numberwithin{proposition}{section} \numberwithin{lemma}{section}
\numberwithin{corollary}{section}
\newcommand{\thmref}[1]{Theorem~\ref{thm:#1}} 
\newcommand{\lemref}[1]{Lemma~\ref{lem:#1}} 
\newcommand{\eqnref}[1]{(\ref{eq:#1})} 
\def\be{\begin{equation} }
\def\ee{ \end{equation}}
\def\ben{\begin{equation*}}
\def\een{\end{equation*}}
\def\bea{\begin{eqnarray}}
\def\eea{\end{eqnarray}}
\def\ee{\end{eqnarray}}
\def\bean{\begin{eqnarray*}}
\def\eean{\end{eqnarray*}}
\newcommand\ignore[1]{}
\def\R{\mathbb{R}} 
\def\Z{\mathbb{Z}} 
\def\N{\mathbb{N}} 
\newcommand{\Ex}[1]{\mathbb{E}\left[#1\right]} 
\newcommand{\Exp}[2]{\mathbb{E}_{#1}\left[#2\right]} 
\newcommand{\Ind}[1]{\chi_{#1}} 
\newcommand{\Var}[1]{\mathbb{V}\left(#1\right)} 
\renewcommand{\Pr}[1]{\mathbb{P}\left(#1\right)} 
\newcommand{\bigoh}[1]{O\left(#1\right)}
\def\sF{\mathcal{F}}
\def\sG{\mathcal{G}}
\def\sS{{\sf Supp}}
\newcommand\QED{\ifhmode\allowbreak\else\nobreak\fi
\quad\nobreak$\Box$\medbreak}
\newcommand{\proofstart}{\par\noindent\sl Proof:\rm\enspace}
\newcommand{\proofend}{\QED\par}
\newenvironment{proof}{\proofstart}{\proofend}
\def\eps{\epsilon}
\def\tr{{\rm Tr}}
\def\tr{{\rm Tr}}
\def\tr{{\rm tr}}
\def\Sum{{\sf \mathcal{S}}}
\newcommand{\Leaves}[1]{{#1}^\spadesuit}
\def\supp{{\rm supp}^* }
\begin{document}

\title{A high dimensional Central Limit Theorem for martingales, with applications to context tree models}
\author{Alexandre Belloni\thanks{The Fuqua School of Business, Duke University.} \, and Roberto I. Oliveira\thanks{IMPA, Rio de Janeiro, Brazil. }}

\maketitle

\abstract{We establish a central limit theorem for (a sequence of) multivariate martingales which dimension potentially grows with the length $n$ of the martingale. A consequence of the results are Gaussian couplings and a multiplier bootstrap for the maximum of a multivariate martingale whose dimensionality $d$ can be as large as $e^{n^c}$ for some $c>0$.  We also develop new anti-concentration bounds for the maximum component of a high-dimensional Gaussian vector, which we believe is of independent interest.

The results are applicable to a variety of settings. We fully develop its use to the estimation of context tree models (or variable length Markov chains) for discrete stationary time series. Specifically, we provide a bootstrap-based rule to tune several regularization parameters in a theoretically valid Lepski-type method. Such bootstrap-based approach accounts for the correlation structure and leads to potentially smaller penalty choices, which in turn improve the estimation of the transition probabilities.}

%

\section{Introduction}

Modern statistical applications often involve high dimensional settings. Typical problems include fitting complex models, tuning estimators with many parameters, and providing confidence regions for high-dimensional data. Prominent examples of methods include $\ell_1$-penalized estimators (e.g., Lasso and its variants), post-selection and debiased estimators, among many others. Addressing these challenges requires tools for quantifying uncertainty. Recent papers have addressed this need with Central Limit Theorems and bootstrap methods that work even when the problem dimension greatly exceeds the sample size \cite{chernozhukov2012comparison,chernozhukov2013gaussian,chernozhukov2014clt} for suitable statistics. These results have attracted substantial interest to handle modern applications (see \cite{belloni2018high} and the references therein) and have motivated many new theoretical developments \cite{chernozhukov2013testing,chernozhukov2015noncenteredprocesses,deng2017beyond,zhang2017gaussian,belloni2018subvector,lopes2018bootstrapping}. These contributions are discussed below.

In this paper, we contribute to this literature in two directions. Our first main (theoretical) contribution is a multidimensional martingale CLT and an attendant bootstrap method. Our second main contribution (on the applications side) is to show how the proposed bootstrap procedure can be used in the estimation of categorical time series via Variable Length Markov Chains (VLMC). Motivated by this application, we also obtain new anti\=/concentration results for the maximum component of a high-dimensional Gaussian vector that are of independent interest. 

\subsection{The martingale CLT and the bootstrap}

Martingales naturally arise in a variety of applications \cite{hall1980martingale}. They capture the key zero-mean increment property in many time series settings. In particular, Central Limit Theorems for martingales have attracted substantial interest. On the one hand, martingale CLTs imply results for processes with dependencies. On the other hand, they allow the construction of confidence intervals and hypothesis testing in substantially more general settings than the i.i.d. case.

In Section \ref{SEC:CLTs}, we establish a quantitative Central Limit Theorem for a (triangular sequence of) martingales whose dimension increases with its length (Theorem \ref{thm:lindeberg}). The dimensionality of the data and its interplay with functionals of interest plays a critical role in the analysis.

The CLT is specialized in Section \ref{SEC:MAX} to maximum-type functionals. Under some assumptions, we show Gaussian approximation and bootstrap results for the maximum component of high-dimensional martingales (Theorem \ref{thm:clt:max:martingale} and Corollary \ref{cor:bootstrap}). Most notably, our CLT gives good bounds in a truly high-dimensional setting when the martingale length $n$ and the dimension $d$ satisfy $\log^c d=o(n)$ for some positive $c>0$. This is illustrated by the simple application in Section \ref{SEC:MM}.

Our results build upon recent work on the maximum component of a sum of independent vectors \cite{chernozhukov2012comparison,chernozhukov2012gaussian,chernozhukov2013gaussian,deng2017beyond}, while at the same time incorporating dependencies. There has also been some related work on the dependent setting. Reference  \cite{chernozhukov2013testing} establishes the validity of a block multiplier bootstrap under $\beta$-mixing conditions. References \cite{zhang2014bootstrapping,zhang2017gaussian} consider functionally dependent time series data under different contraction conditions (see \cite{wu2005nonlinear}). In all these results the dimensionality of the vectors $d$ is potentially much larger than the sample size $n$. The proof strategy of these works that handle dependence relies on Slepian interpolation, which does not seem to generalize well to the martingale setting. We thus rely on an adaptation of the standard Lindeberg approach that works in the $d\gg n$ setting \footnote{\cite{deng2017beyond} uses a Lindeberg based proof but for the independent case.}.

Another aspect of our CLT for the maximum has to do with anti-concentration. Establishing a CLT requires that approximation errors vanish faster than the probabilities we are trying to estimate. Anti\=/concentration bounds for Gaussian vectors have been a key tool for establishing this property \cite{chernozhukov2012comparison,gotze2017gaussian}; however, these results require good control of the smallest variance of the Gaussian vector. This is a severe restriction in our VLMC application, where some variances may be very close to $0$ (if not $0$). To circumvent this problem, we give in Theorem \ref{thm:max:anticoncentration} an anti\=/concentration bound that depends only on the maximum variance. This is possible as long as we are concerned with the anti-concentration behavior in the tails, in contrast to the anti-concentration at all values that has been the focus so far. We believe our anti\=/concentration result will be useful in many other settings. 

We note that a very recent result in \cite{lopes2018bootstrapping} also derives anti\=/concentration of the maximum of a Gaussian vector with vanishing minimum variance. Their main assumption is that the variances of components have a suitable structure. Our result was developed independently and nearly simultaneously with \cite{lopes2018bootstrapping}.

\subsection{Categorical time series estimation via VLMC}

In Section \ref{SECCONTEXT} we use our CLT and bootstrap to improve an algorithm that estimates transition probabilities and dependency structures of discrete-alphabet stochastic processes.

Our procedure is based on {\em variable length Markov chain} models, also known as {\em context tree models}, that are generalizations of finite
order Markov chains. Context tree processes first appeared in Rissanen's seminal paper \cite{Rissanen1983}. Since then they have attracted considerable attention in different communities as they combine parsimonious models, interpretability and computational tractability \cite{BuhlmannWyner1999,Buhlmann2000,FerrariWyner2003,WillemsEtAl1995,Garivier2006,CsiszarTalata2006,TalataDuncan2009,CsiszarShields1996,Buhlmann1999,GarivierLeonardi2010}

We will use context trees to estimate transition probabilities, that is, next-symbol probabilities conditionally on the entire past. We also wish to select context trees that represent the dependency structure of the process. As in previous work, our main challenge is to select a statistically sound model from a huge and complex set of candidates.

Our approach builds upon the estimation method proposed in \cite{belloni2017approximate}. That method relies on building the full suffix tree for the observed sample, then pruning the tree by removing nodes that would improve the bias-variance tradeoff. A procedure reminiscent of Lepskii's adaptation method \cite{Lepskii1991} is used, which requires the specification of a node-specific regularization parameters. As in \cite{belloni2017approximate}, these parameters provide a measure of the ``variance"~in the estimation of transitions from a specific suffix. However, the choice of parameters in \cite{belloni2017approximate} was based on martingale concentration inequalities. While theoretically valid, these inequalities can be quite conservative: they lose constants and do not account correctly for correlations between sample events. 

Our main contribution to the VLMC estimation problem is to give a multiplier-bootstrap-based method for choosing the regularization parameters. This choice is less conservative than the approach via concentration, and thus leads to better estimation properties. The main ingredient needed to prove validity our our bootstrap is the the high-dimensional martingale CLT developed in Section \ref{SEC:MAX}.

We note that the connection between VLMCs, Markov chain order estimation, and martingale concentration has been known for some time. A classical paper by Csisz\'{a}r \cite{Csiszar2002} analyses MDL estimators for finite-order Markov chains via a finite-sample Law of the Iterated Logarithm for martingales. Since then, other estimators for Markov chains and VLMCs were analyzed via martingale techniques \cite{GarivierLeonardi2010,CsiszarTalata2006,belloni2017approximate,oliveira2015context}. We believe our martingale CLT and bootstrap may lead to tighter analyses of these estimators. In particular, our bootstrap may be useful for parameter tuning in these settings. 

\subsection{Organization}
The paper is organized as follows. In Section \ref{SEC:NOTATION} below we collect the notation we will use throughout the paper. Section \ref{SEC:CLTs} states our central limit theorems. Namely, a (quantitative) CLT for smooth functions of multidimensional martingales, performance bonds for a Gaussian bootstrap to approximate the expectation of smooth functions. Section \ref{SEC:MAX} derives a CLT for the maximum of a $d$-dimensional martingale, which can provide meaningful bounds even if $d\gg n$. This section also contains our new anti-concentration results. Finally, we give a construction of simultaneous confidence intervals for many means under (the martingale) dependence in Section \ref{SEC:MM} where the dimension of the martingale can exceed the sample size. Section \ref{SECCONTEXT} has our main application which is tuning penalty parameters for the estimation of VLMC models.

\subsection{Notation}\label{SEC:NOTATION}

In this work, $(M_t,\sF_t)_{t=0}^n$ is a martingale with $M_t\in\R^d$, $M_0=0$ and $\Ex{\|M_t\|^2}<+\infty$ for each $t\in [n]$ and some norm $\|\cdot\|$. We let $\xi_t:=M_{t}-M_{t-1}$ denote the increments of the martingale. We let $\Exp{t-1}{\cdot}:=\Ex{\cdot\mid\sF_{t-1}}$ denote conditional expectation with respect to $\sF_{t-1}$, so that $\Exp{t-1}{\xi_t}=0$ because of the martingale property. We define the random matrices
\[\Sigma_t:=\Exp{t-1}{\xi_t\xi_t'}\mbox{ and }V_n:=\sum_{t=1}^n\Sigma_t.\]

We assume that $N_1,\dots,N_n$ are i.i.d. standard $d$-dimensional random vectors defined in the same probability space as the martingale, but independent of $\sF_n$. We consider the sequence
\[\eta_t:=\Sigma_t^{1/2}\,N_t,\,1\leq t\leq n.\]
(Here the square root can be any random matrix that is $\sF_{t-1}$-measurable and also satisfies $\Sigma_ t^{1/2}\,(\Sigma^{1/2}_t)' = \Sigma_t$)).
Our goal will be to show that $M_n$ is close in distribution to $\sum_{t=1}^n\eta_t$. Note that the first two conditional moments of $\eta_t$ match those of $\xi_t$.
\[\Exp{t-1}{\eta_t} = 0\mbox{ and } \Exp{t-1}{\eta_t\eta_t'} = \Sigma_t.\]

We also assume that there exists a deterministic set $\sS\subset\R^d$ such that $\frac{\xi_t}{\|\xi_t\|}, \frac{\eta_t}{\|\eta_t\|}\in\sS.$ Our approximation bounds will be stated in terms of $\sS$.

The symbol ``$\preceq$" denote the positive semidefinite partial order on $d\times d$ symmetric matrices. For a norm $\|\cdot \|$ on matrices we have the associated dual norm:
\[\|B\|_*:=\sup_{A\in\R^{d\times d}\,:\,\|A\|=1}\,|\tr(BA)|,\]
so that $|\tr(AB)|\leq \|A\|\,\|B\|_*$ always.

Given $p\in [1,\infty]$, we use the symbol $\|\cdot\|_p$ to denote both the $\ell_p$ norm on vectors and the entrywise $\ell_p$ norm on matrices.

\section{CLT and bootstrap for multivariate martingales}\label{SEC:CLTs}

In this section we prove an useful approximation result for expectations $\Ex{\varphi(M_n)}$ of martingales $M_n$ that are allowed to have large dimension. We will keep our notation on matrices and norms and make an assumption on the quadratic variation $V_n$ of the martingale sequence. In what follows let $\delta_n$ be fixed.

\begin{assumption}\label{assump:A}
Let $(M_t,\sF_t)_{t=0}^n$ be a martingale as in Section \ref{SEC:NOTATION}. Assume there exists deterministic symmetric $d\times d$ matrices $V$, and $V_\delta\succeq 0$ such that:
\[\Pr{V_n\preceq V+V_\delta} \geq  1 - \alpha \ \ \mbox{and} \ \ \Delta_n := \|V_\delta\| + \Ex{\|V - V_n\|} \leq \delta_n.\]
\end{assumption}

Assumption \ref{assump:A} allows for many applications. The assumption that $\Delta_n \leq \delta_n \to 0$ trivially covers the case in which $V_n$ is deterministic. Importantly, it is a finite sample condition (i.e., $V$ can change with $n$) that covers modern applications in which the dimension $d$ of the martingale increases with the sample size $n$.

\begin{remark}Our assumption requires that $V_n$ is typically close to a deterministic $V$ and also that $V-V_n\preceq V_\delta$ for a  small~matrix $V_\delta$. It might be possible to avoid this condition using the predictable projection techniques of \cite{Grama1996,Rackauskas1995}. Unfortunately, we have not been able to do this effectively with the entrywise $\infty$ norm that we need to study the maximum.\end{remark}

We state next a first central limit theorem result for smooth functions of multivariate martingales. It will provide good results provided $\Delta_n$, as defined in Assumption \ref{assump:A}, is small. We consider a function $\varphi:\R^d\to\R$ which is three times differentiable. We let:
%
\[c_0:=\sup_{x,x'\in\R^d}|\varphi(x) - \varphi(x')|, \ \ \mbox{and} \ \ c_k:=\sup_{x\in \R^d,w\in\sS}\,|\nabla^k\varphi(x)\,(w^{\otimes k})|, \ \ k=2,3.\]

Theorem \ref{thm:lindeberg} below is based on a Lindeberg argument and characterizes the approximation errors based on $\alpha$, $\Delta_n$ and higher moments from the relevant increments.

\begin{theorem}[Quantitative CLT for Multidimensional Martingales]\label{thm:lindeberg} Suppose that Assumption \ref{assump:A} holds and let $\eta_1,\ldots,\eta_n$ be independent vectors in $\mathbb{R}^{d}$ with $\eta_t \sim N(0, \Sigma_t)$. Then for any function $\varphi:\R^d\to \R$ that is three times differentiable, we have:
\[|\Ex{\varphi(M_n)} - \Ex{\varphi(N(0,V))}|\leq c_0\alpha + 2c_2\Delta_n + c_3\sum_{t=1}^n\Ex{\|\xi_t\|^3 + \|\eta_t\|^3}.\]\end{theorem}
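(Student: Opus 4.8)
The plan is to run a classical Lindeberg swapping argument, but in the martingale setting and with care to track the dimension-dependent constants $c_0, c_2, c_3$. First I would introduce the hybrid sums $S_k := \sum_{t=1}^{k}\eta_t + \sum_{t=k+1}^{n}\xi_t$, so that $S_n = M_n$ and $S_0 = \sum_{t=1}^n \eta_t$, and telescope: $\varphi(M_n) - \varphi(\sum_t \eta_t) = \sum_{k=1}^n \big(\varphi(S_k) - \varphi(S_{k-1})\big)$. Writing $W_k := \sum_{t=1}^{k-1}\eta_t + \sum_{t=k+1}^{n}\xi_t$ (the common part), each difference is $\varphi(W_k + \eta_k) - \varphi(W_k + \xi_k)$. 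The key point is that $W_k$ is \emph{not} independent of $\xi_k$ or $\eta_k$ — unlike the i.i.d.\ case — so I must condition on $\sF_{k-1}$ rather than on a product structure. Taylor-expanding $\varphi$ around $W_k$ to third order in both terms and taking $\Exp{k-1}{\cdot}$: the zeroth-order terms cancel, the first-order terms cancel because $\Exp{k-1}{\eta_k} = \Exp{k-1}{\xi_k} = 0$ \emph{and} $W_k$ is $\sF_{k-1}$-measurable (this is where the martingale property is used — the future increments $\xi_{k+1},\dots,\xi_n$ and $\eta_{k+1},\dots,\eta_n$ inside $W_k$ do not cause trouble because we only need $\Exp{k-1}{}$ of the linear term in $\eta_k,\xi_k$, and those average to zero conditionally), the second-order terms match because $\Exp{k-1}{\eta_k \eta_k'} = \Exp{k-1}{\xi_k \xi_k'} = \Sigma_k$, and the third-order remainders are bounded by $c_3(\|\xi_k\|^3 + \|\eta_k\|^3)$ using the directional bound on $\nabla^3\varphi$ along $\sS$ (since $\xi_k/\|\xi_k\|, \eta_k/\|\eta_k\| \in \sS$). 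Summing over $k$ and taking total expectation yields $c_3 \sum_t \Ex{\|\xi_t\|^3 + \|\eta_t\|^3}$.

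Wait — there is a subtlety with the second-order cancellation that needs more care, and I expect this to be the main obstacle. When I Taylor-expand $\varphi(W_k + \eta_k)$, the second-order term is $\frac12 \nabla^2\varphi(W_k)(\eta_k^{\otimes 2})$, and I want $\Exp{k-1}{\nabla^2\varphi(W_k)(\eta_k^{\otimes 2})} = \tr(\nabla^2\varphi(W_k)\,\Sigma_k)$; but $W_k$ depends on $\eta_k$? No — $W_k = \sum_{t<k}\eta_t + \sum_{t>k}\xi_t$ does \emph{not} contain $\eta_k$, good. However $\nabla^2\varphi(W_k)$ is $\sigma(\sF_{k-1}, \xi_{k+1},\dots,\xi_n, N_1,\dots,N_{k-1})$-measurable, and conditionally on $\sF_{k-1}$ the vector $\eta_k = \Sigma_k^{1/2} N_k$ is independent of $N_1,\dots,N_{k-1}$; it is \emph{also} independent of $\xi_{k+1},\dots,\xi_n$ given $\sF_{k-1}$? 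Not obviously — the future increments can depend on $\xi_k$ in a complicated way, and there is no such coupling issue for $\eta_k$ since the $N_t$ are independent of $\sF_n$, but for the $\xi_k$ term the analogous step needs $\Exp{k-1}{\nabla^2\varphi(W_k)(\xi_k^{\otimes 2})}$. Here $W_k$ and $\xi_k$ are both only $\sF_k$-measurable and may be genuinely correlated. The resolution is to \emph{not} expand around $W_k$ for both, but rather to re-index the telescoping so that at step $k$ we swap $\xi_k \to \eta_k$ in $S'_{k} := \sum_{t<k}\eta_t + \eta_k + \sum_{t>k}\xi_t$ versus $S'_{k-1}$, expanding both $\varphi(W_k + \xi_k)$ and $\varphi(W_k + \eta_k)$ around the \emph{same} base point $W_k$ — and then the correlation of $W_k$ with $\xi_k$ is handled by noting that the second moment identity $\Exp{k-1}{(\nabla^2\varphi(W_k) - \nabla^2\varphi(\widetilde W_k))(\xi_k^{\otimes 2})}$, where $\widetilde W_k$ replaces $\xi_k$-dependent future terms... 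Actually the cleanest fix, which I would adopt, is to expand around $U_k := \sum_{t<k}\eta_t + \sum_{t>k}\xi_t$ but first replace the future block $\sum_{t>k}\xi_t$ — this is getting circular; the standard device is: the second-order term contributes $\frac12\Exp{k-1}{\nabla^2\varphi(U_k)(\xi_k^{\otimes 2} - \eta_k^{\otimes 2})}$, and since $\eta_k$ is conditionally independent of \emph{everything} in $U_k$ (the $N$'s are independent of $\sF_n$ and of each other), this equals $\frac12\Exp{k-1}{\nabla^2\varphi(U_k)\,\xi_k^{\otimes 2}} - \frac12\Exp{k-1}{\tr(\nabla^2\varphi(U_k)\Sigma_k)}$; then to handle the first term I introduce $U_k^{(0)} := \sum_{t<k}\eta_t + \Exp{k-1}{\sum_{t>k}\xi_t}$... no. The actual standard resolution: absorb the mismatch into the $c_2 \Delta_n$ term. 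That is, I do \emph{not} expect the second-order terms to cancel exactly; instead, $\sum_k \frac12\Exp{k-1}{\nabla^2\varphi(U_k)(\xi_k^{\otimes 2} - \eta_k^{\otimes 2})}$, summed and with $\Sigma_k$ substituted where $\eta_k$ allows it, telescopes into a term involving $\sum_k \Sigma_k - V = V_n - V$ plus $V_\delta$, controlled in $\|\cdot\|$-norm by $\Exp{}{\|V_n - V\|} + \|V_\delta\| \le \Delta_n$, and paired with $\|\nabla^2\varphi(\cdot)\| \le$ something bounded by $c_2$ via the dual norm inequality $|\tr(AB)| \le \|A\|\|B\|_*$ and the definition of $c_2$ as a sup over $\sS$. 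This is exactly where the $2c_2\Delta_n$ comes from, and where Assumption~\ref{assump:A} (both the high-probability PSD bound, contributing via $\alpha$ and $c_0$ on the bad event, and the expected-norm bound) is consumed.

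To finish, on the event $\{V_n \preceq V + V_\delta\}$ I would use a Gaussian-interpolation / Stein-type identity (or a second Lindeberg swap between $N(0,V_n)$ and $N(0,V)$, which is exact since both are Gaussian and one can interpolate $\sqrt{1-s}\,Z_{V} + \sqrt{s}\,Z_{V_n}$, Itô/Gaussian integration by parts giving a term $\frac12\int_0^1 \Ex{\nabla^2\varphi(\cdot)(V_n - V)}\,ds$) to compare $\Ex{\varphi(\sum_t \eta_t)} = \Ex{\varphi(N(0,V_n))}$ with $\Ex{\varphi(N(0,V))}$, picking up another $c_2 \Exp{}{\|V_n - V\|}$ contribution and, on the complementary event of probability $\le \alpha$, the crude bound $c_0\alpha$. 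Collecting: $c_0\alpha$ from the bad event, $2c_2\Delta_n$ from the two second-moment mismatches (martingale swap and Gaussian-to-Gaussian swap), and $c_3\sum_t\Ex{\|\xi_t\|^3 + \|\eta_t\|^3}$ from the Taylor remainders, which is exactly the claimed bound. The main obstacle, as flagged, is bookkeeping the second-order terms in the martingale swap so that they telescope correctly into $V_n - V$ and $V_\delta$ despite the conditional correlation between $U_k$ and $\xi_k$ — the trick being that only the $\eta_k$-side of each second-order term can be replaced by its conditional covariance $\Sigma_k$ for free, while the $\xi_k$-side must be summed \emph{before} taking expectations so that $\sum_k \xi_k^{\otimes 2}$-type quantities can be compared with $V_n = \sum_k \Sigma_k$ through a further (inner) telescoping or directly via the PSD ordering in Assumption~\ref{assump:A}.
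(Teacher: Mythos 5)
There is a genuine gap, and it sits exactly where you flagged it. Your hybrid sums put the \emph{Gaussians in the past and the martingale increments in the future}, so your base point $W_k=\sum_{t<k}\eta_t+\sum_{t>k}\xi_t$ is not $\sF_{k-1}$-measurable and is genuinely correlated with $\xi_k$ (the future increments $\xi_{k+1},\dots,\xi_n$ may depend on $\xi_k$). Consequently your claim that ``the first-order terms cancel because $W_k$ is $\sF_{k-1}$-measurable'' is false in your own setup: $\Exp{k-1}{\langle\nabla\varphi(W_k),\xi_k\rangle}$ need not vanish, and the same correlation breaks the second-order cancellation, as you noticed. Your proposed repair --- ``absorb the mismatch into the $2c_2\Delta_n$ term'' --- is not justified: the leftover quantities are of the form $\Exp{k-1}{\nabla^2\varphi(W_k)\bigl(\xi_k^{\otimes 2}\bigr)}-\Exp{k-1}{\tr\bigl(\nabla^2\varphi(W_k)\,\Sigma_k\bigr)}$ (plus the uncancelled gradient terms), i.e.\ conditional correlations between a Hessian evaluated at a point depending on \emph{future} increments and $\xi_k\xi_k'$. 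They do not telescope into $V_n-V$ or $V_\delta$, because the Hessian changes with $k$, and Assumption \ref{assump:A} says nothing about such cross-correlations. So the central mechanism of the swap is missing, which you essentially concede (``this is getting circular'').

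The paper resolves this with two devices you do not have. First, the hybrid sums are oriented the other way: the base point is $X^o_k=\sum_{t<k}\xi_t+\sum_{t>k}\eta_t$, with realized (hence $\sF_{k-1}$-measurable) martingale increments in the past and Gaussians in the future. Conditioning on $\sG_{k,n}:=\sigma(\sF_n\vee\sigma(N_k))$ and integrating out $N_{k+1},\dots,N_n$, the tail $\sum_{t>k}\eta_t$ is conditionally $N(0,V-V_k)$, so $\Ex{\nabla\varphi(X^o_k)\mid\sG_{k,n}}$ and $\Ex{\nabla^2\varphi(X^o_k)\mid\sG_{k,n}}$ are deterministic functions of the $\sF_{k-1}$-measurable quantities $\sum_{t<k}\xi_t$ and $V-V_k$; the martingale property and $\Exp{k-1}{\xi_k\xi_k'}=\Exp{k-1}{\eta_k\eta_k'}$ then kill the first- and second-order terms \emph{exactly}, leaving only the $c_3$ remainders. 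Second, this argument needs the total quadratic variation to be deterministic (so that $V-V_k$ is $\sF_{k-1}$-measurable), which Assumption \ref{assump:A} does not give directly; the paper manufactures it by stopping the martingale at $\tau=\inf\{m:\ V_{m+1}\not\preceq V+V_\delta\}$ (costing $c_0\alpha$, plus a $2c_0$-or-$c_2$ term) and appending one extra increment $(V+V_\delta-\tilde V_n)^{1/2}N_{n+1}$ so that the modified martingale has quadratic variation identically $V+V_\delta$. The $2c_2\Delta_n$ term then arises from the Gaussian perturbation lemma (comparing $\tilde M_n$ with $\tilde M_{n+1}$, and $N(0,V+V_\delta)$ with $N(0,V)$), not from leftover Lindeberg second-order terms. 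Your final paragraph does contain the Gaussian-interpolation and bad-event ingredients, and your third-order bookkeeping is fine, but without the reversed orientation and the stopping-plus-completion construction the core swap does not go through.
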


Theorem \ref{thm:lindeberg} is a nonasymptotic result. When the sample size $n$ grows, we will see that it allows for the dimension of the martingale to grow with $n$. How fast $d$ can grow while still allowing for a good approximation depends on the specific application, in particular on $c_3$ and the higher moments of the data. We will see that bounds on $c_3$ can be obtained by dual norms as well. Theorem \ref{thm:clt:max:martingale} below exploits an specific functional to allow potentially $d\gg n$.

The result above suggest we can compute functionals of $M_n$ based on simulation from a Gaussian random variable. However that requires the knowledge of the covariance matrix $V$, which might not be directly available. In many settings the quadratic variation $V_n$ or an estimator of it is computable instead. Therefore, based on a Gaussian perturbation lemma, we can derive the following corollary.

\begin{corollary}[Gaussian bootstrap for smooth functions]\label{cor:bootstrap} Suppose that Assumption \ref{assump:A} holds and adopt the notation in Section \ref{SEC:NOTATION}. Let $W_n$ be a random $d\times d$ positive semidefinite matrix that possibly depends on all the other randomness in our problem. Let $N(0,W_n)$ denote a $d$-dimensional random vector that is Gaussian with mean $0$ and covariance $W_n$, conditionally on all the other randomness in our problem. Then for any function three times differentiable function $\varphi:\R^d\to\R$ we have
\begin{eqnarray*}|\Ex{\varphi\left(M_n\right)} - \Ex{\varphi(N(0,W_n))\mid W_n}| &\leq& c_0 \alpha_n + 2c_2  \Delta_n + c_2\|V-W_n\| \\ & & + c_3 \sum_{t=1}^n\Ex{\|\xi_t\|^3 + \|\eta_t\|^3}.\end{eqnarray*}\end{corollary}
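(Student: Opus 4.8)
The plan is to reduce everything to Theorem~\ref{thm:lindeberg} together with a Gaussian perturbation estimate. First I would apply Theorem~\ref{thm:lindeberg} to obtain
\[
\left|\Ex{\varphi(M_n)} - \Ex{\varphi(N(0,V))}\right| \leq c_0\alpha + 2c_2\Delta_n + c_3\sum_{t=1}^n\Ex{\|\xi_t\|^3 + \|\eta_t\|^3},
\]
which already accounts for the first, second, and fourth terms on the right-hand side of the claimed bound. It then remains to compare $\Ex{\varphi(N(0,V))}$ with $\Ex{\varphi(N(0,W_n)) \mid W_n}$, and the point is that the difference is controlled by $c_2\|V - W_n\|$.

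The key step is a Gaussian interpolation (Slepian-type) argument at the level of the two covariances. Conditionally on all the other randomness (so that $W_n$ is treated as a fixed positive semidefinite matrix), let $G_0 \sim N(0, V)$ and $G_1 \sim N(0, W_n)$ be independent, and consider the path $G(s) := \sqrt{s}\,G_1 + \sqrt{1-s}\,G_0$ for $s\in[0,1]$, so that $G(s) \sim N(0,\, sW_n + (1-s)V)$. Differentiating $s\mapsto \Ex{\varphi(G(s))}$ and using the standard identity that for a Gaussian family with covariance $\Sigma(s)$ one has $\frac{d}{ds}\Ex{\varphi(G(s))} = \tfrac12\sum_{i,j}\dot\Sigma_{ij}(s)\,\Ex{\partial_i\partial_j\varphi(G(s))}$, with $\dot\Sigma(s) = W_n - V$, yields
\[
\left|\Ex{\varphi(N(0,W_n))\mid W_n} - \Ex{\varphi(N(0,V))}\right| \;=\; \left|\tfrac12\int_0^1 \Ex{\nabla^2\varphi(G(s))\,(W_n - V)}\,ds\right|.
\]
To bound the integrand by $c_2\|V-W_n\|$ I would write $W_n - V$ in a form compatible with the definition $c_2 = \sup_{x,w\in\sS}|\nabla^2\varphi(x)(w^{\otimes 2})|$ together with the dual-norm inequality $|\tr(AB)| \leq \|A\|\,\|B\|_*$; concretely, $\nabla^2\varphi(G(s))$ acts as a matrix whose relevant norm is bounded via $c_2$ on the directions in $\sS$, so $|\Ex{\nabla^2\varphi(G(s))(W_n-V)}| \leq (2c_2)\|W_n - V\|$ up to the factor matching the $\tfrac12$, giving exactly the $c_2\|V - W_n\|$ term. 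Finally I would take expectations over $W_n$ and combine with the Theorem~\ref{thm:lindeberg} bound via the triangle inequality.

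The main obstacle is the last comparison: matching the second-derivative bound to $c_2$ requires care because $c_2$ is defined only over directions $w\in\sS$, not over arbitrary test matrices, so the perturbation $W_n - V$ must be handled through the appropriate (dual) norm pairing rather than a crude operator-norm bound — this is presumably why the statement carries $c_2\|V-W_n\|$ with the particular norm $\|\cdot\|$ fixed in Section~\ref{SEC:NOTATION}. A secondary technical point is the conditioning: since $W_n$ may depend on $\sF_n$ and on the auxiliary Gaussians, one must be careful that the Gaussian perturbation lemma is applied conditionally on $W_n$ and that Fubini is justified when integrating back over $W_n$, which is routine given the finite moment assumptions.
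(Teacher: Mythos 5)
Your proposal is correct and follows essentially the same route as the paper: apply Theorem~\ref{thm:lindeberg} to compare $\Ex{\varphi(M_n)}$ with $\Ex{\varphi(N(0,V))}$, then control $|\Ex{\varphi(N(0,V))}-\Ex{\varphi(N(0,W_n))\mid W_n}|$ by $c_2\|V-W_n\|$ via a Gaussian perturbation/interpolation argument applied conditionally on $W_n$, and conclude by the triangle inequality. The only cosmetic difference is that you derive the perturbation identity through the continuous path $\sqrt{s}\,G_1+\sqrt{1-s}\,G_0$ and the standard covariance-derivative formula, whereas the paper's Lemma~\ref{lem:perturbV} obtains the same identity by a discretized $m$-step interpolation and a limit $m\to\infty$; the dual-norm pairing issue you flag is handled (implicitly) the same way in the paper.
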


In some situations, the estimator $W_n$ can be taken to be $\sum_{t=1}^n \xi_t\xi_t'$. In that case it is possible to simulate a Gaussian random variable $N(0,V_n)$ via a Gaussian multiplier bootstrap procedure, conditional on $(\xi_t)_{t=1}^n$ we have
$$ M_n^* = \sum_{t=1}^n \xi_t g_t$$
where the $(g_t)$ are i.i.d. standard Gaussian random variables generated independently from the data.

\section{A Central Limit Theorem for the Maximum of a Multivariate Martingale}\label{SEC:MAX}

We highlight a set of applications in which we are interested on confidence regions. In those applications, the relevant functional is no longer smooth and can include indicator functions. The next result builds upon Theorem \ref{thm:lindeberg} and recent Gaussian coupling for the maximum of the sum of independent random vectors established in \cite{chernozhukov2012gaussian,chernozhukov2012comparison,chernozhukov2013gaussian,chernozhukov2015noncenteredprocesses}.
In that case a smooth approximation of the maximum $F_\beta(X)=\beta^{-1}\log( \sum_{j=1}^d \exp(\beta X_j))$ and a smooth approximation of indicator functions will have well behaved derivatives. The result relies on specific choices of norms, namely $\|\cdot\|=\|\cdot\|_\infty$ and $\|\cdot\|_* = \|\cdot\|_1$.

\begin{theorem}\label{thm:clt:max:martingale}
Suppose that Assumption \ref{assump:A} holds. Let $H$ be a $d$-dimensional Gaussian vector with mean $0$ and covariance matrix $V$. Also let $W_n$ denote a random $d\times d$ positive semidefinite random matrix (potentially depending on other randomness in the problem), and consider a random vector $H_n$ that $d$-dimensional Gaussian with mean $0$ and covariance $W_n$, conditionally on all the other random variables. Let $$Z:=\max_{j\in[d]}\sum_{t=1}^n\xi_{tj},\;\; \widetilde Z :=  \max_{j\in[d]}H_j,\;\;\mbox{and}\;\;\widetilde Z_n:=\max_{j\in[d]}H_{n,j}.$$ Define:
\[a_n(\delta):=2\alpha +  \frac{C\log d}{\bar\delta^2} \Delta_n +\frac{C\log^2d}{\bar \delta^3} \sum_{t=1}^n \Ex{\|\xi_t\|_\infty^3 + \|\eta_t\|_\infty^3},\]
where $C$ is a universal positive constant. Then for every $\bar\delta>0$ and every Borel subset of $\mathbb{R}$ we have:
$$\Pr{ Z \in A} \leq \Pr{ \widetilde Z \in A^{C\bar \delta}} + a_n(\bar \delta)$$
and
$$\Pr{ Z \in A} \leq \Pr{ \left. \widetilde Z_n \in A^{C\bar \delta}\right| W_n}+ \frac{C\log d}{\bar\delta^2} \|V-V_n\|_\infty + a_n(\bar \delta).$$
\end{theorem}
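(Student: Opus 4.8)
The plan is to deduce \thmref{clt:max:martingale} from the smooth-function CLT \thmref{lindeberg} (and its bootstrap version \corref{bootstrap}) by the standard smoothing device: approximate the non-smooth functional $x\mapsto \mathbf{1}\{\max_j x_j\in A\}$ by a composition of the smooth-max $F_\beta(x)=\beta^{-1}\log\sum_j e^{\beta x_j}$ with a smooth surrogate of the indicator of $A$. Concretely, first I would fix a small parameter $\bar\delta>0$ and choose $\beta=\beta(\bar\delta,d)$ of order $\log d/\bar\delta$ so that $0\le F_\beta(x)-\max_j x_j\le \beta^{-1}\log d\le \bar\delta$; this already moves $Z$ and $F_\beta$ within $\bar\delta$. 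Then pick a $C^3$ function $h:\R\to[0,1]$ that is $1$ on $A$ and $0$ outside $A^{\bar\delta}$ (or the reverse inclusion for the other direction of the bound), with $\|h'\|_\infty=O(\bar\delta^{-1})$, $\|h''\|_\infty=O(\bar\delta^{-2})$, $\|h'''\|_\infty=O(\bar\delta^{-3})$, obtained by mollifying the indicator at scale $\bar\delta$. Set $\varphi:=h\circ F_\beta$.

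Next I would compute the constants $c_0,c_2,c_3$ for this $\varphi$ with the choice $\|\cdot\|=\|\cdot\|_\infty$, $\|\cdot\|_*=\|\cdot\|_1$, and $\sS$ as in the notation section. Since $h$ is bounded in $[0,1]$, $c_0\le 1$. The derivative bounds come from the chain rule together with the well-known estimates for $F_\beta$: the gradient $\nabla F_\beta$ lies in the simplex (so $\|\nabla F_\beta\|_1=1$), and the higher derivatives of $F_\beta$ contracted against vectors $w$ with $\|w\|_\infty\le 1$ are controlled by powers of $\beta$ — schematically $|\nabla^k F_\beta(x)(w^{\otimes k})|\lesssim \beta^{k-1}$. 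Combining these with Faà di Bruno / the chain rule yields $c_2\lesssim \beta^2\|h''\|_\infty+\beta\|h'\|_\infty\lesssim (\log d)^2/\bar\delta^4$ at worst, but the sharper accounting used in the Chernozhukov--Chetverikov--Kato line of work gives the stated $c_2=O(\log d/\bar\delta^2)$ and $c_3=O(\log^2 d/\bar\delta^3)$; I would carry out that bookkeeping, using that the "bad" high-order terms in $\nabla^k F_\beta$ are themselves simplex-weighted and hence contribute only lower powers of $\beta$ when contracted against bounded $w$. Plugging these into \thmref{lindeberg} gives $|\Ex{\varphi(M_n)}-\Ex{\varphi(H)}|\le c_0\alpha+2c_2\Delta_n+c_3\sum_t\Ex{\|\xi_t\|_\infty^3+\|\eta_t\|_\infty^3}$, which is exactly $2\alpha+ (C\log d/\bar\delta^2)\Delta_n+(C\log^2 d/\bar\delta^3)\sum_t\Ex{\|\xi_t\|_\infty^3+\|\eta_t\|_\infty^3}$ up to the factor-of-two slack in $\alpha$ absorbed by the smoothing. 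Chaining the three approximations — $Z\approx F_\beta$ within $\bar\delta$, $\varphi=h\circ F_\beta$ sandwiching $\mathbf 1\{F_\beta\in A^{\pm\bar\delta}\}$, and the CLT bound — and finally anti-concentration of $\widetilde Z$ to convert the $O(\bar\delta)$ Minkowski enlargements into the $A^{C\bar\delta}$ form, yields the first displayed inequality. For the second inequality I would repeat the argument with \corref{bootstrap} in place of \thmref{lindeberg}, taking $W_n$ as given and bounding the extra term $c_2\|V-W_n\|$; when $W_n$ is compared through $V_n$ this produces the $(C\log d/\bar\delta^2)\|V-V_n\|_\infty$ term, and the conditional Gaussian $H_n=N(0,W_n)$ replaces $H$.

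The main obstacle is the derivative bookkeeping for $\varphi=h\circ F_\beta$ in the $\ell_\infty/\ell_1$ duality: getting $c_2$ and $c_3$ down to $O(\log d/\bar\delta^2)$ and $O(\log^2 d/\bar\delta^3)$ rather than the naive $\beta^2$- and $\beta^3$-type bounds requires exploiting that (i) $\nabla F_\beta$ is a probability vector, so contractions of $\nabla^k F_\beta$ against $w^{\otimes k}$ with $\|w\|_\infty\le 1$ involve cancellation and leave only $O(\beta^{k-1})$, and (ii) the cross terms in the chain rule pair $\beta^{j-1}$-type factors from $F_\beta$ with $\|h^{(i)}\|_\infty=O(\bar\delta^{-i})$ factors so that the total power of $\log d$ matches the order of differentiation while the total power of $\bar\delta$ is one more than that. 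A secondary, purely bureaucratic, obstacle is handling the one-sided set inclusions correctly (choosing $h$ to dominate or be dominated by the indicator as needed) and invoking a Gaussian anti-concentration estimate — available from the results of \secref{MAX} or from \cite{chernozhukov2012comparison} — to pass from $\bar\delta$-enlargements measured in the smoothing scale to the $C\bar\delta$-enlargements in the statement; this last step is where one must be slightly careful if the minimum variance of $H$ is small, but since the enlargement is by an additive $O(\bar\delta)$ and not a Lebesgue-measure bound, the crude anti-concentration suffices here.
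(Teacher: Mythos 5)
Your proposal follows essentially the same route as the paper's proof: approximate the max by $F_\beta$ with $\beta=\log d/\bar\delta$, compose with a $C^3$ surrogate of the indicator whose $k$-th derivative is $O(\bar\delta^{-k})$, bound $c_0\le 2$, $c_2\le C\log d/\bar\delta^2$, $c_3\le C\log^2 d/\bar\delta^3$ by exploiting that $\nabla F_\beta$ is a probability vector (the paper cites the explicit derivative formulae of Chernozhukov--Chetverikov--Kato for this bookkeeping), and then apply Theorem~\ref{thm:lindeberg} for the first bound and Corollary~\ref{cor:bootstrap} for the second. The only deviation is your final anti-concentration step, which is both unnecessary and unavailable here (the minimum variance of $H$ is not bounded below under Assumption~\ref{assump:A}): the sandwich $1_A(v)\le\varphi(v)\le 1_{A^{5\bar\delta}}(v)$ already delivers the $A^{C\bar\delta}$ enlargement appearing in the statement, so that step should simply be dropped.
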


Combined with Stressen's theorem, Theorem \ref{thm:clt:max:martingale} provides a coupling result for the maximum of many martingale sequences and the maximum of a Gaussian process up to small approximation errors.

\subsection{Anti-concentration for the Maximum of Gaussian Vectors}

As discussed in the literature for the independent case, the maximum of a high-dimensional vector tends to concentrate. However, anti-concentration results have been established to show that the maximum of a Gaussian vector cannot concentrate too fast around any point, see e.g. \cite{chernozhukov2012gaussian} and \cite{chernozhukov2015noncenteredprocesses}. In what follows we derive another anti-concentration result for centered Gaussian vectors with components that have different variances. In contrast to the literature, the bound depends only on the maximum variance (in opposition to the minimum variance) but we incur an additional log factor. This seems of independent interest and relevant in our application to context trees in Section \ref{SECCONTEXT} since it is possible to have components with arbitrary small variances (even zero). A notable exception to the literature is a very recent result in \cite{lopes2018bootstrapping} for the anti-concentration of the maximum of a Gaussian vector in which the variances of components, while potentially decaying to zero, have a suitable structure. Below we state our anti-concentration result.

\begin{theorem}\label{thm:max:anticoncentration}
Let $X \in \mathbb{R}^d$ be a zero-mean Gaussian random vector, $d\geq 2$. Define the maximum variance $\bar \sigma^2 = \max_{j\in[d]} \Exp{}{X_j^2}>0$. Then, for any $t \geq \bar \sigma\Phi^{-1}(0.95)$ and $\epsilon < \frac{1}{8}t$, we have
$$   \Pr{ \left|\max_{j\in[d]} X_j - t \right|\leq \epsilon } \leq 2\epsilon \frac{\sqrt{\log d}}{\bar \sigma}\{\sqrt{2\log (2d)}+3\}.$$
\end{theorem}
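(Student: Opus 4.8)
The plan is to reduce the anti-concentration statement to a one-dimensional estimate. First I would introduce the CDF $F(s) := \Pr{\max_{j\in[d]} X_j \le s}$ of the maximum, so that the quantity we must bound is $F(t+\epsilon) - F(t-\epsilon)$. The natural strategy is to show that $F$ has a density $f$ on the relevant range and that $f(s) \le c\,\sqrt{\log d}/\bar\sigma$ there, uniformly for $s$ in an interval around $t$ of radius $\epsilon < t/8$; then $F(t+\epsilon)-F(t-\epsilon) \le 2\epsilon\,\sup f$, which is the asserted bound with $c = \sqrt{2\log(2d)}+3$. To produce such a pointwise density bound I would use the union-bound / Mills-ratio representation: writing $\sigma_j^2 = \Ex{X_j^2} \le \bar\sigma^2$, we have $F(s) \ge \prod_j \Pr{X_j \le s} \ge 1 - \sum_j \Pr{X_j > s}$, and more to the point the derivative of $F$ can be controlled by $\sum_j$ of the Gaussian density of $X_j$ at $s$ times a conditional-probability factor that is at most $1$. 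That gives $f(s) \le \sum_{j=1}^d \frac{1}{\sigma_j\sqrt{2\pi}} e^{-s^2/(2\sigma_j^2)}$.

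The heart of the argument is then to bound $\sum_{j=1}^d \sigma_j^{-1} e^{-s^2/(2\sigma_j^2)}$ uniformly over $s$ near $t$, given only $\sigma_j \le \bar\sigma$ and the hypothesis $t \ge \bar\sigma\,\Phi^{-1}(0.95)$. The function $\sigma \mapsto \sigma^{-1} e^{-s^2/(2\sigma^2)}$ is increasing on $(0, s]$, so each term is at most $\bar\sigma^{-1} e^{-s^2/(2\bar\sigma^2)}$ as long as $s \ge \bar\sigma$ — which holds because $\Phi^{-1}(0.95) > 1$ and, since $\epsilon < t/8$, every $s \in [t-\epsilon, t+\epsilon]$ satisfies $s \ge \tfrac{7}{8} t \ge \tfrac{7}{8}\bar\sigma\Phi^{-1}(0.95) > \bar\sigma$. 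Hence $f(s) \le d\,\bar\sigma^{-1}(2\pi)^{-1/2} e^{-s^2/(2\bar\sigma^2)}$, and it remains to convert the factor $d\, e^{-s^2/(2\bar\sigma^2)}$ into the claimed $\sqrt{\log d}$-type expression. Here I would use that $s \ge \tfrac{7}{8}t$ and that $t$ is of order at least $\bar\sigma\sqrt{\log d}$ in the regime where the bound is nontrivial: more carefully, one writes $d\,e^{-s^2/(2\bar\sigma^2)} = e^{\log d - s^2/(2\bar\sigma^2)}$ and balances the exponent. The cleanest route is probably to split at the threshold $t_\star := \bar\sigma\sqrt{2\log(2d)}$: for $t \ge t_\star$ the exponential wins and $d\,e^{-s^2/(2\bar\sigma^2)}$ is already $\le$ (small constant), absorbed into the $3$; for $\bar\sigma\Phi^{-1}(0.95) \le t \le t_\star$ one has $t/\bar\sigma \le \sqrt{2\log(2d)}$, and then the factor $t$ itself (hidden in the comparison $\epsilon < t/8$ is not used here, but $t \le \bar\sigma\sqrt{2\log(2d)}$ feeds the $\sqrt{2\log(2d)}$ term) together with a direct estimate of $d\,e^{-s^2/(2\bar\sigma^2)} \le d\,e^{-(49/128)t^2/\bar\sigma^2}$ yields a bound of the form $(\text{const})\sqrt{\log d}/\bar\sigma$. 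Matching the constants so that the final right-hand side reads exactly $2\epsilon\,\frac{\sqrt{\log d}}{\bar\sigma}\{\sqrt{2\log(2d)}+3\}$ is the routine-but-fiddly part.

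I expect the main obstacle to be the rigorous justification of the density bound $f(s) \le \sum_j \sigma_j^{-1}(2\pi)^{-1/2} e^{-s^2/(2\sigma_j^2)}$ in full generality — the components $X_j$ are arbitrarily correlated and some $\sigma_j$ could be $0$ (in which case that coordinate is identically zero and simply drops out, but one must say so). The honest way to get it is to note that $F(s) = \Pr{\forall j: X_j \le s}$ and, for each $j$ with $\sigma_j > 0$, $F(s+h) - F(s) \le \Pr{s < X_j \le s+h} \le h\cdot \sigma_j^{-1}(2\pi)^{-1/2} e^{-s^2/(2\sigma_j^2)}$ only after we know the max's distribution is dominated near $s$ by some single coordinate crossing; a clean derivation writes $\{\max_j X_j \in (s, s+h]\} \subseteq \bigcup_j \{X_j \in (s,s+h]\}\cap\{X_i \le s+h\ \forall i\}$ is not quite it, so the right inclusion is $\{\max_j X_j \in (s,s+h]\} = \{\max_j X_j \le s+h\}\setminus\{\max_j X_j \le s\} \subseteq \bigcup_j\{X_j \in (s, s+h]\}$, whence $F(s+h)-F(s) \le \sum_j \Pr{X_j \in (s,s+h]}$ and dividing by $h$ and letting $h\downarrow 0$ gives the density bound by Gaussian tail/density monotonicity (the Gaussian density of $X_j$ is decreasing on $s > 0$, so $\Pr{X_j \in (s,s+h]} \le h\cdot\phi_{\sigma_j}(s)$). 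Once this step is in place the rest is elementary calculus with the monotonicity of $\sigma \mapsto \sigma^{-1}e^{-s^2/(2\sigma^2)}$ and the choice of cases described above.
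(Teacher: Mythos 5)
There is a genuine gap, and it is exactly where you flagged ``balancing the exponent'' as routine-but-fiddly: it is not routine, it is false in the relevant regime. Your reduction to the density bound $f(s)\leq\sum_j(2\pi\sigma_j^2)^{-1/2}e^{-s^2/(2\sigma_j^2)}$ is fine (the inclusion $\{\max_j X_j\in(s,s+h]\}\subseteq\bigcup_j\{X_j\in(s,s+h]\}$ does the job), but after you majorize every term by the maximum-variance one you are left with $d\,\bar\sigma^{-1}(2\pi)^{-1/2}e^{-s^2/(2\bar\sigma^2)}$, and the theorem only assumes $t\geq\bar\sigma\Phi^{-1}(0.95)\approx1.64\,\bar\sigma$. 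Take $X_1,\dots,X_d$ with all $\sigma_j=\bar\sigma$ and $t=\bar\sigma\Phi^{-1}(0.95)$: then for $s\in[t-\epsilon,t+\epsilon]$ your bound is of order $\epsilon\,d/\bar\sigma$ (numerically about $0.28\,\epsilon d/\bar\sigma$), while the target is of order $\epsilon\log d/\bar\sigma$. No case split on $t$ can rescue this, because in your low-$t$ branch ($\bar\sigma\Phi^{-1}(0.95)\leq t\leq\bar\sigma\sqrt{2\log(2d)}$) the quantity $d\,e^{-(49/128)t^2/\bar\sigma^2}$ is $\Theta(d)$ at the left endpoint, not $O(\sqrt{\log d})$; and even in your high-$t$ branch the $7/8$ loss leaves $d\,(2d)^{-49/64}\sim d^{15/64}$ at the threshold, which is not an absorbable constant. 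The true probability is indeed tiny in this example, but only because of the factor $\Pr{X_i\leq t\ \forall i\neq j}\approx(0.95)^{d-1}$ that the union bound discards; once you have thrown that away, the statement cannot be recovered. So a pure union-bound/marginal-density argument over all $d$ coordinates cannot prove the theorem.

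The paper's proof keeps the union bound only where it is harmless and uses a genuinely different tool for the rest. It splits the coordinates at the variance level $\bar\sigma_0:=\bar\sigma/\sqrt{\log d}$: for $j$ with $\sigma_j\leq\bar\sigma_0$, the point $t-\epsilon\geq\sqrt{2}\,\bar\sigma$ is so many of \emph{their} standard deviations out that the marginal density is at most $(\sqrt{2\pi}\bar\sigma_0)^{-1}e^{-\bar\sigma^2/\bar\sigma_0^2}=(\sqrt{2\pi}\bar\sigma_0)^{-1}d^{-1}$, so the union bound over these coordinates costs only $\epsilon/\bar\sigma_0=\epsilon\sqrt{\log d}/\bar\sigma$; for the remaining coordinates, whose variances lie in $(\bar\sigma_0^2,\bar\sigma^2]$, it applies the known Gaussian anti-concentration result (Lemma 4.3 of \cite{chernozhukov2015noncenteredprocesses}), which handles arbitrary correlations without a union bound and whose constant scales like the reciprocal of the \emph{minimum} variance in that subset, i.e.\ at most $\sqrt{\log d}/\bar\sigma$ after the split. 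Your proposal is missing this second ingredient (or some substitute that uses the joint law, e.g.\ a Nazarov-type inequality, rather than marginals), and without it the argument does not close.
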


The anti-concentration result in Theorem \ref{thm:max:anticoncentration} explicitly makes use of a ``large value" of $t$ instead of considering $t \in \mathbb{R}$. This is useful in applications where we are concerned with a high quantile. This allows us to remove the impact of components with small variance, as they are unlikely to realize the maximum near a large value of $t$.

The following corollary combines the result of Theorem \ref{thm:max:anticoncentration} with Lemma 4.3 in \cite{chernozhukov2015noncenteredprocesses}.

\begin{corollary}\label{cor:anti-concentration}
Let $X \in \mathbb{R}^d$ be a zero-mean Gaussian random vector, $d\geq 2$. Let $\bar \sigma^2 = \max_{j\in[d]}{\rm Var}(X_j)$, $\underline{\sigma}^2 = \min_{j\in[d]}{\rm Var}(X_j)$. Then for every $\varepsilon \in (0, \frac{1}{4}\bar\sigma)$
$$ \sup_{t \geq 2\bar \sigma} \Pr{ \left| \max_{j\in[d]} X_j - t\right| \leq \epsilon } \leq  2\epsilon \{\sqrt{2\log (2d)}+3\} \min\left\{ \frac{1}{\underline{\sigma}}, \ \  \frac{\sqrt{\log d}}{\bar \sigma}  \right\}$$
\end{corollary}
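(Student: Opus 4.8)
The plan is to combine two anti-concentration bounds for $\max_j X_j$ and take the better of the two. The first bound, valid whenever $\underline\sigma>0$, is the classical one: Lemma 4.3 in \cite{chernozhukov2015noncenteredprocesses} gives, for a centered Gaussian vector with $\min_j \mathrm{Var}(X_j)=\underline\sigma^2$ and $\max_j \mathrm{Var}(X_j)=\bar\sigma^2$, an anti-concentration rate of the form $\Pr{|\max_j X_j - t|\le\varepsilon}\le \frac{C\varepsilon}{\underline\sigma}(\sqrt{\log d}+\dots)$ uniformly over $t\in\R$; I would invoke it to get the $1/\underline\sigma$ branch, making sure the universal constants there are absorbed into the stated $\{\sqrt{2\log(2d)}+3\}$ factor (this is the routine bookkeeping step — one checks that the version of Lemma 4.3 being cited indeed produces a constant no larger than what is claimed, possibly after enlarging the log term slightly). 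The second bound is exactly Theorem \ref{thm:max:anticoncentration}: for every $t\ge\bar\sigma\Phi^{-1}(0.95)$ and every $\varepsilon<\tfrac18 t$ we have $\Pr{|\max_j X_j-t|\le\varepsilon}\le 2\varepsilon\frac{\sqrt{\log d}}{\bar\sigma}\{\sqrt{2\log(2d)}+3\}$, which gives the $\sqrt{\log d}/\bar\sigma$ branch.

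Next I would reconcile the hypotheses so that both bounds apply on the range asserted in the corollary, namely $t\ge 2\bar\sigma$ and $\varepsilon\in(0,\tfrac14\bar\sigma)$. Since $\Phi^{-1}(0.95)<1.65<2$, the condition $t\ge 2\bar\sigma$ implies $t\ge\bar\sigma\Phi^{-1}(0.95)$, so the threshold hypothesis of Theorem \ref{thm:max:anticoncentration} is met. For the perturbation-size hypothesis, $\varepsilon<\tfrac14\bar\sigma\le\tfrac18 t$ whenever $t\ge 2\bar\sigma$, so $\varepsilon<\tfrac18 t$ holds automatically; hence Theorem \ref{thm:max:anticoncentration} is applicable for every such pair $(t,\varepsilon)$. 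Lemma 4.3 of \cite{chernozhukov2015noncenteredprocesses} holds for all $t\in\R$ (or at worst needs $\varepsilon$ small relative to $\bar\sigma$, which is guaranteed by $\varepsilon<\tfrac14\bar\sigma$), so it too applies throughout the stated range. Taking the minimum of the two right-hand sides and then the supremum over $t\ge 2\bar\sigma$ yields
\[
\sup_{t\ge 2\bar\sigma}\Pr{\Bigl|\max_{j\in[d]}X_j-t\Bigr|\le\varepsilon}\ \le\ 2\varepsilon\{\sqrt{2\log(2d)}+3\}\min\Bigl\{\tfrac{1}{\underline\sigma},\ \tfrac{\sqrt{\log d}}{\bar\sigma}\Bigr\},
\]
which is the claim.

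The only genuine obstacle is constant-tracking: one must verify that the constant emerging from Lemma 4.3 of \cite{chernozhukov2015noncenteredprocesses} is really $\le\{\sqrt{2\log(2d)}+3\}$ (up to the factor $2\varepsilon/\underline\sigma$) rather than some larger universal $C$, since otherwise the clean uniform statement with the explicit bracket would need that bracket inflated. I expect this to go through because Lemma 4.3 is typically stated with an explicit $\sqrt{2\log d}$-type leading term plus a bounded additive correction, and the bracket $\sqrt{2\log(2d)}+3$ is already slightly more generous than $\sqrt{2\log d}$; if a borderline small-$d$ case ($d=2,3$) caused trouble, one would note $d\ge 2$ makes $\sqrt{2\log(2d)}+3\ge\sqrt{2\log 4}+3>4$, which comfortably dominates the small universal constants. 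Everything else is a direct application of the two cited results and elementary inequalities for $\Phi^{-1}(0.95)$.
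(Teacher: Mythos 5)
Your proposal matches the paper's own argument: the corollary is obtained precisely by combining Theorem \ref{thm:max:anticoncentration} (the $\sqrt{\log d}/\bar\sigma$ branch) with Lemma 4.3 of \cite{chernozhukov2015noncenteredprocesses} (the $1/\underline{\sigma}$ branch) and taking the minimum, and your hypothesis checks ($t\geq 2\bar\sigma$ implies $t\geq \bar\sigma\Phi^{-1}(0.95)$ and $\epsilon<\frac{1}{4}\bar\sigma\leq\frac{1}{8}t$) are exactly the reconciliation needed. The constant-tracking caveat you raise is legitimate but harmless, since the bracket $\{\sqrt{2\log(2d)}+3\}$ is indeed generous enough to absorb the additive constant in the cited lemma.
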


Corollary \ref{cor:anti-concentration} provides a more complete picture. Moreover, it provides us with a transition between the results of the literature and ours as $\underline{\sigma}$ goes to zero. These results were shown for centered Gaussian random vectors. However, Lemma 4.3 in \cite{chernozhukov2015noncenteredprocesses} also holds for non-centered Gaussian random vectors. We note that further restrictions on $t$ based on the centering can lead to a anti-concentration bounds that could be useful.


\subsection{Simultaneous Confidence Intervals for Many Means}\label{SEC:MM}

A basic application of the results developed here is the construction of simultaneous confidence intervals for means of high-dimensional martingales. Recently this problem has attracted a lot of attention under independence, see \cite{belloni2018high} for a survey, and under other time dependence structures \cite{chernozhukov2013testing,zhang2014bootstrapping,zhang2017gaussian}. For example, as discussed in \cite{pauly2011weighted}, in analyzing comparative gains in financial applications. Let $r_{kj}$ denote the comparative log-gains (or comparative log-returns) of the $j$th asset from time $k-1$ to $k$. To model such applications we consider the model
$$ r_{kj} = \mu_j + X_{kj}$$
where we observe $r_{kj}\in\R$, for $k\in[n],j\in[d]$. for each $j\in [d]$ we have that $X_{kj}, k\in [n],$ is a martingale difference and $\mu_j \in \mathbb{R}$ is an unknown value of interest (i.e. mean comparative log-gains of the $j$th asset).

Since we observe $r_{kj}, j\in[d], k\in [n]$, we can directly compute \begin{equation}\label{def:MM}\hat \mu_j = \frac{1}{n}\sum_{k=1}^n r_{kj} \ \ \ \mbox{and} \ \  \hat Z_{kj} = \hat \sigma_j^{-1}(r_{kj} - \hat \mu_j) \end{equation} where $\hat\sigma_j^2 = \frac{1}{n}\sum_{k=1}^n (r_{kj} - \hat \mu_j)^2$. For convenience, in what follows let $Z_{kj} =  \sigma_j^{-1}X_{kj}$.

We are interested in constructing confidence regions for $\mu \in \mathbb{d}$ where $d$ is large. As it has been recently discussed in the literature, when considering many means, i.e. $d$ grows with $n$, the shape of the confidence regions considered plays a key role in the analysis and validity of the construction. In what follows we provide two set of results for simultaneous inference.

The first pertains to simultaneous confidence bands of the form
\begin{equation}\label{def:SCB}
\hat\mu_j - \widehat{\rm cv}(\delta)\frac{\hat\sigma_j}{\sqrt{n}} \leq \mu_j \leq \hat\mu_j + \widehat{\rm cv}(\delta)\frac{\hat\sigma_j}{\sqrt{n}} \ \ \ j\in [d]
\end{equation}
where $ \widehat{\rm cv}(\delta)$ is chosen for the relation (\ref{def:SCB}) to hold with probability $1-\delta - o(1)$. We will set
$$  \widehat{\rm cv}(\delta) = \mbox{conditional $(1-\delta)$-quantile of} \ \ \widehat{Z}^* = \max_{j\in[d]} \left| \frac{1}{\sqrt{n}}\sum_{k=1}^n g_k \hat Z_{kj}\right| \ \ \mbox{given} \ (\hat Z_k)_{k=1}^n $$
where $(g_k)_{k\in[d]}$ are i.i.d. standard Gaussian random variables independent of the data.

\begin{assumption}\label{assum:MMinfty}
Let $\delta_n$ and $\psi_n$ be fixed sequences going to zero with $\psi_n = o(\delta)$. Then:\\
(i) for some $\rho \in (0,1/2)$ we have  $\frac{1}{n}\sum_{k=1}^n\Exp{}{\|Z_k\|_\infty^3}\leq n^\rho$;\\
(ii)  $V = \frac{1}{n}\sum_{k=1}^n \Exp{}{Z_{k}Z_{k}'}$ and (the random matrix) $V_n = \frac{1}{n}\sum_{k=1}^n \Exp{k-1}{Z_{k}Z_{k}'}$ satisfy $$ \max_{j,\ell \in [d]}|V_{j\ell}| \leq C \ \ \mbox{and} \ \ \Exp{}{\max_{j,\ell\in[d]}|V_{n,j\ell}-V_{j\ell}|}\leq \delta_n/\log^2(dn),$$ and  with probability $1-\psi_n$, we have $$ V_n \preceq \{1+ \delta_n/\log^2(d)\}  V, \  \  \ \mbox{and}\ \ \ \max_{j,\ell\in [d]} \left| V_{n,k\ell} - V_{k\ell}\right|  \leq \delta_n/\log^2(dn)$$
(iii)   with probability $1-\psi_n$ $$\max_{j,\ell\in [d]} \left| \frac{1}{n}\sum_{k=1}^n Z_{kj}Z_{k\ell} - \Exp{k-1}{Z_{kj}Z_{k\ell}} \right|  \vee \max_{j\in [d]} \left| \frac{1}{n}\sum_{k=1}^n (\hat Z_{kj}-Z_{k j})^2 \right| \leq \delta_n^2/\log^2(dn)$$
(iv) $\psi_n\leq \delta^2\delta_n^{1/2}$, $\delta_n^{1/20} \leq \delta$,  $\log^{3.5} d \leq \delta_n^2 n^{1/2-\rho}$, and $\min_{j\in[d]} \sigma_j \geq c$.
\end{assumption}

Assumption \ref{assum:MMinfty}(i) imposes moment conditions. Assumption \ref{assum:MMinfty} (ii) imposes some restrictions on the correlation structure in order. Indeed it allows for block diagonal matrices as long as the block size does not grow too fast.  Assumption \ref{assum:MMinfty}(iii) is a weak assumption. The first quantity can be bounded via many known martingale inequalities while the second, although it is context dependent, follows by using standard plug-in rule to compute $\hat Z_{kj}$. Finally, Assumption \ref{assum:MMinfty}(iv) imposes a trade off between how fast the number of components $d$ can grow relative to the sample size $n$. It also imposes that the

\begin{theorem}\label{thm:MMinfty}
Suppose that Assumption \ref{assum:MMinfty} holds and $d\geq 2$. Then we have
$$ \left|\Pr{\hat\mu_j - \widehat{\rm cv}(\delta)\frac{\hat\sigma_j}{\sqrt{n}} \leq \mu_j \leq \hat\mu_j + \widehat{\rm cv}(\delta)\frac{\hat\sigma_j}{\sqrt{n}} \ \ \ \mbox{for all} \ j\in [d] } - (1-\delta)\right| \leq o(\delta) $$
\end{theorem}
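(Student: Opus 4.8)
The plan is to reduce the coverage statement to a maximum-type Gaussian comparison via the multiplier bootstrap, applying Theorem~\ref{thm:clt:max:martingale} to the martingale $M_n=\frac1{\sqrt n}\sum_{k=1}^n (Z_{k1},\dots,Z_{kd})'$ (and its negative, to handle the two-sided band), with $V$ and $V_n$ as in Assumption~\ref{assum:MMinfty}(ii). The key observation is that the event in the theorem is, up to the studentization error, the event $\{\,|M_{n,j}|\le \widehat{\rm cv}(\delta)\ \forall j\,\}$, i.e. $\{\max_{j}|M_{n,j}|\le \widehat{\rm cv}(\delta)\}$; once I can approximate the law of $\max_j|M_{n,j}|$ by the law of $\max_j|H_j|$ with $H\sim N(0,V)$, and then by the conditional law of $\widehat Z^*$, the choice of $\widehat{\rm cv}(\delta)$ as the conditional $(1-\delta)$-quantile of $\widehat Z^*$ gives coverage $1-\delta$ up to the accumulated errors.

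First, I would control the studentization and plug-in errors: replace $\hat\sigma_j$ by $\sigma_j$ and $r_{kj}-\hat\mu_j$ by $X_{kj}$, so that the band becomes $\{\max_j |M_{n,j}|\le \widehat{\rm cv}(\delta)\}$ up to an additive perturbation of the radius of order $\delta_n/\log^2(dn)$ uniformly in $j$ (using Assumption~\ref{assum:MMinfty}(iii) and $\min_j\sigma_j\ge c$); an anti-concentration bound for $\max_j|H_j|$ near the relevant quantile level then shows this perturbation changes the probability by $o(\delta)$. Second, I would apply Theorem~\ref{thm:clt:max:martingale} (with $\|\cdot\|=\|\cdot\|_\infty$) to $\pm M_n$ to get, for a suitable $\bar\delta$, that $\Pr{\max_j|M_{n,j}|\in A}\le \Pr{\max_j|H_j|\in A^{C\bar\delta}}+a_n(\bar\delta)$ and the bootstrap version with $W_n=\frac1n\sum_k \hat Z_k\hat Z_k'$; here Assumption~\ref{assum:MMinfty}(i) bounds $\frac1n\sum_k\Ex{\|Z_k\|_\infty^3}\le n^\rho$, so $\sum_t\Ex{\|\xi_t\|_\infty^3}=n^{-1/2}\cdot\frac1n\sum_k\Ex{\|Z_k\|_\infty^3}\le n^{\rho-1/2}$, and the Gaussian part $\sum_t\Ex{\|\eta_t\|_\infty^3}$ is bounded the same way up to $\log^{3/2}d$ factors (standard Gaussian maximal inequalities, using $\max_{j\ell}|V_{j\ell}|\le C$). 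With $\bar\delta$ chosen so that $\log^2 d\cdot\bar\delta^{-3}\cdot n^{\rho-1/2}$ and $\log d\cdot\bar\delta^{-2}\cdot\delta_n/\log^2(dn)$ are both $o(\delta)$ --- which is exactly what Assumption~\ref{assum:MMinfty}(iv) guarantees, e.g. $\bar\delta\asymp\delta_n^{1/2}$ makes $a_n(\bar\delta)=o(\delta)$ using $\log^{3.5}d\le\delta_n^2 n^{1/2-\rho}$ and $\psi_n\le\delta^2\delta_n^{1/2}$ --- the smoothing loss of size $C\bar\delta$ in the $A\mapsto A^{C\bar\delta}$ enlargement is again absorbed by anti-concentration for $\max_j|H_j|$.

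Third, I would chain the two inequalities through the bootstrap quantile. Writing $q:=\widehat{\rm cv}(\delta)$, I have $\Pr{\max_j|M_{n,j}|\le q}\ge \Pr{\max_j|H_j|\le q-C\bar\delta}-a_n(\bar\delta)$ and $\Pr{\max_j|M_{n,j}|\le q}\le \Pr{\max_j|H_j|\le q+C\bar\delta}+a_n(\bar\delta)$, and on the high-probability event from Assumption~\ref{assum:MMinfty}(ii)--(iii) where $W_n$ is close to $V$ in entrywise $\infty$-norm, a Gaussian comparison (Chernozhukov--Chetverikov--Kato, or Corollary~\ref{cor:bootstrap} applied to a smoothed indicator together with Theorem~\ref{thm:max:anticoncentration}) shows $\Pr{\max_j|H_j|\le q\pm C\bar\delta}$ differs from $\Pr{\max_j|H_{n,j}|\le q\mid W_n}$ by $o(\delta)$; since by definition $\Pr{\widehat Z^*\le q\mid \hat Z}=1-\delta$ and $\widehat Z^*=\max_j|H_{n,j}|$ in distribution conditionally, this pins the coverage to $1-\delta\pm o(\delta)$. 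The main obstacle is the bookkeeping of error terms: one must verify that the single choice $\bar\delta\asymp\delta_n^{1/2}$ (or a nearby power of $\delta_n$) simultaneously makes the Lindeberg error $a_n(\bar\delta)$, the quadratic-variation slack $\|V-V_n\|_\infty$ term, the anti-concentration penalty $\bar\delta\sqrt{\log d}/\underline\sigma$-type factor, and the studentization error all $o(\delta)$, which forces the precise inequalities among $\rho$, $\delta_n$, $\psi_n$, $\delta$, and $\log d$ collected in Assumption~\ref{assum:MMinfty}(iv); the anti-concentration step is delicate because, although $\min_j\sigma_j\ge c$ bounds $\underline\sigma$ away from zero here, one still needs the enlargement radius $C\bar\delta$ to be negligible against the density of $\max_j|H_j|$ at the level $q$, and $q$ itself must be shown to be of order $\sqrt{\log d}$ so that Theorem~\ref{thm:max:anticoncentration}'s hypothesis $t\ge\bar\sigma\Phi^{-1}(0.95)$ and the $\epsilon<t/8$ range apply.
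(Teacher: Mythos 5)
Your overall strategy is the same as the paper's: control the studentization error using Assumption \ref{assum:MMinfty}(iii) and $\min_j\sigma_j\geq c$, apply Theorem \ref{thm:clt:max:martingale} with the $\ell_\infty$ norm (verifying Assumption \ref{assump:A} with $\alpha=\psi_n$ and $V_\delta$ proportional to $V$, and using Assumption \ref{assum:MMinfty}(i) for the third-moment terms), and use the anti-concentration result near the quantile level to absorb the $C\bar\delta$ enlargement; the bookkeeping through Assumption \ref{assum:MMinfty}(iv) is also in the right spirit.

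However, the third step as written has a genuine gap. You set $q:=\widehat{\rm cv}(\delta)$ and then invoke $\Pr{\max_j|M_{n,j}|\le q}\ge \Pr{\max_j|H_j|\le q-C\bar\delta}-a_n(\bar\delta)$ and its companion. Theorem \ref{thm:clt:max:martingale} is stated for a \emph{fixed} Borel set $A$; here $A=(-\infty,q]$ is random and depends on the same data as $M_n$, so the inequality does not follow from the theorem (the dependence between $q$ and $M_n$ is exactly what could break it), and the quantity $\Pr{\max_j|H_j|\le q\pm C\bar\delta}$ is itself ambiguous since $H$ and $q$ live on different sources of randomness. The missing ingredient is a quantile-comparison step: one must first show that, with probability at least $1-\wp_n$, the data-driven quantile $\widehat{\rm cv}(\delta)$ (after the studentization factor) dominates a slightly shifted \emph{deterministic} quantile ${\rm cv}^o(\delta+\vartheta_n)$ of $Z=\max_j|G_j|$, with $\vartheta_n=o(\delta)$, and symmetrically for the other direction. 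This is what the paper's Step 2 does: it bounds $|\widehat Z^*-Z^*|$ conditionally on the data via Gaussian maximal and Borell--Sudakov--Tsirel'son inequalities (using Assumption \ref{assum:MMinfty}(iii)), bounds the covariance discrepancy between $Z^*$ and $Z$ and applies the Gaussian perturbation Lemma \ref{lem:perturbV} together with a conditional Strassen coupling, and then converts the resulting coupling radius $r_n$ into a quantile shift $\vartheta_n$ via Theorem \ref{thm:max:anticoncentration} (this is precisely where one needs ${\rm cv}^o(\delta+\vartheta_n)\asymp\sqrt{\log(d/\delta)}$ and the lower bound on the level, which you correctly flag). Only after replacing $q$ by the deterministic ${\rm cv}^o(\delta+\vartheta_n)$, at the cost of the event $\{\widehat{\rm cv}(\delta)(1-\epsilon_n)<{\rm cv}^o(\delta+\vartheta_n)\}$ of probability at most $\wp_n$, can Theorem \ref{thm:clt:max:martingale} be applied at a fixed threshold. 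Your conditional comparison of $\max_j|H_{n,j}|$ given $W_n$ at the (conditionally fixed) point $q$ is fine; it is the martingale-to-Gaussian step with a random threshold that needs this extra reduction, and without it the chain does not close.
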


Theorem \ref{thm:MMinfty} establishes the validity of the simultaneous confidence intervals. We note that in addition to the dependence it allows for $d\gg n$. This is an interesting feature to handle modern high-dimensional applications. The proof builds upon Theorem \ref{thm:clt:max:martingale} and anti-concentration arguments used recently in the literature. By exploiting the structure of the maximum, we can reduce drastically the requirements on $d$ relative to $n$ from polynomial to logarithmic.

%
%
%
%

\section{Tuning many parameters in VLMC estimators}\label{SECCONTEXT}

In this section we leverage the tools developed in Section \ref{SEC:CLTs} to the estimation of a context tree associated with a discrete stationary stochastic process. Specifically, we use Theorem \ref{thm:clt:max:martingale} to develop data-driven tuning parameters that will be theoretically valid. In this example, the dimension of the martingale grows with the sample size to better balance bias and variance of the estimator.

\subsection{Preliminaries}

We follow \cite{belloni2017approximate}. $(X_k)_{k\in\Z}$ is a stationary stochastic process taking values in the finite alphabet $A$. For $k\in\N$, $A^{-1}_{-k}$ is the set of all strings  of length $k$ over $A$, which we index by the numbers $-k$, $-k+1$, $\dots$, $-1$ from left to right.

$A^*=\cup_{k=0}^{-\infty}A^{-1}_{-k}$ is the set of finite strings over $A$, where $A^{-1}_{-0}$ consists solely of the empty string $e$. For $w\in A^*$, $|w|$ is the length of $A$. This notation extends to the set $A^{-1}_{-\infty}$ of infinite strings indexed by the negative integers.

Given a finite of infinite string $w$ over $A$ that is indexed by the negative integers, and a number $0\leq k\leq |w|$, we let $w^{-1}_{-k}$ denote the suffix of length $k$ of $w$. We define a partial order on strings by saying that $w\preceq w'$ if $w$ is a suffix of $w'$. The empty string is the unique minimum element of this partial order.

Given $w\in A^*$, we let $\pi(w):=\Pr{X^{-1}_{-|w|}=w}$. The support $\supp\subset A^*$ is the set of $w\in A^*$ with $\pi(w)>0$. For $w\in \supp$ and $a\in A$, we define the transition probability:
\[p(a|w) = \frac{\pi(wa)}{\pi(w)} = \Pr{X_0=a\mid X^{-1}_{-|w|}=w}.\]
We extend this notation to $x=x^{-1}_{-\infty}\in A^{-1}_{-\infty}$ via the usual measure-theoretic defintion of probabilities. We define the continuity rate at $w\in \supp$ as:
\begin{equation}\gamma(w):=\sup\limits_{w',w''\in \supp, w',w''\succeq w;a\in A}|p(a|w') - p(a|w'')|.\end{equation}

We will be especially interested in processes with continuous transition probabilities, for which $\gamma(x^{-1}_{-k})\to 0$ as $k\to +\infty$ for every $x^{-1}_{-\infty}\in A^{-1}_{-\infty}$. Our goal will be to obtain estimates of the transition probabilities that adapt to the continuity rates.

The {\em parent} ${\rm par}(w)$ of $w\in A^*\backslash\{e\}$ is the suffix of $w$ of length $|w|-1$. A child of $w$ is a $u\in A^*$ with ${\rm par}(u)=w$.

A nonempty subset $T\subset A^*$ is a tree if for all $w\in T\backslash\{e\}$, ${\rm par}(w)\in T$. A node $w\in T$ is a leaf of $T$ if none of its children belong to $T$. $T$ is complete if all nodes are either leaves or have exactly $|A|$ children. $\Leaves{T}$ is the set of leaves of $T$.

For a finite tree $T$ and $x\in A^{-1}_{-\infty}$, we let $T(x)$ denote the largest suffix of $x$ in $T$. $T$ is said to be a {\em context tree} for the process $(X_k)_{k\in\Z}$ is said to have context tree $T$ if $p(a|X^{-1}_{-\infty}) = p(a|T(X^{-1}_{-\infty}))$ almost surely.


\subsection{A context-tree-based estimator}

We describe next the estimation procedure proposed in \cite{belloni2017approximate} with a few (trivial) modifications.

We assume we are given a sample $X_1^n=(X_1,\dots,X_n)$ of size $n$ of the process $(X_k)_{k\in \Z}$. Fix a parameter $h_*\in\N$. For a finite string $w\in A^*$ with $|w|\leq h_*$, and $a\in A$, define:
\[N_{n}(w):= \sum_{k=h_*}^{n}\Ind{\{X^{k}_{k-|w|+1}=w\}},\]
and
\[N_{n-1}(w):= \sum_{a\in A}N_{n}(wa).\]
Note that the sum starts with $k=h_*$, so that $N_{n-1}(x) = \sum_{b\in A}N_{n-1}(bx)$ for all $a\in A$ (otherwise it could be that $x$ occurs at positions $1,\dots,|x|$).

The {\em empirical tree} of the sample is the set $E_n\subset A^*$ of all $w\in A^*$ with $|w|\leq h_*$ and $N_{n-1}(w)>0$. For such $w$ we set:
\begin{equation}\label{def:np}\widehat{p}_n(a|w):= \frac{N_{n}(wa)}{N_{n-1}(w)} \ \ \mbox{and} \ \ \overline{p}_n(a|w):= \frac{\sum_{k=h_*}^{n}p(a|X^{k-1}_{1})\,\Ind{\{X^{k-1}_{k-|w|}=w\}}}{N_{n-1}(w)}.\end{equation}
Note that $\hat p_n$ is the usual nonparamentric estimate for the transition probabilities. Quantity $\bar p_n$ is sample-dependent, but is guaranteed to satisfy:
\[\forall w'\succeq w\,:\,|\bar p_n(a|w) - p(a|w')|\leq \gamma(w).\]

The estimator from \cite{belloni2017approximate} is defined as follows. Fix a constant $c>1$. For each $w\in E_n$ one has defined a parameter ${\rm cf}(w)$. Intuitively, ${\rm cf}(w)$ is measure of the deviations $\max_{a\in A}|\hat p_n(a|w') - \bar p_n(a|w)|$. For the time being, all we need is that computed from the sample $w$ and is increasing in the partial order $\preceq$.

Now define, for each such $w$, a number ${\sf CanRmv}(w)\in \{0,1\}$ such that ${\sf CanRmv}(e)=0$ for the empty string and, for $w\in E_n\backslash\{e\}$,
\[{\sf CanRmv}(w)=1\Leftrightarrow \left\{\begin{array}{l}\forall w',w''\in E_n\mbox{ with }w'\succeq w, w''\succeq {\rm par}(w):\\ \max_{a\in A}|\hat p_n(a|w') - \hat p_n(a|w'')|\leq c\,({\rm cf}(w') + {\rm cf}(w'')).\end{array}\right.\]

We define a subset $\hat T_n\subset E_n$ as follows:
\[\hat T_n:= \{w\in E_n\,:\, {\sf CanRmv}(w)=0\}.\]
This set is a tree because if ${\sf CanRmv}(w)=0$, then ${\sf CanRmv}({\rm par}(w))=0$ as well.
consisting of all $w\in T_n$ that satisfy ${\sf CanRmv}(w)=0$ is a tree. We take this tree to be our estimator of the context tree of the process. Our estimate for the transition probabilities is defined in terms of $\hat T_n$.
\[\hat P_n(a|x):= \hat p_n(a|\hat T_n(x))\,\,(x\in A^{-1}_{-\infty},a\in A).\]

The next Theorem is essentially contained in \cite[Lemmas A.1 and A.2]{belloni2017approximate}. Notice that it is a deterministic statement.

\begin{theorem}Assume the following event holds.
\begin{equation}\label{eq:defGood}{\rm Good}_*:=\{\forall (w,a)\in E_n\times A\,:\, |\bar p_n(a\mid w) - \hat p_n(a|w)|\leq {\rm cf}(w)\}\end{equation}
Then, $\hat T_n$ is contained in the true context tree $T^*$ of the process. Moreover, for almost all realizations $x\in A^{-1}_{-\infty}$ of $X_{-\infty}^{-1}$, and all $a\in A$:
\[|\hat P_n(a|x) - p(a|x)|\leq \inf_{w\in E_n,w\preceq x}\left( \frac{2c+2}{c-1}\,\gamma(w) + (1+2c)\,{\rm cf}(w)\right).\]
\end{theorem}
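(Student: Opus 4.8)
The plan is to prove the two assertions separately, both as consequences of the event ${\rm Good}_*$ and the definition of the pruning rule.

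\textbf{First assertion: $\hat T_n\subseteq T^*$.} I would argue by contradiction. Suppose some $w\in\hat T_n$ is \emph{not} a suffix appearing in $T^*$; since $\hat T_n$ is a tree and contains $e\in T^*$, we may take $w$ minimal with this property, so ${\rm par}(w)\in T^*$ while $w\notin T^*$. Being in $T^*$ means (because $T^*$ is a context tree) that for every descendant $w'\succeq {\rm par}(w)$ the transition probabilities $p(\cdot\mid w')$ all coincide with $p(\cdot\mid {\rm par}(w))$; in particular $p(a\mid w')=p(a\mid w'')$ for all $w'\succeq w$, $w''\succeq {\rm par}(w)$, $a\in A$. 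Combining this with the defining property of $\bar p_n$, namely $|\bar p_n(a\mid u)-p(a\mid u')|\le\gamma(u)$ for $u'\succeq u$, and with $\gamma(w)$-type bounds, one sees that on ${\rm Good}_*$ the quantity $\max_a|\hat p_n(a\mid w')-\hat p_n(a\mid w'')|$ is controlled by ${\rm cf}(w')+{\rm cf}(w'')$ up to the continuity rates. The key is that inside $T^*$ the continuity rate vanishes at the relevant level (or is dominated by ${\rm cf}$), so the inequality $\max_a|\hat p_n(a\mid w')-\hat p_n(a\mid w'')|\le c({\rm cf}(w')+{\rm cf}(w''))$ holds for all the pairs quantified in the definition of ${\sf CanRmv}$; hence ${\sf CanRmv}(w)=1$, so $w\notin\hat T_n$, a contradiction. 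This is the argument already packaged in \cite[Lemma A.1]{belloni2017approximate}, so I would mainly transcribe it with the present notation.

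\textbf{Second assertion: the oracle inequality.} Fix a realization $x$ and fix any $w\in E_n$ with $w\preceq x$; it suffices to bound $|\hat P_n(a\mid x)-p(a\mid x)|$ by $\frac{2c+2}{c-1}\gamma(w)+(1+2c){\rm cf}(w)$, then take the infimum over such $w$. Write $\hat w:=\hat T_n(x)$, the node of $\hat T_n$ used by the estimator, so $\hat P_n(a\mid x)=\hat p_n(a\mid\hat w)$. There are two regimes. If $\hat w\succeq w$, then by the triangle inequality $|\hat p_n(a\mid\hat w)-p(a\mid x)|\le |\hat p_n(a\mid\hat w)-\bar p_n(a\mid\hat w)|+|\bar p_n(a\mid\hat w)-p(a\mid x)|\le {\rm cf}(\hat w)+\gamma(\hat w)$ on ${\rm Good}_*$, and since ${\rm cf}$ and $\gamma$ are monotone increasing in $\preceq$ with $\hat w\succeq w$, a little care is needed — actually ${\rm cf}(\hat w)\ge{\rm cf}(w)$ the wrong way, so this easy case must instead be handled by walking up the tree. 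The real content is the other regime: $\hat w$ is a strict suffix of $w$, meaning every node strictly between $\hat w$ and $w$ (and $w$ itself, or rather $w$'s relevant ancestor) was removed, i.e. had ${\sf CanRmv}=1$. Along this chain one applies the ${\sf CanRmv}=1$ inequality at each removed node to compare $\hat p_n(a\mid\hat w)$ with $\hat p_n(a\mid w)$, getting a telescoping bound $\max_a|\hat p_n(a\mid\hat w)-\hat p_n(a\mid w)|\le 2c\sum{\rm cf}(\cdot)$ along the chain; then one sums the geometric-type series of confidence terms and converts back to $p(a\mid x)$ using ${\rm Good}_*$ and the $\gamma(w)$ bound on $\bar p_n$. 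The constants $\frac{2c+2}{c-1}$ and $1+2c$ come out of this summation, exactly as in \cite[Lemma A.2]{belloni2017approximate}.

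\textbf{Main obstacle.} The delicate point is the bookkeeping in the second part: correctly identifying which nodes on the path from $w$ up to $\hat T_n(x)$ satisfy ${\sf CanRmv}=1$, pairing them with the right ``sibling-side'' nodes $w''\succeq{\rm par}(\cdot)$ that appear in the definition of ${\sf CanRmv}$, and controlling the resulting sum $\sum_j\big({\rm cf}(w_j')+{\rm cf}(w_j'')\big)$ by a constant multiple of ${\rm cf}(w)$ using only the monotonicity of ${\rm cf}$ along $\preceq$ (so that every term is at most ${\rm cf}(w)$ but there could be many of them — one must instead use that the confidence at the starting node dominates, or that the path has the geometric structure exploited in \cite{belloni2017approximate}). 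Matching the stated constants $\frac{2c+2}{c-1}$ exactly requires being careful that each pruning step costs a factor controlled by $c/(c-1)$ after summing. Everything else — the reduction to a fixed $w$, the use of ${\rm Good}_*$, the triangle inequalities, and the $\bar p_n$ approximation bound — is routine. I would therefore present the proof as: (1) state and invoke the two structural facts from \cite{belloni2017approximate}; (2) give the contradiction argument for $\hat T_n\subseteq T^*$; (3) carry out the telescoping/summation for the oracle bound, making the chain-of-removed-nodes structure explicit.
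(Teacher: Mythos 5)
The containment part of your plan is fine and is essentially the cited argument (one caveat: the step ``for every $w''\succeq{\rm par}(w)$ the law coincides with $p(\cdot\mid{\rm par}(w))$'' needs ${\rm par}(w)$ to be a \emph{leaf} of $T^*$, which uses the convention that the true context tree is complete; parent\-closedness of $\hat T_n$ plus minimality alone do not give it). The genuine gap is in the oracle inequality, and it is exactly the point you label ``main obstacle'' and leave unresolved: the telescoping along the chain of removed nodes is not the right mechanism and cannot be rescued by monotonicity of ${\rm cf}$ alone, since the chain can have length up to $h_*$ and the sum $\sum_j({\rm cf}(w_j')+{\rm cf}(w_j''))$ is only bounded by (length of chain)$\times{\rm cf}(w)$; there is no geometric decay of the confidence radii along the path. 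Moreover you have the two regimes backwards: the regime $\hat T_n(x)\succ w$ is not an ``easy case handled by walking up the tree''; it is precisely where the term $\frac{2c+2}{c-1}\gamma(w)$ comes from, and it uses the \emph{failure} of the removal criterion (${\sf CanRmv}=0$), which your sketch never invokes.

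The argument behind \cite[Lemmas A.1, A.2]{belloni2017approximate} (which the paper simply cites) runs as follows. Write $\hat w=\hat T_n(x)$; since $w$ and $\hat w$ are both suffixes of $x$ they are comparable. If $\hat w\preceq w$ with $\hat w\neq w$, no telescoping is needed: let $v$ be the child of $\hat w$ on the path to $w$ (so $v\in E_n$, $v\notin\hat T_n$, hence ${\sf CanRmv}(v)=1$); because the ${\sf CanRmv}(v)=1$ condition is quantified over \emph{all} pairs $w'\succeq v$, $w''\succeq{\rm par}(v)=\hat w$, you may take $w'=w$ and $w''=\hat w$ in a single application and get $\max_a|\hat p_n(a\mid w)-\hat p_n(a\mid\hat w)|\le c({\rm cf}(w)+{\rm cf}(\hat w))\le 2c\,{\rm cf}(w)$ by monotonicity of ${\rm cf}$; combining with ${\rm Good}_*$ and the $\bar p_n$ approximation property gives $|\hat p_n(a\mid\hat w)-p(a\mid x)|\le(1+2c)\,{\rm cf}(w)+\gamma(w)$. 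If instead $\hat w\succ w$, use that $\hat w\in\hat T_n$ forces ${\sf CanRmv}(\hat w)=0$: there exists a violating pair $w'\succeq\hat w$, $w''\succeq{\rm par}(\hat w)\succeq w$ with $c({\rm cf}(w')+{\rm cf}(w''))<\max_a|\hat p_n(a\mid w')-\hat p_n(a\mid w'')|$; on ${\rm Good}_*$ the right-hand side is at most ${\rm cf}(w')+{\rm cf}(w'')+2\gamma(w)$ (both $w',w''$ extend $w$, so each $\bar p_n$ is within $\gamma(w)$ of $p(\cdot\mid w)$), whence $(c-1)({\rm cf}(w')+{\rm cf}(w''))\le 2\gamma(w)$ and therefore ${\rm cf}(\hat w)\le{\rm cf}(w')\le\frac{2\gamma(w)}{c-1}$; then $|\hat p_n(a\mid\hat w)-p(a\mid x)|\le{\rm cf}(\hat w)+\gamma(\hat w)\le\frac{2\gamma(w)}{c-1}+\gamma(w)\le\frac{2c+2}{c-1}\gamma(w)$. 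So the constants come from a one-shot use of the universal quantifier in ${\sf CanRmv}=1$ and from the violated inequality at $\hat w$, not from summing a series of confidence terms; as written, your plan does not close either regime.
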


We note that this result implies an oracle inequality for estimating transition probabilities of $\beta$-mixing processes (\cite[Theorem 2]{belloni2017approximate}).

\subsection{Bootstrap-based choice of confidence radii}

In order for our estimator to work well, we need choices of ${\rm cf}(w)$ that ensure that the event ${\rm Good}_*$ holds with high probability. In other words, we need that ${\rm cf}(w)$ are good ``confidence radii", in the sense that the condition $|\bar p_n(a\mid w) - \hat p_n(a|w)|\leq {\rm cf}(w)$ holds for all $w\in E_n$ and $a\in A$ at a prescribed confidence level.

The paper \cite{belloni2017approximate} proposes a conservative choice for the parameters ${\rm cf}(w)$, which is based on a martingale concentration inequalities.
\begin{equation}\label{eq:old}  {\rm cf}(w) \equiv \sqrt{\frac{4}{N_{n-1}(w)}\left(2\log(2+\log_2 N_{n-1}(w))+\log(n^2|A|/\delta)\right) }\end{equation}
This choice does not take into account the correlation structure of the differences $\bar p_n(a\mid w) - \hat p_n(a|w)$ for different $a$ and $w$.

We propose here a different, data-driven choice of ${\rm cf}(w)$ that is based on our martingale bootstrap. We define a martingale $M_n\in\R^{\supp}\otimes \R^A$ with coordinates \[\begin{array}{rl}
M_n(w,a) & \displaystyle :=\frac{N_{n-1}(w)\,(\widehat{p}_n(a|w) - \bar{p}_n(a|w))}{\sqrt{\pi(w)\,n}}\\
&\displaystyle  = \frac{\sum_{k=h_*}^{n}[\Ind{\{X_k=a\}} - p(a|X^{k-1}_{1})]\,\Ind{\{X^{k-1}_{k-|w|}=w\}}}{\sqrt{\pi(w)n}}\,\,((w,a)\in \supp\times A).\end{array}\]

In principle, an ideal critical value would be defined by normalizing the components of $M_n$ to have unit variance as set $$ {\rm cv}^*(\delta) = \mbox{ $(1-\delta)$-quantile of} \ \ \max_{w\in E_n, a \in A} | M_n(w,a) |/\sqrt{\bar p_n(a\mid w)(1-\bar p_n(a\mid w))}$$
which leads to the following definition of ${\rm cf}$ for each $w\in E_n$
$$ {\rm cf}(w) \equiv {\rm cv}^*(\delta)  \frac{\sqrt{\max_{a\in A}\bar p(a\mid w)\{1-\bar p(a\mid w)\}\pi(w)n}}{N_{n-1}(w)}. $$ By definition of the quantile we have that (\ref{eq:Property}) holds. Unfortunately, the tree $E_n$ is random and for deep nodes it would be hard to estimate $\pi(w)$ and $\bar p(a\mid w)$ reliably. Indeed those can be estimated reliably only on typical subtrees as discussed in the typicality assumption. Moreover, the term $\bar p_n(a\mid w)\{1-\bar p_n(a\mid w)\}$ can be arbitrary small (or even zero) for a given $w\in E_n$ and $a\in A$ in some applications.

To mitigate these issues we use a different construction for the critical value. We define the martingale difference for $w \in E_n, a \in A, k \in [n]$:
$$d_k(w,a) = \frac{1}{\sqrt{\pi(w)n}}\{\Ind{\{X_k=a\}} - p(a\mid X^{k-1}_{-\infty})\}\Ind{\{X^{k-1}_{k-|w|}=w\}}$$ and the associated estimator $$\hat d_k(w,a) = \frac{1}{\sqrt{N_{n-1}(w)}}\{\Ind{\{X_k=a\}} - \hat p(a\mid w)\}\Ind{\{X^{k-1}_{k-|w|}=w\}}.$$
Given a finite tree $T\subset A^*$, the estimate for the critical value is computed as
$$\widehat{{\rm cv}}(\delta) = \mbox{conditional} \  (1-\delta)\mbox{-quantile of} \ \max_{w\in T, a\in A}\sum_{k=1}^n g_k \hat d_k(w,a)$$
where $(g_k)$ are i.i.d. standard Gaussian random variables. We then set:
\[\widehat{{\rm cf}}(w):=\frac{\widehat{{\rm cv}}(\delta) }{\sqrt{N_{n-1}(w)}}\mbox{ if }w\in T,\]
and for $w \not \in T$ we set $ {\rm cf}(w)$ as in (\ref{eq:old}) with $\delta/n$ instead of $\delta$.

\begin{remark}[On the definition of the tree $T$] The choice of tree $T$ is part of its definition of the parameters ${\rm cf}(w)$ and is thus part of the definition of the estimator. For simplicity, we assume that $T$ is chosen before seeing the data. Some modifications allow for data-driven choices of $T$; for instance, $T$ may consist of all $w\in E_n$ with $N_{n-1}(w)\geq t_n$ for some $t_n$. On the other hand, simulations suggest that taking $T=E_n$ is not a valid choice. \end{remark}

\begin{remark}[Implementation and Computational Aspects of the Bootstrap]
In many settings, bootstrap procedures are computationally intensive and not efficient for high-dimensional applications. No such problem arises in our case, as the procedure benefits from a recursive property related to the tree structure. Indeed, this property is typical of many other estimators for context trees in the literature. 

Specifically, the calculation of the bootstrapped quantities can be performed recursively. Given a bootstrap replication with multipliers $(g_k)_{k=1}^n$, letting \[N_n^*(w) = \sum_{k=1}^n g_k \Ind{\{X^k_{k-|w|}=w\}},\] we have $N_n^*(w)=\sum_{b\in A}N_n^*(bw)$ and
 $$ \sum_{k=1}^ng_k\hat d_k(w,a) = \frac{N^*_{n-1}(wa) - \hat p(a\mid w) N^*_{n-1}(w)}{\sqrt{N_{n-1}(w)}}. $$
Therefore the recursion is similar to the recursion already present in many context tree algorithms. Thus, after the appropriate data structure is set, after the construction of the tree $E_n$, computing the bootstrap repetitions consists of aggregating the multipliers through the tree from the leaves to the root recursively (no recalculation of the tree $E_n$ is needed). $\square$ \end{remark}

\subsection{Conditions for validity of the bootstrap}

We now analyse the validity of our bootstrap-based method. We work in a setting where the sample size $n$ grows and the tree $T=T_n$ potentially depends on $n$ (although this dependence will be left implicit). We will need to quantify the {\em typicality} and {\em continuity} parameters of $T$.
\begin{itemize}
\item[] {\em Typicality:} Given $\eps\in (0,1)$, we assume $\alpha_\eps,\eps\in (0,1)$ is such that
\[\Pr{\forall w\in T\,:\,\left|\frac{N_{n-1}(w)}{\pi(w)\,n} - 1\right|\leq \eps}\geq 1-\alpha_\eps.\]
\item[]{\em Continuity at the leaves: } we let $\gamma>0$ be such that, for all leaves $w\in \Leaves{T}$, the continuity parameter at $w$ satisfies $\gamma(w)\leq \gamma$.\end{itemize}

These numbers characterize the how the tree $T$ can be used as (an approximate) context for the process  $(X_k)_{k\in\Z}$. The typicality property bounds how big $T$ can be for a desired precision $\epsilon$ and confidence $1-\alpha$. On the other hand, to have a small bias for approximating context longer than its leaves, $T$ might not be too small. Thus every choice of context $T$ can be associated with a triple $(\epsilon,\alpha_\eps,\gamma)$.  Clearly, if $T$ is the exact context tree, we have $\gamma = 0$, however in many settings we want to consider trees that are adaptive to the sample size (and potentially grow). We refer to \cite{belloni2017approximate} for simple conditions on the process  $(X_k)_{k\in\Z}$ and choices of trees $T=T_n$ that lead to specific triples $(\epsilon,\alpha_\epsilon,\gamma)\to (0,0,0)$ as $n\to \infty$.

Next we state the exact assumptions we need for our method to work. 

\begin{assumption}\label{assump:C}
Suppose that the following conditions hold:\\
(i) for some $\rho \in (0,1/2)$ we have  $\sum_{w \in \Leaves{T}} \pi^{-1/2}(w) \leq Cn^{\rho} $\\
(ii) $\max_{a\in A, w\in \Leaves{T}} \bar p^{1/2}(a\mid w)\{1 - \bar p(a\mid w)\} \geq c $\\
(iii) for some sequence $\delta_n \to 0$, the following relations hold:\\
$ \epsilon \leq  \delta_n\frac{\delta}{\log (d) \log^{1/2}(dn/\delta)\log (n)}
$, \ \
$\alpha_\eps + \gamma|A| \leq \delta_n\frac{\delta^3}{\log^2(n)\log^2(d)}$, \ \ $ n^{-1/2+\rho} \leq \delta_n\frac{\delta^4}{\log^5(d)\log^4(n)}$\\
\end{assumption}

\begin{figure}[h]
\begin{center}
\begin{tabular}{p{0.49\textwidth}p{0.47\textwidth}}
\centering Penalty based on bootstrap & \centering Penalty based on self-normalizion \end{tabular}
\includegraphics[width=0.49\textwidth]{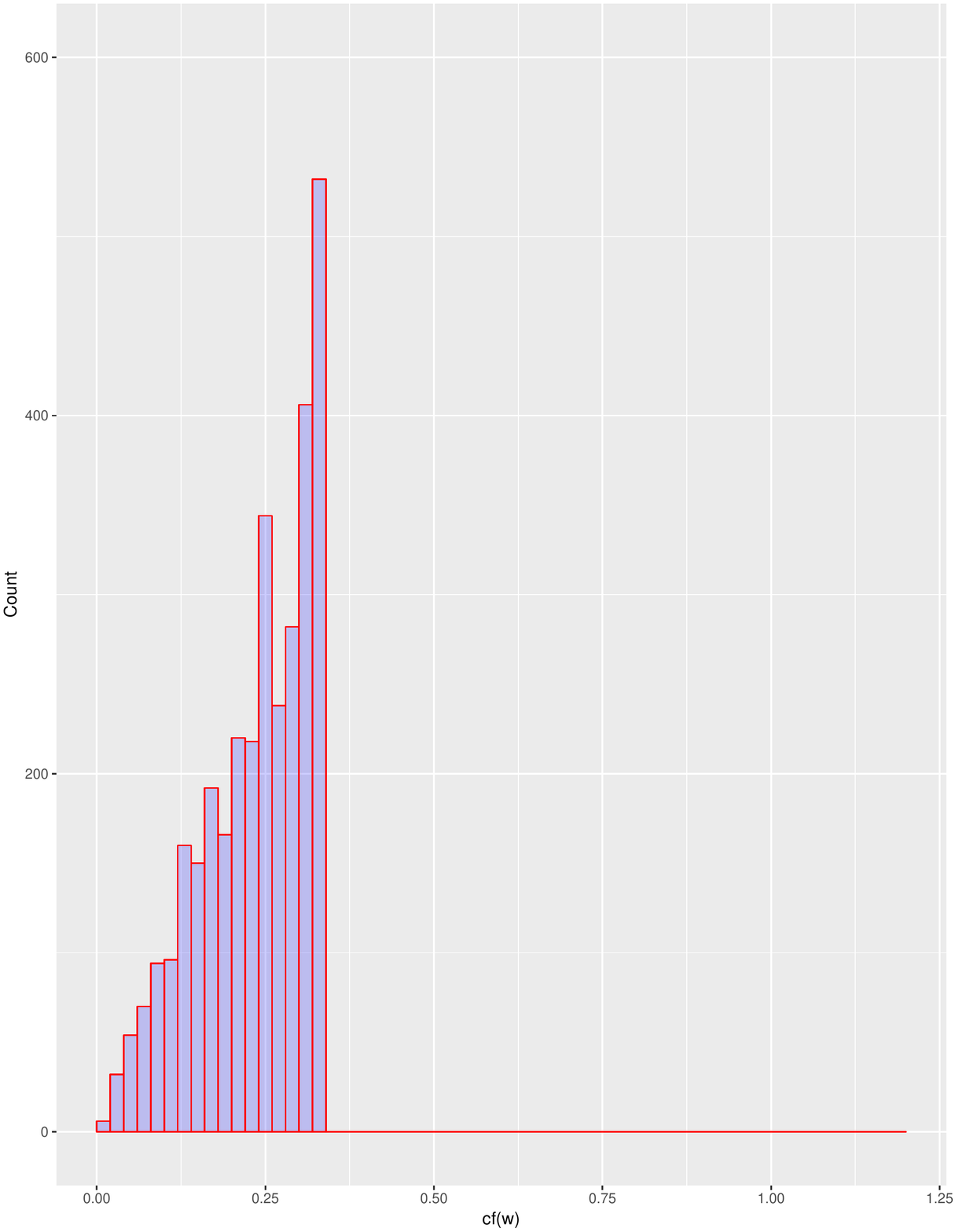}%
\includegraphics[width=0.49\textwidth]{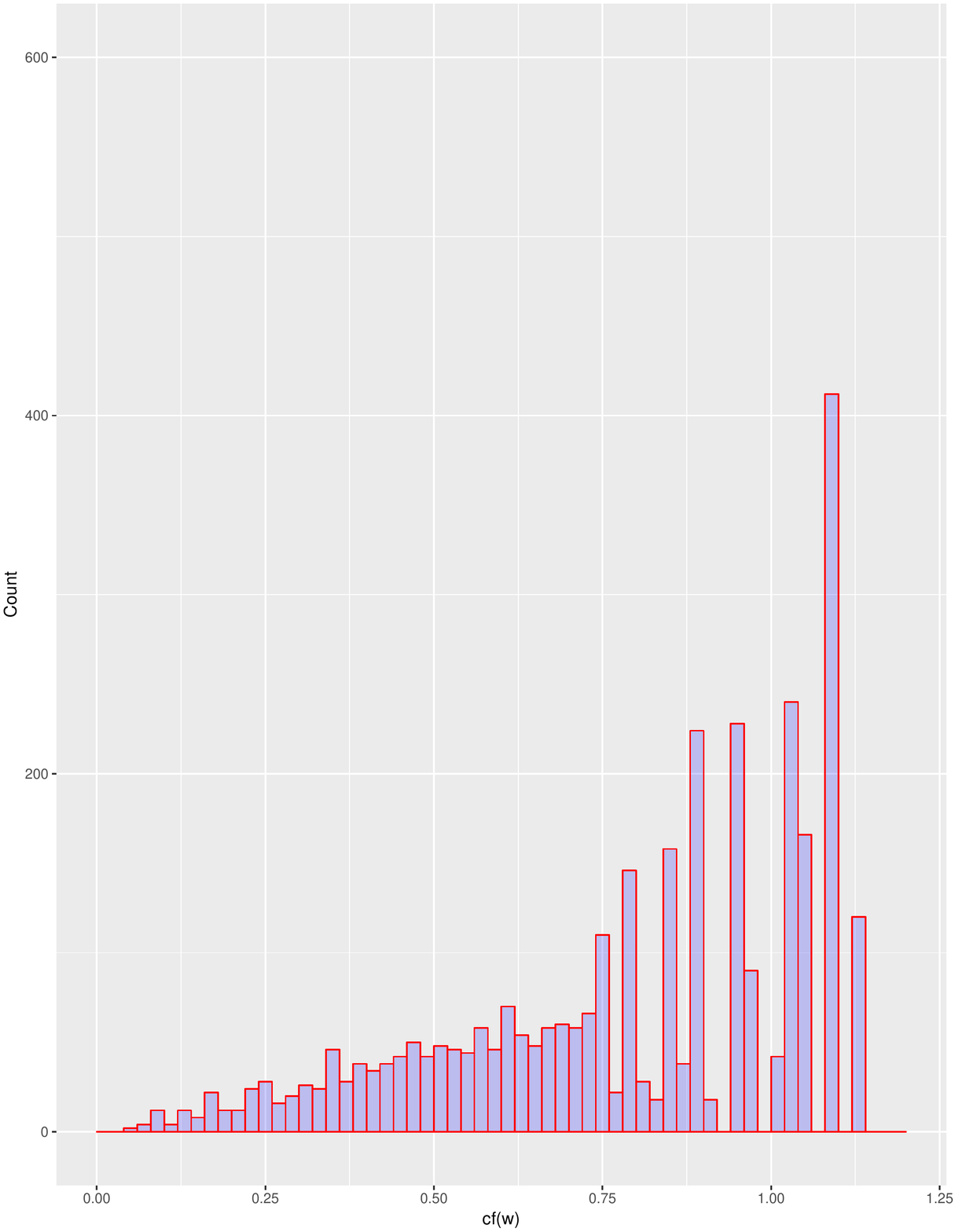}%
\caption{We considered a Markov chain of order 3 and a sample size of $n=5000$. The figure illustrates the histogram of the 1630 penalty parameters ${\rm cf}(w)$, $w\in E_n$, for the proposed bootstrap based methods (left) and the analytical bounds based on self-normalization (right). Both choices are theoretically valid but the bootstrap based adapts to the correlation structure of the process leading to smaller penalty choices.}
\end{center}
\end{figure}

\begin{figure}[h]
\begin{center}
\includegraphics[width=0.8\textwidth, height=0.5\textheight]{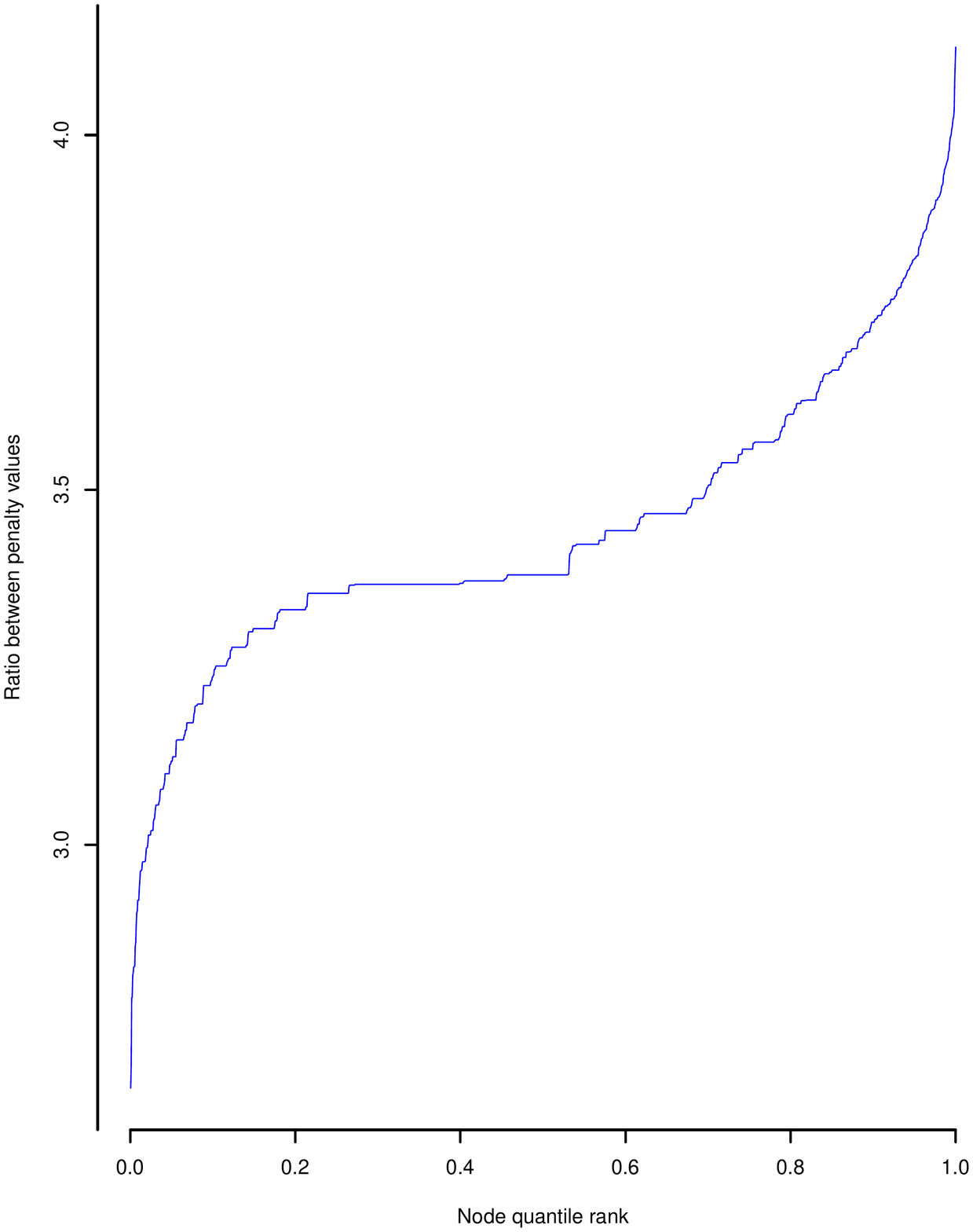}%
\caption{We considered a Markov chain of order 3 and a sample size of $n=5000$. The figure illustrates the penalty parameters ${\rm cf}(w)$, $w\in E_n$ for the different proposals. For each node we computed the ratio (self-normalized choice divided by the bootstrap-based choice) and ordered these ration to be plotted. The bootstrap based proposal seems to be a factor of three smaller across nodes.}
\end{center}
\end{figure}

Assumption \ref{assump:C})(i) allows for the frequency of the leaves of $T$ to decrease to zero but it bounds how fast it can decrease to zero. Note that it bounds the number of leaves of $T$ to be bounded by $Cn^\rho$. Assumption \ref{assump:C})(ii) is very mild as it requires only that at least one transition probability to be bounded away from zero and from 1. Finally, Assumption \ref{assump:C})(iii) provides sufficient conditions relating $n$, the dimension $d$ and other parameters. In particular it allows for $d$ to increase with $n$. Our result is as follows. 


\begin{theorem}\label{thm:ContextFinal}
Suppose that Assumption \ref{assump:C}, typicality and continuity hold. Then the event ${\rm Good}_*$ defined in (\ref{eq:defGood}) has probability:
\[\Pr{{\rm Good}_*}\geq 1-\delta - o(\delta/\log n).\]
In particular, we obtain that, with probability $\geq 1-\delta - o(\delta/\log n)$, for all $a\in A$ and $x\in A^{-1}_{-\infty}$,
\[|\hat P_n(a|x) - p(a|x)|\leq \inf_{w\in T,w\preceq x}\left( \frac{2c+2}{c-1}\,\gamma(w) + (1+2c)\,{\rm cf}(w)\right).\]\end{theorem}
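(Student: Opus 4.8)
The plan is to reduce the theorem to an application of the max-martingale CLT (Theorem~\ref{thm:clt:max:martingale}) together with the anti\=/concentration bound (Theorem~\ref{thm:max:anticoncentration}), following the template of the Many Means argument behind Theorem~\ref{thm:MMinfty}. First I would observe that the event ${\rm Good}_*$ only concerns nodes $w\in E_n$. For $w\notin T$, the radius ${\rm cf}(w)$ is taken as the old self\=/normalized choice \eqnref{old} with $\delta/n$ in place of $\delta$, so a union bound over the at most polynomially many such nodes and the martingale concentration inequality from \cite{belloni2017approximate} already gives $|\bar p_n(a|w)-\hat p_n(a|w)|\leq {\rm cf}(w)$ for all such $(w,a)$ with probability $\geq 1-o(\delta/\log n)$. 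Thus it suffices to control the event restricted to $w\in T$, where the bootstrap radius $\widehat{{\rm cf}}(w)=\widehat{{\rm cv}}(\delta)/\sqrt{N_{n-1}(w)}$ is used.

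Next I would rewrite, for $w\in T$, the deviation in terms of the martingale: by definition $N_{n-1}(w)(\hat p_n(a|w)-\bar p_n(a|w)) = \sqrt{N_{n-1}(w)}\sum_{k}\hat d_k(w,a)\cdot(\text{a ratio close to }1)$, and more precisely $\sqrt{\pi(w)n}\,M_n(w,a)=\sum_k \{\Ind{X_k=a}-p(a|X_1^{k-1})\}\Ind{X_{k-|w|}^{k-1}=w\}$, so on the typicality event (where $N_{n-1}(w)/(\pi(w)n)\approx 1$ uniformly over $w\in T$) the statement $|\bar p_n(a|w)-\hat p_n(a|w)|\leq \widehat{{\rm cf}}(w)$ for all $(w,a)\in T\times A$ is equivalent, up to a $1+O(\eps)$ factor, to $\max_{w\in T,a\in A}\sqrt{\pi(w)n}\,|M_n(w,a)|/N_{n-1}(w)\leq \widehat{{\rm cv}}(\delta)$, i.e. to $\max_{w\in T,a}|M_n(w,a)|/\sqrt{N_{n-1}(w)}$ being below its bootstrap quantile. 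I would then set up the $d$\=/dimensional martingale with $d=|T|\cdot|A|\leq C|A|n^\rho$ coordinates indexed by $(w,a)\in T\times A$, increments $\xi_k(w,a)$ proportional to $d_k(w,a)$ (symmetrized to $\pm$ to handle the absolute value, as in Section~\ref{SEC:MM}), check Assumption~\ref{assump:A} with $V$ the deterministic matrix of limiting covariances and $V_\delta$ controlled by the typicality/continuity parameters, and verify the moment bound $\frac1n\sum_k \Ex{\|\xi_k\|_\infty^3}\lesssim n^\rho$ using Assumption~\ref{assump:C}(i). Applying Theorem~\ref{thm:clt:max:martingale} twice — once to pass from $\max|M_n(w,a)|$ to a Gaussian maximum $\widetilde Z$, once to pass from the bootstrap statistic $\max|\sum_k g_k\hat d_k(w,a)|$ (a Gaussian with covariance $W_n=\sum_k \hat d_k\hat d_k'$) to the same $\widetilde Z$ — together with the estimation error bound $\|V-W_n\|_\infty\leq \delta_n^2/\log^2(dn)$ from Assumption~\ref{assump:C}(ii)--(iii), gives that the true quantile and the bootstrap quantile $\widehat{{\rm cv}}(\delta)$ differ by $O(\bar\delta)$ on the relevant scale. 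Choosing $\bar\delta$ polynomially small in $\delta$ and $\delta_n$, and absorbing the normalization discrepancies from $N_{n-1}(w)$ vs $\pi(w)n$ and from $\hat p$ vs $\bar p$ inside $\hat d_k$ into the slack, yields $\Pr{{\rm Good}_*}\geq 1-\delta-o(\delta/\log n)$; the second display of the theorem is then immediate from the deterministic oracle inequality stated just before Section~\ref{SECCONTEXT}.

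The main obstacle I expect is the anti\=/concentration step needed to convert the Kolmogorov\=/type closeness ``$\Pr{Z\in A}\leq \Pr{\widetilde Z\in A^{C\bar\delta}}+a_n(\bar\delta)$'' into an actual comparison of quantiles with error $o(\delta)$. Because the normalized martingale coordinates for deep nodes $w$ can have variance close to zero (the term $\bar p(a|w)(1-\bar p(a|w))$ may be tiny, which is exactly why the $\hat d_k$\=/construction avoids dividing by it), the classical anti\=/concentration bounds that depend on the \emph{minimum} variance are useless here, and one must invoke the new Theorem~\ref{thm:max:anticoncentration}, whose bound depends only on the \emph{maximum} variance at the cost of a $\sqrt{\log d}$ factor and is valid only for $t$ above a high quantile. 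Making this work requires that the bootstrap critical value $\widehat{{\rm cv}}(\delta)$ genuinely lands in the ``large $t$'' regime $t\geq\bar\sigma\Phi^{-1}(0.95)$ — which holds since $\widehat{{\rm cv}}(\delta)\asymp\sqrt{\log d}$ while $\bar\sigma\lesssim 1$ by Assumption~\ref{assump:C}(ii) — and careful bookkeeping that the extra $\sqrt{\log d}$ from anti\=/concentration, when multiplied against $\bar\delta$ and the error terms $a_n(\bar\delta)$ and $\|V-W_n\|_\infty$, still sums to $o(\delta/\log n)$; this is precisely what the three inequalities in Assumption~\ref{assump:C}(iii) are calibrated to guarantee. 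A secondary technical point is justifying that the two-sided statistic (with $|M_n(w,a)|$) and the random, sample-dependent normalization $\sqrt{N_{n-1}(w)}$ can be folded into the framework without breaking the martingale structure; this is handled exactly as in the proof of Theorem~\ref{thm:MMinfty} by working on the typicality event and treating the $O(\eps)$ multiplicative errors as an additional contribution to the Borel-set enlargement.
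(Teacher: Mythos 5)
Your overall architecture matches the paper's: split ${\rm Good}_*$ into $w\in E_n\setminus T$ (handled by the self\-/normalized choice \eqnref{old} at level $\delta/n$) and $w\in T$; on the typicality event rewrite the $T$\-/part as $\max_{w\in T,a\in A}|M_n(w,a)|$ exceeding $\widehat{{\rm cv}}(\delta)$ up to a $1+O(\eps)$ factor; compare the data maximum to a Gaussian maximum via \thmref{clt:max:martingale} (with Assumption \ref{assump:A} verified through typicality/continuity, i.e.\ Proposition \ref{prop:Vbound}, and the third\-/moment bound through Assumption \ref{assump:C}(i), i.e.\ Proposition \ref{prop:Sigma}); and convert distributional closeness into quantile comparison via the max\-/variance anti\-/concentration bound, Theorem \ref{thm:max:anticoncentration}, in the large\-/$t$ regime. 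All of that is faithful to the paper.

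The genuine gap is in how you dispose of the bootstrap side. You invoke the second display of \thmref{clt:max:martingale} with $W_n=\sum_k \hat d_k\hat d_k'$ and assert ``the estimation error bound $\|V-W_n\|_\infty\leq \delta_n^2/\log^2(dn)$ from Assumption \ref{assump:C}(ii)--(iii)''. No such condition appears in Assumption \ref{assump:C}: items (ii)--(iii) only bound $\bar p(a\mid w)(1-\bar p(a\mid w))$ away from zero and calibrate $\epsilon$, $\alpha_\eps+\gamma|A|$ and $n^{-1/2+\rho}$ against $\delta,\delta_n,d,n$. You are transplanting Assumption \ref{assum:MMinfty}(ii)--(iii) from the Many Means section, where covariance estimation error is assumed; in the context\-/tree setting it must be \emph{proven}, and this is where most of the paper's work in its Step 2 lies. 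Concretely, two separate estimates are needed: (a) a uniform bound on $|\hat d_k(w,a)-d_k(w,a)|$ over $w\in T$, which requires typicality, the leaf\-/continuity parameter $\gamma$, \emph{and} the high\-/probability bound (\ref{eq:old2}) on $|\bar p(a\mid w)-\hat p(a\mid w)|$ coming from the self\-/normalized concentration inequality of \cite{belloni2017approximate} -- this controls $\sum_k g_k(\hat d_k-d_k)$ conditionally via a Gaussian maximal inequality and Borell's inequality; and (b) concentration of the realized covariation $\sum_k d_k(w,a)d_k(w',a')$ around its predictable compensator, which the paper gets from a Bernstein\-/type inequality (Ledoux--Talagrand Lemma 1.6) using Assumption \ref{assump:C}(i) to control $\min_{w\in\Leaves{T}}\pi(w)$. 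Without (a) and (b) the hypothesis of your ``second application'' of \thmref{clt:max:martingale} is unverified, and the claim that $\widehat{{\rm cv}}(\delta)$ is within $O(\bar\delta)$ of the true quantile has no support. (A stylistic remark: the paper does not actually route the bootstrap through \thmref{clt:max:martingale}; it compares $\widehat Z^*$, $Z^*$ and $Z$ directly using conditional Gaussianity, the perturbation Lemma \ref{lem:perturbV}, Strassen's theorem and then anti\-/concentration. Your route could be made to work, but only after supplying exactly the estimates (a)--(b) above.)
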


Theorem \ref{thm:ContextFinal} characterizes sufficient conditions for the use of the proposed $\widehat{{\rm cf}}(w)$ that uses a bootstrap procedure. This result builds upon the general  theorem for the maximum of a high-dimensional martingale, and the new anti-concentration result. However, the control of various approximation errors relies on the structure of the context trees. In the next section, we collect the main ideas and technical results that are also used in the proof of Theorem \ref{thm:ContextFinal}.

\subsection{Leaf martingales, the operator $\Sum$, and quadratic variation}

It will be convenient to consider the simpler object $\Leaves{M}_n\in\R^{\Leaves{T}}\otimes \R^A$ obtained by restricting $M_n$ to $(w,a)$ with $w \in \Leaves{T}$. We call $\Leaves{M}_n$ and $M_n$ the {\em  leaf martingale} and {\em full martingale}, respectively.

The two martingales are related by a linear operator that we will now describe. Given $w\in \Leaves{T}$, let ${\rm Path}_T(w)$ denote the set of all $\tilde w\in T$ with $\tilde w\preceq w$ (i.e., $\tilde w$ that lies on the path between $w$ and the root of $T$). Let $\{e_w\}_{w\in \Leaves{T}}$ denote the canonical basis vectors of $\R^{\Leaves{T}}$ and define a linear transformation $\Sum:\R^{\Leaves{T}}\to \R^T$ via
\[\Sum: e_w\in \R^{\Leaves{T}}\mapsto \sum_{x\in {\rm Path}_T(w)}\,\sqrt{\frac{\pi(w)}{\pi(x)}}\,e_x.\]
We also abuse notation and denote by $\Sum$ the tensor product of $\Sum$ with the identity operator on $a\in A$. Simple inspection reveals:
\[M_n = \Sum\,\Leaves{M}_n.\]
Therefore, understanding $\Leaves{M}_n$ will lead to an understanding of $M_n$. In particular, the quadratic variations $V_n$ of $M_n$ and $\Leaves{V}_n$ of $\Leaves{M}_n$ are related by:
\[V_n = \Sum\,\Leaves{V}_n\,\Sum'.\]

We will need a Lemma on $\Sum$ that will allow us to compare matrices of the above form. Recall that $\|\cdot\|_\infty$ is the entrywise $\ell_\infty$ norm on matrices.

\begin{lemma}\label{lem:Sinitial} Consider two matrices acting over $\R^{\Leaves{T}}\times \R^A$, both of the form
\[Q:=\sum_{w\in \Leaves{T}}\,e_we_w'\otimes Q_w, \ \ \ \mbox{and} \ \ \ \tilde{Q}:=\sum_{w\in \Leaves{T}}\,e_we_w'\otimes \tilde{Q}_w,\]
where $Q_w,\tilde{Q}_w\in \R^{A\times A}$. Then we have \[\|\Sum\,(Q - \tilde{Q})\,\Sum'\|\leq \max_{w\in \Leaves{T}}\|Q_w - \tilde{Q}_w\|.\]\end{lemma}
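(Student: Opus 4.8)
The plan is to reduce the claimed bound to a statement about a single matrix $R_w := Q_w - \tilde Q_w \in \R^{A\times A}$ and then exploit the block structure of $\Sum(Q-\tilde Q)\Sum'$. First I would observe that, because $Q$ and $\tilde Q$ are both block-diagonal over $\Leaves{T}$ (indexed by the leaf $w$, with $A\times A$ block $Q_w$ resp.\ $\tilde Q_w$), so is their difference: $Q - \tilde Q = \sum_{w\in\Leaves{T}} e_we_w'\otimes R_w$. Applying the definition $\Sum: e_w \mapsto \sum_{x\in{\rm Path}_T(w)}\sqrt{\pi(w)/\pi(x)}\,e_x$ on the left and $\Sum'$ on the right, one gets
\[
\Sum(Q-\tilde Q)\Sum' \;=\; \sum_{w\in\Leaves{T}}\ \sum_{x,y\in{\rm Path}_T(w)}\ \frac{\pi(w)}{\sqrt{\pi(x)\pi(y)}}\ (e_xe_y'\otimes R_w).
\]
So the $(x,a),(y,b)$ entry of this matrix is $\sum_{w:\,x,y\preceq w}\frac{\pi(w)}{\sqrt{\pi(x)\pi(y)}}\,R_w(a,b)$, a sum over the leaves $w$ that lie below both $x$ and $y$ in the tree.

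The key combinatorial input is that $\{\pi(w):w\in\Leaves{T},\ w\succeq x\}$ sums to at most $\pi(x)$ — indeed the leaves below $x$ partition a subevent of $\{X^{-1}_{-|x|}=x\}$, so $\sum_{w\succeq x}\pi(w)\le\pi(x)$. Since $x,y$ both lie on ${\rm Path}_T(w)$ they are comparable (both are suffixes of the same string $w$), so we may assume WLOG $x\succeq y$, whence the set of leaves below both is just the leaves below $x$, and $\sqrt{\pi(x)\pi(y)}\ge \pi(x)$. Thus the coefficient vector $\bigl(\pi(w)/\sqrt{\pi(x)\pi(y)}\bigr)_{w}$ is nonnegative with sum at most $\pi(x)/\sqrt{\pi(x)\pi(y)}\le 1$; i.e., each $(x,y)$-block of $\Sum(Q-\tilde Q)\Sum'$ is a sub-convex combination $\sum_w \lambda^{(x,y)}_w R_w$ with $\lambda^{(x,y)}_w\ge0$, $\sum_w\lambda^{(x,y)}_w\le1$, supported on $\{w\in\Leaves{T}:w\succeq x\vee y\}$. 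For the entrywise $\ell_\infty$ norm this immediately gives
\[
\|\Sum(Q-\tilde Q)\Sum'\|_\infty \;=\; \max_{(x,a),(y,b)}\Bigl|\sum_w\lambda^{(x,y)}_w R_w(a,b)\Bigr| \;\le\; \max_w \|R_w\|_\infty \;=\; \max_{w\in\Leaves{T}}\|Q_w-\tilde Q_w\|_\infty,
\]
which is exactly the claim (the paper's $\|\cdot\|$ on matrices here is the entrywise $\ell_\infty$ norm, as stated just before the lemma).

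The main obstacle — really the only nontrivial point — is justifying the inequality $\sum_{w\in\Leaves{T},\,w\succeq x}\pi(w)\le\pi(x)$ and making sure the support/comparability bookkeeping is airtight (that any two nodes appearing together on some ${\rm Path}_T(w)$ are $\preceq$-comparable, and that the leaves of $T$ below a node form a disjoint family of events). Once that is in place the rest is just unwinding the tensor-product definition of $\Sum$ and using that an entrywise $\infty$ norm is non-increasing under taking sub-convex combinations of the entries. (If one wanted the statement for a general matrix norm $\|\cdot\|$ rather than $\|\cdot\|_\infty$ one could instead write $\Sum(Q-\tilde Q)\Sum' = \sum_w \pi(w)\,(\Sum e_w)(\Sum e_w)'\otimes R_w$ and bound via the triangle inequality together with $\|(\Sum e_w)(\Sum e_w)'\|\le$ something controlled by $\sum_{x\preceq w}\pi(w)/\pi(x)$, but for the $\ell_\infty$ norm the direct entrywise argument above is cleanest.)
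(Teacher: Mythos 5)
Your proposal is correct and takes essentially the same route as the paper: expand $\Sum(Q-\tilde Q)\Sum'$ entrywise via the definition of $\Sum$, observe that nonzero entries occur only at $\preceq$-comparable pairs $(x,y)$, and bound each entry as a sub-convex combination of the $R_w(a,b)$ using $\sum_{w\in\Leaves{T}:\,w\succeq x}\pi(w)\le\pi(x)$ together with monotonicity of $\pi$ along the suffix order. The only cosmetic difference is that you justify this key inequality directly from disjointness of the leaf events, while the paper invokes completeness of $T$ to get the corresponding equality; both suffice.
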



Next we shall compute the terms related to the quadratic variations of $\Leaves{M}_n$ and $M_n$. We will also argue that the respective quadratic variations $\Leaves{V}_n$ and $V_n$ are both close to deterministic matrices. We use $\Leaves{\xi}_t = \Leaves{M}_t - \Leaves{M}_{t-1}$ and $\xi_t = M_t-M_{t-1}$ to denote the increments of the two martingales, noting that $\xi_t = \Sum\,\Leaves{\xi}_t$. The first fact we need is this.

\begin{proposition}\label{prop:Sigma}Let $\Sigma_t:=\Exp{t-1}{\xi_t\xi_t'}$. Let $p_t := (p(a|X^{t-1}_{-\infty}))_{a\in A}$ and let $\sqrt{p_t}$ denote the coordinatewise square root of this vector. Then:
\[\Sigma^{1/2}_t = \left(\sum_{w\in \Leaves{T}}\Ind{\{T(X^{t-1}_{-\infty})=w\}}\,\frac{\Sum\,e_we_w'}{\sqrt{\pi(w)\,n}}\right)\otimes [(I-p_t{\bf 1}')\,{\rm diag}(\sqrt{p_t})].\]
Moreover,
\[\Ex{\|\xi_t\|^3_\infty} \leq \frac{C}{n^{3/2}}\sum_{w\in \Leaves{T}}\pi^{-1/2}(w),\]
and if $N\in\R^T\otimes \R^A$ is standard Gaussian and independent of $\sF_{t-1}$,
\[ \Ex{\|\Sigma^{1/2}_tN\|_{\infty}^3} \leq \frac{C}{n^{3/2}}\,\sum_{w\in \Leaves{T}}\pi^{-1/2}(w).\]
\end{proposition}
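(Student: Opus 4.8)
The plan is to derive all three statements from the leaf martingale $\Leaves{M}_n$, whose increments occupy a single coordinate block, and then transport the conclusion through the deterministic operator $\Sum$. Fix $t$, let $w_t\in\Leaves{T}$ be the (unique) leaf of $T$ that is a suffix of $X^{t-1}_{-\infty}$, equivalently $w_t=T(X^{t-1}_{-\infty})$; this is $\sF_{t-1}$-measurable. Put $\zeta_t:=(\Ind{\{X_t=a\}}-p_t(a))_{a\in A}\in\R^{A}$. Since for a leaf $w$ one has $\Ind{\{X^{t-1}_{t-|w|}=w\}}=1$ precisely when $w=w_t$, the leaf increment is $\Leaves{\xi}_t=(\pi(w_t)\,n)^{-1/2}\,e_{w_t}\otimes\zeta_t$. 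Conditionally on $\sF_{t-1}$ the vector $(\Ind{\{X_t=a\}})_{a\in A}$ is a single multinomial draw with probability vector $p_t$, so $\Exp{t-1}{\zeta_t\zeta_t'}={\rm diag}(p_t)-p_tp_t'$, hence
\[\Leaves{\Sigma}_t=\Exp{t-1}{\Leaves{\xi}_t(\Leaves{\xi}_t)'}=\frac{1}{\pi(w_t)\,n}\,e_{w_t}e_{w_t}'\otimes\big({\rm diag}(p_t)-p_tp_t'\big),\qquad \Sigma_t=\Sum\,\Leaves{\Sigma}_t\,\Sum'.\]

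For the square-root identity I would take $\Sigma_t^{1/2}:=\Sum\,(\Leaves{\Sigma}_t)^{1/2}$ with $(\Leaves{\Sigma}_t)^{1/2}:=(\pi(w_t)\,n)^{-1/2}\,e_{w_t}e_{w_t}'\otimes B_t$. This is $\sF_{t-1}$-measurable ($\Sum$ is deterministic and $w_t,p_t$ are $\sF_{t-1}$-measurable), and, using $e_{w_t}'e_{w_t}=1$, it satisfies $\Sigma_t^{1/2}(\Sigma_t^{1/2})'=\Sigma_t$ as soon as $B_tB_t'={\rm diag}(p_t)-p_tp_t'$. It then remains to verify that $B_t:=(I-p_t{\bf 1}')\,{\rm diag}(\sqrt{p_t})$ does the job: $B_tB_t'=(I-p_t{\bf 1}')\,{\rm diag}(p_t)\,(I-{\bf 1}p_t')$, and expanding this with ${\bf 1}'{\rm diag}(p_t)=p_t'$, ${\rm diag}(p_t){\bf 1}=p_t$ and $p_t'{\bf 1}=1$ collapses it to ${\rm diag}(p_t)-p_tp_t'$. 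Finally, writing $e_{w_t}e_{w_t}'=\sum_{w\in\Leaves{T}}\Ind{\{T(X^{t-1}_{-\infty})=w\}}\,e_we_w'$ and distributing $\Sum$ over the sum yields exactly the displayed formula for $\Sigma_t^{1/2}$.

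For the moment bounds I would insert $\Sum\,e_w=\sum_{x\in{\rm Path}_T(w)}\sqrt{\pi(w)/\pi(x)}\,e_x$, giving $\xi_t=\Sum\,\Leaves{\xi}_t=n^{-1/2}\sum_{x\in{\rm Path}_T(w_t)}\pi(x)^{-1/2}\,e_x\otimes\zeta_t$, so $\|\xi_t\|_\infty\le n^{-1/2}\,\|\zeta_t\|_\infty\,\max_{x\in{\rm Path}_T(w_t)}\pi(x)^{-1/2}$. Since $\|\zeta_t\|_\infty\le 1$ and $\pi$ is non-increasing along the suffix order $\preceq$ (deeper nodes have smaller $\pi$, so the path-maximum is attained at $w_t$), we get $\|\xi_t\|_\infty^3\le n^{-3/2}\pi(w_t)^{-3/2}$. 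Taking expectations and using stationarity, $\Pr{T(X^{t-1}_{-\infty})=w}=\Pr{X^{t-1}_{t-|w|}=w}=\pi(w)$ for every leaf $w$, hence
\[\Ex{\|\xi_t\|_\infty^3}\le \frac{1}{n^{3/2}}\sum_{w\in\Leaves{T}}\pi(w)^{-3/2}\,\pi(w)=\frac{1}{n^{3/2}}\sum_{w\in\Leaves{T}}\pi(w)^{-1/2}.\]
The Gaussian estimate is the same computation with $\zeta_t$ replaced by $B_tN_{w_t}$, where $N_w\in\R^A$, $w\in\Leaves{T}$, are the i.i.d.\ standard Gaussian leaf blocks of $N$: from $\Sigma_t^{1/2}N=n^{-1/2}\sum_{x\in{\rm Path}_T(w_t)}\pi(x)^{-1/2}\,e_x\otimes(B_tN_{w_t})$ one gets $\|\Sigma_t^{1/2}N\|_\infty\le n^{-1/2}\pi(w_t)^{-1/2}\|B_tN_{w_t}\|_\infty$, and conditionally on $\sF_{t-1}$ the vector $B_tN_{w_t}$ is $N(0,{\rm diag}(p_t)-p_tp_t')$, whose coordinates have variance $p_t(a)(1-p_t(a))\le 1/4$; therefore $\Exp{t-1}{\|B_tN_{w_t}\|_\infty^3}\le C$ for a constant $C$ depending only on the fixed alphabet size $|A|$, and the same stationarity step finishes the proof.

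The argument is largely bookkeeping; the points that require genuine care are (a) verifying that the chosen $\Sigma_t^{1/2}$ is $\sF_{t-1}$-measurable and the algebraic identity $B_tB_t'={\rm diag}(p_t)-p_tp_t'$; (b) the ``single active leaf'' observation together with the monotonicity of $\pi$ along $\preceq$, which is what collapses $\max_x\pi(x)^{-1/2}$ to $\pi(w_t)^{-1/2}$; and (c) the harmless dimension mismatch between $\R^{\Leaves{T}}$ and $\R^T$ — the chosen $\Sigma_t^{1/2}$ need not be square, which is permitted by the convention in Section~\ref{SEC:NOTATION}, and $N$ is read as a standard Gaussian on the leaf blocks. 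I do not foresee a more serious obstacle.
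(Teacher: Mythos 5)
Your proof is correct and takes essentially the same route as the paper's: isolate the single active leaf block, identify the conditional covariance as the multinomial matrix ${\rm diag}(p_t)-p_tp_t'$ with square root $(I-p_t{\bf 1}')\,{\rm diag}(\sqrt{p_t})$, push through $\Sum$, observe that the $\ell_\infty$ norm is attained at the leaf because $\pi$ is non-increasing along $\preceq$, and average over leaves using $\Pr{T(X^{t-1}_{-\infty})=w}=\pi(w)$. The only cosmetic difference is in the Gaussian third-moment step, where the paper gets a universal constant via $\Ex{\max_a|\cdot|^3}\leq \left(\sum_a\Ex{|\cdot|^6}\right)^{1/2}$ together with $\sum_a p_t(a)^3\leq 1$, while your constant depends on the (fixed) alphabet size $|A|$, which is harmless here.
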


Proposition \ref{prop:Sigma} controls the impact of the higher order moments that is needed in the application of Theorem \ref{thm:clt:max:martingale}. Provided the choice of tree $T$ has leaves that are not unlikely to be observed, it states that such higher order terms are negligible.

Next we construct deterministic matrices $V$ and $V_\delta$ to approximate $V_n$. We first define, for each $w\in \Leaves{T}$, a (deterministic) matrix $C_w\in \R^{A\times A}$ given by:
\begin{equation}\label{def:Cw}C_w(a,a'):= \left\{\begin{array}{ll}p(a|w)(1-p(a|w)),&  a=a'\in A;\\ - p(a|w)\,p(a'|w), & a\neq a'; a,a'\in A.\end{array}\right.\end{equation}
The matrix defined in (\ref{def:Cw}) is used in the construction of $V$ and $V_\delta$ as follows
$$V:=\Sum\,\Leaves{V}\,\Sum' \ \ \mbox{and} \ \ V_\delta = \Sum\, \Leaves{V}_\delta\,\Sum'$$
where \begin{equation}\label{eq:defVstar}\Leaves{V} :=\sum_{w\in \Leaves{T}}e_we_w \otimes C_w,\end{equation}
and
\begin{equation}\label{eq:defVdeltastar}\Leaves{V}_\delta:=\sum_{w\in \Leaves{T}} e_we_w \otimes (\eps C_w + (1+\eps)\,(2\sqrt{|A|}+1)\,\gamma\,I_{A\times A}).\end{equation}

The following proposition stated the guarantees based on the continuity and typically assumptions.

\begin{proposition}\label{prop:Vbound} Whenever the continuity assumption and the typicality event hold, we have
\[\|V - V_n\|\leq 2\gamma \ \ \mbox{and} \ \ V_n\preceq V+ V_\delta\]
Therefore we have
\[\Pr{V_n\preceq V+ V_\delta} \geq 1-\alpha_\eps, \, \|V_\delta\| \leq \epsilon + (1+\eps)\,(2\sqrt{|A|}+1)\,\gamma,\mbox{ and } \Exp{}{\|V - V_n\|_\infty}\leq 2\gamma+\alpha_\eps\]
where the norm is the entrywise maximum.
\end{proposition}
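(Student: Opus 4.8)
\emph{Strategy.} The plan is to observe that, expressed through the operator $\Sum$, the quadratic variation $V_n$ is block diagonal over the leaves of $T$, reduce both claims to $|A|\times|A|$ matrix estimates on each leaf block, and then transport these through $\Sum$ using Lemma~\ref{lem:Sinitial} and the monotonicity $A\preceq B\Rightarrow \Sum A\Sum'\preceq\Sum B\Sum'$.

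\emph{Step 1: leaf block structure of $V_n$.} Using Proposition~\ref{prop:Sigma} together with $\Sigma_t=\Sigma_t^{1/2}(\Sigma_t^{1/2})'$ and the identity $(I-p_t{\bf 1}')\,{\rm diag}(p_t)\,(I-{\bf 1}p_t')={\rm diag}(p_t)-p_tp_t'$, and noting that exactly one leaf $w=T(X^{t-1}_{-\infty})$ is active at time $t$, one gets $\Sigma_t=\frac{1}{\pi(w)\,n}\,(\Sum\,e_we_w'\,\Sum')\otimes C_{p_t}$ with $w=T(X^{t-1}_{-\infty})$, where for a probability vector $q$ we write $C_q:={\rm diag}(q)-qq'$ (so $C_w=C_{p(\cdot\mid w)}$ is the matrix in (\ref{def:Cw})). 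Summing over $t$ and using that the number of $t$ with active leaf $w$ equals $N_{n-1}(w)$ yields $V_n=\Sum\,\Leaves{V}_n\,\Sum'$ with
\[\Leaves{V}_n=\sum_{w\in\Leaves{T}}e_we_w'\otimes \Leaves{V}_{n,w},\qquad \Leaves{V}_{n,w}:=\frac{1}{\pi(w)\,n}\sum_{t\,:\,T(X^{t-1}_{-\infty})=w}C_{p_t}.\]
Crucially $\Leaves{V}_n$, and also $\Leaves{V}$, $\Leaves{V}_\delta$ from \eqnref{defVstar}, \eqnref{defVdeltastar}, are all of the block form appearing in Lemma~\ref{lem:Sinitial}.

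\emph{Step 2: per-leaf estimates.} Fix $w\in\Leaves{T}$ and a time $t$ with active leaf $w$; then $X^{t-1}_{-\infty}\succeq w$, so the continuity assumption gives $\|p_t-p(\cdot\mid w)\|_\infty\le\gamma(w)\le\gamma$. From $C_q={\rm diag}(q)-qq'$ and $0\le q(a)\le1$ one obtains simultaneously an entrywise bound $\|C_{p_t}-C_w\|_\infty\le 2\gamma$ and an operator-norm bound $\|C_{p_t}-C_w\|_{\rm op}\le(2\sqrt{|A|}+1)\gamma$, hence $C_{p_t}\preceq C_w+(2\sqrt{|A|}+1)\gamma\,I_{A\times A}$ (the right side being PSD since $C_w\succeq0$). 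On the typicality event $N_{n-1}(w)/(\pi(w)n)\le1+\eps$, and averaging these displays over the $N_{n-1}(w)$ active times and scaling gives, on that event,
\[\Leaves{V}_{n,w}\preceq(1+\eps)C_w+(1+\eps)(2\sqrt{|A|}+1)\gamma\,I_{A\times A},\qquad \|\Leaves{V}_{n,w}-C_w\|_\infty\le 2\gamma,\]
the first being exactly the $w$-block of $\Leaves{V}+\Leaves{V}_\delta$ (the mild $O(\eps)$ slack in the second bound being lower order). Transporting through $\Sum$: the leaf-block inequality gives $\Leaves{V}_n\preceq\Leaves{V}+\Leaves{V}_\delta$ on the typicality event, so the PSD-monotone map $A\mapsto\Sum A\Sum'$ yields $V_n\preceq V+V_\delta$ there, whence $\Pr{V_n\preceq V+V_\delta}\ge1-\alpha_\eps$; since $V-V_n=\Sum(\Leaves{V}-\Leaves{V}_n)\Sum'$ with both terms in the block form of Lemma~\ref{lem:Sinitial}, $\|V-V_n\|_\infty\le\max_{w}\|C_w-\Leaves{V}_{n,w}\|_\infty\le2\gamma$ on the typicality event, and bounding the complementary event (probability $\le\alpha_\eps$) crudely gives $\Ex{\|V-V_n\|_\infty}\le2\gamma+\alpha_\eps$; applying Lemma~\ref{lem:Sinitial} to $\Leaves{V}_\delta$ with the zero matrix gives $\|V_\delta\|\le\max_w\|\eps C_w+(1+\eps)(2\sqrt{|A|}+1)\gamma I_{A\times A}\|_\infty\le\eps+(1+\eps)(2\sqrt{|A|}+1)\gamma$.

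\emph{Main obstacle.} The only real content is the per-leaf step: producing \emph{simultaneously} a PSD-order perturbation bound $C_{p_t}\preceq C_w+O(\sqrt{|A|}\,\gamma)I$ and a matching entrywise bound — which is precisely what dictates the shape of $\Leaves{V}_\delta$ — together with the bookkeeping that keeps the block-diagonal-over-$\Leaves{T}$ structure intact so that Lemma~\ref{lem:Sinitial} and the PSD-monotonicity of $\Sum(\cdot)\Sum'$ apply verbatim. The worst-case bound off the typicality event in the expectation estimate is the one remaining routine point to pin down.
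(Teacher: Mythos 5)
Your proof follows essentially the same route as the paper's: the block-diagonal-over-leaves structure of $V_n$, the per-leaf comparison $C_{p_t}\preceq C_w+(2\sqrt{|A|}+1)\gamma\,I$ together with the entrywise bound $\|C_{p_t}-C_w\|_\infty\le 2\gamma$ (this is exactly the paper's Lemma~\ref{lem:comparing}), the typicality scaling by $1+\eps$, and transport through $\Sum$ via Lemma~\ref{lem:Sinitial} and PSD-monotonicity of $A\mapsto\Sum A\Sum'$. The two loose ends you flag --- the $O(\eps)$ slack in the entrywise bound (the paper's own proof in fact obtains $\eps+2\gamma$) and the crude off-typicality control needed for the expectation bound --- are likewise left implicit in the paper, so they do not distinguish your argument from the published one.
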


\bibliographystyle{plain}
\bibliography{martingaleCLT}

\appendix

\section{Proof of the Lindeberg Theorem and its corollary}\label{sec:proof.lindeberg}

We prove our CLT for martingales, \thmref{lindeberg}, along with Corollary \ref{cor:bootstrap}. As a first step, we present in Section \ref{sec:lindebergspecial} the special case of the Theorem where the quadratic variation at time $n$ is deterministic. This proof follows an argument by Lalley. We then present in Section \ref{sec:Gaussian} a Gaussian perturbation lemma that we use to prove the general statement of \thmref{lindeberg} and the Corollary. We take the notation from Section \ref{SEC:NOTATION} for granted and recall that Taylor's formula implies that for all $x,s\in\R^d$,
\begin{equation}\label{taylor}
\frac{s}{\|s\|}\in\sS\Rightarrow \left|\varphi(x + s)  - \varphi(x) - \langle \nabla \varphi(x),s\rangle - \frac{1}{2}\tr(\nabla^2\varphi(x)\,ss')\right|\leq c_3\|s\|^3.\end{equation}

\subsection{A special case}\label{sec:lindebergspecial}

\begin{lemma}[Multidimensional Lindeberg Theorem, special case]\label{lem:CLT:martingale}Under the notation in Section \ref{SEC:NOTATION}, assume our martingale $(M_t,\sF_t)_{t=0}^n$ has quadratic variation $V_n=V$ at time $n$, with $V$ deterministic. Then for all functions $\varphi:\R^d\to \R$ that are three times differentiable, \[\left|\Ex{\varphi\left(M_n\right)} - \Ex{\varphi(N(0,V))}\right|\leq c_3 \sum_{t=1}^n\Ex{\|\xi_t\|^3 + \|\eta_t\|^3},\]
where we recall
\[c_3:= \sup_{x\in\R^d,\,w\in \sS}|\nabla^{3}\varphi(x)(w^{\otimes 3})|.\]\end{lemma}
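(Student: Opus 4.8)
The plan is to run the classical Lindeberg swapping argument, but in the multivariate martingale setting where we replace one increment at a time and control the error via Taylor's formula \eqref{taylor} together with the matching of the first two conditional moments of $\xi_t$ and $\eta_t$. Concretely, define the hybrid sums $S_k := \sum_{t=1}^{k}\eta_t + \sum_{t=k+1}^{n}\xi_t$ for $k=0,\dots,n$, so that $S_0 = M_n$ and $S_n = \sum_{t=1}^n \eta_t$. Since $V_n = V$ is deterministic and $\Exp{t-1}{\eta_t\eta_t'}=\Sigma_t$ with $\sum_t \Sigma_t = V_n = V$, the vector $\sum_{t=1}^n\eta_t$ is exactly $N(0,V)$ (it is a sum of conditionally Gaussian increments whose conditional covariances sum to a deterministic $V$; one checks its characteristic function equals that of $N(0,V)$ by conditioning successively). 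Hence it suffices to bound $\sum_{k=1}^n |\Ex{\varphi(S_{k-1})} - \Ex{\varphi(S_k)}|$.

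For the $k$-th term, write $R_k := \sum_{t=1}^{k-1}\eta_t + \sum_{t=k+1}^{n}\xi_t$, which is the common part, so $S_{k-1} = R_k + \xi_k$ and $S_k = R_k + \eta_k$. Apply the Taylor bound \eqref{taylor} with $x = R_k$ and $s=\xi_k$ (resp. $s=\eta_k$); this is legitimate because $\xi_k/\|\xi_k\|,\eta_k/\|\eta_k\|\in\sS$ by the standing assumption in Section~\ref{SEC:NOTATION}. This gives
\[
\left|\varphi(R_k+\xi_k) - \varphi(R_k) - \langle\nabla\varphi(R_k),\xi_k\rangle - \tfrac12\tr(\nabla^2\varphi(R_k)\,\xi_k\xi_k')\right|\le c_3\|\xi_k\|^3,
\]
and similarly for $\eta_k$. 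Subtract the two expansions and take expectations. The crucial point is that $R_k$ is $\sF$-measurable in a way that lets us condition: more precisely $R_k$ depends on $\xi_1,\dots,\xi_{k-1}$ (through $\eta_1,\dots,\eta_{k-1}$, which are $\sF_{k-1}$-measurable functions of $N_1,\dots,N_{k-1}$ and $\Sigma_1,\dots,\Sigma_{k-1}$) and on $\xi_{k+1},\dots,\xi_n$. To make the conditioning clean I would condition on $\sF_{k-1}$ together with the ``future'' increments and the auxiliary Gaussians $N_{k+1},\dots,N_n$; given all of this, $R_k$ is fixed, while $\xi_k$ and $\eta_k$ still have conditional mean $0$ and conditional second moment $\Sigma_k$ (the future increments and $N_k$ are independent of $N_{k+1},\dots,N_n$, and $\eta_k=\Sigma_k^{1/2}N_k$ with $\Sigma_k$ being $\sF_{k-1}$-measurable). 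Therefore the linear and quadratic terms cancel in expectation: $\Ex{\langle\nabla\varphi(R_k),\xi_k-\eta_k\rangle} = 0$ and $\Ex{\tr(\nabla^2\varphi(R_k)(\xi_k\xi_k'-\eta_k\eta_k'))} = 0$. What remains is $|\Ex{\varphi(S_{k-1})}-\Ex{\varphi(S_k)}| \le c_3\Ex{\|\xi_k\|^3 + \|\eta_k\|^3}$. Summing over $k$ and using the triangle inequality yields the claimed bound.

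The main obstacle — and the only genuinely delicate point — is setting up the conditioning so that all three facts hold simultaneously: (a) $R_k$ is measurable with respect to the conditioning $\sigma$-algebra, (b) $\Exp{}{\xi_k\mid\cdot}=0$ and $\Exp{}{\xi_k\xi_k'\mid\cdot}=\Sigma_k$ still hold, and (c) $\Exp{}{\eta_k\mid\cdot}=0$ and $\Exp{}{\eta_k\eta_k'\mid\cdot}=\Sigma_k$ still hold. The subtlety is that $\eta_k=\Sigma_k^{1/2}N_k$ where $\Sigma_k$ is $\sF_{k-1}$-measurable but $N_k$ is only assumed independent of $\sF_n$; one must check that conditioning additionally on the future Gaussians $N_{k+1},\dots,N_n$ (needed to freeze the $\eta$'s appearing in $R_k$ when $k$ is replaced — wait, actually $R_k$ contains $\eta_1,\dots,\eta_{k-1}$, which involve $N_1,\dots,N_{k-1}$, not the future ones, and the future part of $R_k$ is $\xi_{k+1},\dots,\xi_n$) does not disturb (b) and (c). Since $(N_t)$ are i.i.d. and independent of $\sF_n$, conditioning on $\sF_{k-1}\vee\sigma(\xi_{k+1},\dots,\xi_n)\vee\sigma(N_1,\dots,N_{k-1})$ leaves $(\xi_k,N_k)$ with the right joint conditional law: $N_k$ is still standard Gaussian and independent of $\xi_k$ given $\sF_{k-1}$, while the extra future increments $\xi_{k+1},\dots,\xi_n$ are conditionally independent of $(\xi_k,N_k)$ given $\sF_{k-1}$ because $\xi_k$ is $\sF_k$-measurable and the martingale's future given $\sF_k$ does not constrain $\xi_k$ backwards — this last independence is what needs the most care and is most transparently phrased as: for any bounded $g$, $\Ex{g(R_k)\langle\nabla\varphi,\xi_k\rangle} = \Ex{g(R_k)\Exp{k-1}{\langle\nabla\varphi,\xi_k\rangle}}$ only if $R_k$ is $\sF_{k-1}$-measurable, which it is \emph{not} (it contains future increments). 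So the correct route is to condition on the $\sigma$-algebra generated by \emph{everything except} $\xi_k$ and $N_k$, verify directly that under this conditioning $\xi_k$ retains conditional mean zero and conditional covariance $\Sigma_k$ (this is where the martingale structure is used: $\Ex{\xi_k\mid \sF_{k-1}, \xi_{k+1}^n} $ — one argues via reverse-martingale / exchangeability of the conditioning that adding the future does not break the mean-zero property, or more simply one re-derives it from the tower property by noting $\Sigma_k$ is $\sF_{k-1}$-measurable and using that $\Exp{k-1}{\xi_k\xi_k'}=\Sigma_k$ is exactly the conditional covariance, combined with the fact that the conditional law of $\xi_k$ given the future is governed only by $\sF_{k-1}$ up to the constraint $M_n$ fixed — this is the crux). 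A cleaner bookkeeping, and the one I would actually write, is to process the swaps from $t=n$ down to $t=1$ so that at the moment we swap $\xi_k\rightsquigarrow\eta_k$ the conditioning $\sigma$-algebra is $\sF_{k-1}\vee\sigma(N_1,\dots,N_{k})$: then $R_k = \sum_{t<k}\eta_t + \sum_{t>k}\xi_t$ involves only already-swapped (Gaussian, $\sF_n$-measurable via $N_{1..k-1}$) and not-yet-swapped (original $\xi_{k+1..n}$) pieces, and crucially $\xi_{k+1},\dots,\xi_n$ have been \emph{re-expressed}... this does not quite work either because future $\xi$'s are not functions of $N$'s. The honest resolution, which the authors attribute to Lalley, is to condition on $\sF_{k-1}$ and on $N_1,\dots,N_{k-1}$ only, treat $\sum_{t>k}\xi_t$ as a fixed measurable function composed after, and invoke the tower property using that $\sum_{t>k}\xi_t$ has conditional mean zero given $\sF_k$ — hence we first take $\Exp{k}{\cdot}$, which by the martingale property and \emph{a second-order} Taylor argument on the future block reduces $\varphi(R_k+\xi_k)$'s dependence on the future to $\varphi$ evaluated at $\sum_{t\le k}(\cdot) + \Exp{k}{\sum_{t>k}\xi_t} = \sum_{t\le k}(\cdot)$ up to error absorbed into the $\|\xi_t\|^3$ terms. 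This is more subtle than the independent case and I would devote the bulk of the write-up to it; once the conditioning is correctly arranged, the cancellation of first and second order terms and the resulting $c_3\sum_t\Ex{\|\xi_t\|^3+\|\eta_t\|^3}$ bound are immediate.
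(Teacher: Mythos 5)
Your overall plan (Lindeberg swapping with third-order Taylor control, exploiting that $\xi_k$ and $\eta_k$ have matching first and second conditional moments) is the same as the paper's, but your interpolation is oriented the wrong way, and the conditioning problem you yourself flag is never actually resolved. In your hybrid $S_k=\sum_{t\le k}\eta_t+\sum_{t>k}\xi_t$, the common part $R_k$ contains the \emph{future} martingale increments $\xi_{k+1},\dots,\xi_n$, which are correlated with $\xi_k$ through the filtration; there is no $\sigma$-algebra that simultaneously freezes $R_k$ and leaves $\xi_k$ with conditional mean $0$ and conditional covariance $\Sigma_k$. Your final patch --- first applying $\Exp{k}{\cdot}$ and Taylor-expanding $\varphi$ in the future block, with the error ``absorbed into the $\|\xi_t\|^3$ terms'' --- fails quantitatively: the second-order term of that expansion is of size $\tr\bigl(\nabla^2\varphi\cdot\sum_{t>k}\Sigma_t\bigr)$, i.e.\ of order $c_2\|V-V_k\|$, which is $O(1)$ and cannot be absorbed into $c_3\sum_t\Ex{\|\xi_t\|^3+\|\eta_t\|^3}$.

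The paper resolves exactly this by reversing the roles: the hybrid is $X_k=\sum_{t\le k}\xi_t+\sum_{t>k}\eta_t$, so the not-yet-swapped block is the Gaussian one. Conditioning on $\sG_{k,n}=\sigma(\sF_n\vee\sigma(N_k))$ freezes all the $\xi_t$, all the $\Sigma_t$ and $\eta_k$, while $N_{k+1},\dots,N_n$ remain independent; hence, given $\sG_{k,n}$, $\sum_{t>k}\eta_t\sim N(0,V-V_k)$ --- this is precisely where the hypothesis $V_n=V$ deterministic enters, making $V-V_k=\sum_{t>k}\Sigma_t$ an $\sF_{k-1}$-measurable matrix. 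Integrating out the Gaussian block replaces $\nabla\varphi(X_k^o)$ and $\nabla^2\varphi(X_k^o)$ by smoothed versions $h\bigl(\sum_{t<k}\xi_t,V-V_k\bigr)$ and $K\bigl(\sum_{t<k}\xi_t,V-V_k\bigr)$ that are $\sF_{k-1}$-measurable, and only then does the tower property with $\Exp{k-1}{\xi_k-\eta_k}=0$ and $\Exp{k-1}{\xi_k\xi_k'-\eta_k\eta_k'}=0$ kill the first- and second-order terms. You brushed against this when you suggested processing the swaps from $t=n$ down to $t=1$, but you mis-stated the resulting common block (it is $\sum_{t<k}\xi_t+\sum_{t>k}\eta_t$, not $\sum_{t<k}\eta_t+\sum_{t>k}\xi_t$) and abandoned the correct route. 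With that orientation and the conditioning on $\sF_n\vee\sigma(N_k)$, the rest of your argument (Taylor to third order, moment matching, summation) goes through as written.
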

\begin{proof} We follow the argument for the one-dimensional case by Lalley \cite{lalley2014martingale}.
Consider the intermediate terms:
\[X_k:=\sum_{t=1}^k\,\xi_t + \sum_{t=k+1}^n\,\eta_t,\, k=0,1,2,\dots,k\]
and the leave-one-out variants:
\[X^o_k:=\sum_{t=1}^{k-1}\,\xi_t + \sum_{t=k+1}^n\,\eta_t,\, k=1,2,\dots,k\]
Note that for each $1\leq k\leq n$,
\[X_k = X^o_k + \xi_k\mbox{ and }X_{k-1} = X^o_k + \eta_k.\]
Therefore,
\begin{eqnarray*}\left|\Ex{\varphi\left(M_n\right)} - \Ex{\varphi(N(0,V))}\right|&=& \left|\Ex{\varphi\left(\sum_{t=1}^n\xi_t\right)} - \Ex{\varphi\left(\sum_{t=1}^n\eta_t\right)}\right| \\
&=& \left|\Ex{\varphi(X_n)} - \Ex{\varphi(X_0)}\right| \\&=& \left|\sum_{i=1}^n\Ex{\varphi(X_k)} - \Ex{\varphi(X_{k-1})}\right|\\
&\leq &\sum_{i=1}^n\left|\Ex{\varphi(X^o_k+\xi_k)} - \Ex{\varphi(X^o_{k}+\eta_k)}\right|.\end{eqnarray*}
We will finish the proof by bounding each term in the above sum as follows:
\[\mbox{{\bf Claim:} }\forall k\in[n]\,:\,\left|\Ex{\varphi(X^o_k+\xi_k)} - \Ex{\varphi(X^o_{k}+\eta_k)}\right|\leq c_3\,\Ex{\|\xi_k\|^3 + \|\eta_k\|^3}.\]
To prove this, we fix a $k\in [n]$ from now on. Recall that $\xi_k/\|\xi_k\|,\eta_k/\|\eta_k\|\in \sS$ and use (\ref{taylor}) to deduce:
\begin{eqnarray}\nonumber \Ex{\varphi(X^o_k+\xi_k)} - \Ex{\varphi(X^o_{k}+\eta_k)}&= &\Ex{\langle\nabla\varphi(X^o_k),\xi_k - \eta_k\rangle} \\ \nonumber & & + \frac{1}{2}\Ex{\tr[\nabla^2\varphi(X^o_k)\,(\xi_k\xi_k' - \eta_k\eta_k')]}\\ \label{eq:twoterms}& & + r_k\mbox{ with } |r_k|\leq c_3\,\Ex{\|\eta_k\|^3 + \|\xi_k\|^3}.\end{eqnarray}
If we can show the first two terms in the RHS are zero, we will have obtained the claim. For this we will need to consider the joint distribution of $X_o^k, N_k$ and $\xi_k$. Define the $\sigma$-field \[\sG_{k,n}:=\sigma(\sF_n\vee \sigma(N_k)).\] Note that $N_k$, the matrices $\Sigma_i$ and the random variables $\xi_t$ are all $\sG_{k,n}$-measurable, whereas $N_{k+1},\dots,N_n$ are independent from $\sG_{k,n}$. One consequence of this is that, conditionally on $\sG_{k,n}$
\[X^o_k - \sum_{i=1}^{k-1}\xi_i = \sum_{t=k+1}^n\,\eta_t = \sum_{t=k+1}^n\,\Sigma_i^{1/2}\,N_i =_d N\left(0,\sum_{t=k+1}^n\,\Sigma_i\right) =_d N(0,V-V_k).\]
In particular, letting $\gamma$ denote the standard Gaussian measure over $\R^d$,
\begin{eqnarray*}\nonumber \Ex{\langle\nabla\varphi(X^o_k),\xi_k - \eta_k\rangle\mid\sG_{k,n}} &=& \langle\Ex{\nabla\varphi(X^o_k)\mid\sG_{k,n}},\xi_k - \eta_k\rangle\\
\nonumber &=&\langle h\left(\sum_{t=1}^{k-1}\xi_t ,V-V_k\right),\xi_k - \eta_k\rangle,\\
\nonumber\mbox{where } h(x,M)&:=& \int_{\R^d} \nabla\varphi\left(x + M^{1/2}x'\right)\,\gamma(dx').\end{eqnarray*}
Now, $h\left(\sum_{t=1}^{k-1}\xi_t ,V-V_k\right)$ is $\sF_{k-1}$-measurable, because it is a deterministic function of $\sF_{k-1}$-measurable objects $\sum_{t=1}^{k-1}\xi_t$ and $V-V_k$. We also have $\Exp{k-1}{\xi_k}=\Exp{k-1}{\eta_k}=0$. This means that \begin{eqnarray*}\Ex{\langle\nabla\varphi(W^o_k),\xi_k - \eta_k\rangle} &=&\Ex{\Ex{\langle\nabla\varphi(W^o_k),\xi_k - \eta_k\rangle\mid\sG_{k,n}}}\\ &=&
\Ex{\langle h\left(\sum_{t=1}^{k-1}\xi_t ,V-V_k\right),\xi_k - \eta_k\rangle}\\ &=&\Ex{\langle h\left(\sum_{t=1}^{k-1}\xi_t ,V-V_k\right),\Exp{k-1}{\xi_k - \eta_k}\rangle }= 0.\end{eqnarray*}
This takes care of the term containing the gradient in the RHS of \eqnref{twoterms}. We can use similar reasoning for the Hessian term:
\begin{eqnarray*}\Ex{\tr[\nabla^2\varphi(X_k^o)\,(\xi_k\xi_k' - \eta_k\eta_k')]\mid \sG_{k,n}} &=&\tr[\Ex{\nabla^2\varphi(X_k^o)\mid\sG_n}\,(\xi_k\xi_k' - \eta_k\eta_k')] \\ &=& \tr\left[K\left(\sum_{t=1}^{k-1}\xi_t,V-V_k\right)\,(\xi_k\xi_k' - \eta_k\eta_k')\right],\\
\mbox{where }K(x,M)&:=&\int_{\R^d} \nabla^2\varphi\left(x + M^{1/2}x'\right)\,\gamma(dx').\end{eqnarray*}
Since $K\left(\sum_{t=1}^{k-1}\xi_t,V-V_k\right)$ is $\sF_{k-1}$-measurable and $\Exp{k-1}{\xi_k\xi_k'} = \Exp{k-1}{\eta_k\eta_k'}$,
\begin{eqnarray*}\Ex{\tr\left[\nabla^2\varphi(X_k^o)\,(\xi_k\xi_k' - \eta_k\eta_k')\right]} &=& \Ex{\tr\left[K\left(\sum_{t=1}^{k-1}\xi_t,V-V_k\right)\,(\xi_k\xi_k' - \eta_k\eta_k')\right]} \\&=&
\Ex{\tr\left[K\left(\sum_{t=1}^{k-1}\xi_t,V-V_k\right)\,\Exp{k-1}{(\xi_k\xi_k' - \eta_k\eta_k')}\right]}\\&=&0.\end{eqnarray*}
We conclude that the first two terms in the RHS of \eqnref{twoterms} are $0$. This finishes the proof of the Claim and of the Theorem. \end{proof}

\subsection{Gaussian perturbation}\label{sec:Gaussian}

The next result will be needed in the remainder of the proof.

\begin{lemma}[Gaussian Perturbation Lemma]\label{lem:perturbV}Suppose $V,W\succeq 0$ are $d\times d$ symmetric matrices. Given a $C^3$ function $\varphi:\R^d\to\R$ with bounded second and third derivatives,
\[|\Ex{\varphi(N(0,V))} - \Ex{\varphi(N(0,W))}|\leq \frac{1}{2}\|V-W\|\,\left\|\int_0^1\,\Ex{\nabla^2\varphi(N(0,tW + (1-t)V))}\,dt\right\|_*.\]\end{lemma}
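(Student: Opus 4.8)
The plan is to interpolate between the two Gaussian laws along the straight\=/line path of covariances $M_t := tW + (1-t)V$, $t\in[0,1]$, and to differentiate. Concretely, I would let $Z\sim N(0,V)$ and $Z'\sim N(0,W)$ be independent and set $Z_t := \sqrt{1-t}\,Z + \sqrt{t}\,Z'$, so that $Z_t\sim N(0,M_t)$. Then $g(t):=\Ex{\varphi(Z_t)}$ satisfies $g(0)=\Ex{\varphi(N(0,V))}$ and $g(1)=\Ex{\varphi(N(0,W))}$, and it suffices to bound $|g(1)-g(0)|=\big|\int_0^1 g'(t)\,dt\big|$.

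For $t\in(0,1)$ the map $t\mapsto Z_t$ is differentiable, and using the quadratic growth of $\varphi$ (which follows from boundedness of $\nabla^2\varphi$, hence linear growth of $\nabla\varphi$) together with the fact that the covariances $M_t$ have uniformly bounded second moments, one may differentiate under the expectation to get
\[ g'(t) = \Ex{\Big\langle \nabla\varphi(Z_t),\ \tfrac{1}{2\sqrt t}\,Z' - \tfrac{1}{2\sqrt{1-t}}\,Z \Big\rangle}. \]
Next I would apply the multivariate Gaussian integration\=/by\=/parts identity $\Ex{X_i f(X)}=\sum_j\Sigma_{ij}\Ex{\partial_j f(X)}$ for $X\sim N(0,\Sigma)$, separately in the $Z'$ variable and in the $Z$ variable. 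This yields $\Ex{\langle\nabla\varphi(Z_t),Z'\rangle}=\sqrt t\,\Ex{\tr(W\,\nabla^2\varphi(Z_t))}$ and $\Ex{\langle\nabla\varphi(Z_t),Z\rangle}=\sqrt{1-t}\,\Ex{\tr(V\,\nabla^2\varphi(Z_t))}$ (using symmetry of the Hessian and of $V,W$). The singular prefactors cancel, leaving the clean expression $g'(t)=\tfrac12\,\Ex{\tr\big((W-V)\,\nabla^2\varphi(Z_t)\big)}$, which is bounded on $[0,1]$ because $\nabla^2\varphi$ is bounded.

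Integrating over $t$ and pulling the deterministic matrix $W-V$ out of the trace gives $g(1)-g(0)=\tfrac12\,\tr\big((W-V)\int_0^1\Ex{\nabla^2\varphi(Z_t)}\,dt\big)$, and the conclusion follows from the general dual\=/norm bound $|\tr(AB)|\le\|A\|\,\|B\|_*$ together with $\|W-V\|=\|V-W\|$. The one technical point requiring care is the justification of differentiation under the integral and of the continuity of $g$ at the endpoints $t=0,1$, where $\sqrt t$ and $\sqrt{1-t}$ fail to be differentiable; I would handle this by first establishing the formula for $g'$ on the open interval $(0,1)$, observing that the resulting expression extends continuously to $[0,1]$, and then using dominated convergence (via the uniform second\=/moment bound on $N(0,M_t)$ and the quadratic growth of $\varphi$) to conclude that $g$ itself is continuous on $[0,1]$, so that $g(1)-g(0)=\int_0^1 g'(t)\,dt$ is valid.
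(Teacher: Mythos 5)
Your argument is correct, but it follows a genuinely different route from the paper. The paper proves the same underlying identity
\[
\Ex{\varphi(N(0,V))} - \Ex{\varphi(N(0,W))} = \frac{1}{2}\tr\left((V-W)\int_0^1\Ex{\nabla^2\varphi(N(0,tW+(1-t)V))}\,dt\right)
\]
by a discrete Lindeberg-type swapping argument: it writes $N(0,V)$ and $N(0,W)$ as normalized sums of $m$ i.i.d.\ Gaussian blocks, replaces $V^{1/2}N_i/\sqrt{m}$ by $W^{1/2}N_i/\sqrt{m}$ one block at a time, Taylor-expands each swap to second order with an $\bigoh{m^{-3/2}}$ remainder (this is where the bounded third derivative is used), recognizes the resulting sum as a Riemann sum, and lets $m\to\infty$. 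You instead use the continuous interpolation $Z_t=\sqrt{1-t}\,Z+\sqrt{t}\,Z'$ and obtain $g'(t)=\tfrac12\Ex{\tr((W-V)\nabla^2\varphi(Z_t))}$ directly via Gaussian integration by parts (Stein's identity) in the $Z$ and $Z'$ variables; the $\sqrt t$ and $\sqrt{1-t}$ factors cancel exactly as you say, and your handling of the endpoints (derivative formula on $(0,1)$, boundedness of $g'$, continuity of $g$ on $[0,1]$ by dominated convergence) is sound. What each approach buys: your smart-path argument is shorter, avoids the Taylor remainder bookkeeping and the limit $m\to\infty$, and in fact never uses the third-derivative bound, only bounded $\nabla^2\varphi$ (plus enough regularity to differentiate under the expectation and apply Stein's identity, which your growth estimates supply); the paper's discrete argument trades that for a purely algebraic computation with no differentiation under the integral sign, at the cost of invoking the $C^3$ bound to control the error terms. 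Both conclude identically with the dual-norm inequality $|\tr(AB)|\leq\|A\|\,\|B\|_*$.
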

\begin{proof} In fact the proof gives the stronger identity
\[\Ex{\varphi(N(0,V))} - \Ex{\varphi(N(0,W))} = \frac{1}{2}\tr\left((V-W)\,\int_0^1\,\Ex{\nabla^2\varphi(N(0,tW + (1-t)V))}\,dt\right),\]
from which the Lemma follows via the definition of the dual norm.
To prove the identity, we will use an interpolation argument similar to the one in the proof of Lindeberg's theorem.

Let $N_1,\dots,N_m\in \R^d$ be i.i.d. random vectors with $N(0,I)$ distribution, with $m\in\N$ given. Define
\[X_0:= \frac{1}{\sqrt{m}}V^{1/2}\,\sum_{i=1}^mN_i,\]
\[X_k:=\frac{1}{\sqrt{m}}\,\left(W^{1/2}\sum_{i=1}^{k}N_i + V^{1/2}\sum_{i=k+1}^mN_i\right)\,\,(1\leq k\leq m),\]
and the leave-one-out variant \[X_k^o:= \frac{1}{\sqrt{m}}\,\left(W^{1/2}\sum_{i=1}^{k-1}N_i + V^{1/2}\sum_{i=k+1}^mN_i\right)\,\,(1\leq k\leq m).\]
We note $X_0=_d N(0,V)$ and $X_m=_dN(0,W)$ and:
\begin{eqnarray*}\Ex{\varphi(N(0,V))} - \Ex{\varphi(N(0,W))}&=&\Ex{\varphi(X_m) - \varphi(X_0)}\\ &=& \sum_{k=1}^m\Ex{\varphi(X_k) - \varphi(X_{k-1})}.\end{eqnarray*}
We note that $X_k^o$ is independent of $N_k$ and therefore
\begin{eqnarray*}\Ex{\varphi(X_k)} &=& \Ex{\varphi\left(X_k^o+\frac{W^{1/2}}{\sqrt{m}}N_k\right)} \\ &=& \Ex{\varphi\left(X_k^o\right)} +\Ex{\left\langle \nabla \varphi(X_k^o),\frac{W^{1/2}}{\sqrt{m}}N_k\right\rangle}\\ & & +  \frac{1}{2m}\Ex{\left\langle W^{1/2}N_k,\nabla^2 \varphi(X_k^o)W^{1/2}N_k\right\rangle} + \bigoh{m^{-3/2}}\\
&=&  \Ex{\varphi\left(X_k^o\right)} + \frac{1}{2m}\,\tr(W\,\Ex{\nabla^2\varphi(X_k^o)}) + \bigoh{m^{-3/2}}\\
&=& \Ex{\varphi\left(X_k^o\right)} + \frac{1}{2m}\,\tr(W\,\Ex{\nabla^2\varphi(X_k)}) + \bigoh{m^{-3/2}}.\end{eqnarray*}
Similarly,
\[\Ex{\varphi(X_{k-1})} = \Ex{\varphi\left(X_k^o+\frac{V^{1/2}}{\sqrt{m}}N_k\right)} = \Ex{\varphi\left(X_k^o\right)} + \frac{1}{2m}\,\tr(V\,\Ex{\nabla^2\varphi(X_k)}) + \bigoh{m^{-3/2}}.\]
Using the fact that $X_k=_d N(0,tW + (1-t)V)$ with $t=k/m$, we obtain:
\[\Ex{\varphi(X_k) - \varphi(X_{k-1})} = \frac{1}{2m}\tr((V-W)\,\Ex{\nabla^2\varphi(N(0,tW + (1-t)V))}) \mid_{t = \frac{k}{m}} + \bigoh{m^{-3/2}}.\]
We deduce:
\begin{eqnarray*}\Ex{\varphi(N(0,V))} - \Ex{\varphi(N(0,W))} &=& \left(\frac{1}{2m}\sum_{k=1}^m\Ex{\nabla^2\varphi(N(0,tW + (1-t)V))} \mid_{t = \frac{k}{m}}\right) \\ & &+ \bigoh{m^{-1/2}}.\end{eqnarray*}
The first term in the RHS of the above display is a Riemann sum, and the second is small when $m$ is large. Letting $m\to+\infty$ we obtain:
\[\Ex{\varphi(N(0,V))} - \Ex{\varphi(N(0,W))} = \frac{1}{2}\tr\left((V-W)\,\int_0^1\,\Ex{\nabla^2\varphi(N(0,tW + (1-t)V))}\,dt\right).\]\end{proof}

\subsection{The full Theorem \ref{thm:lindeberg}}\label{sec:lindebergfull}

\begin{proof} (of \thmref{lindeberg}) In the first step of our proof we will ``transform"~our martingale into one for which the quadratic variation at time $n+1$ is exactly equal to $V+ V_\delta$. This we accomplish by stopping $M_n$ to avoid that its variance overshoots and, if necessary, adding some noise back to avoid undershooting. The upshot of this transformation is that we can then apply the strategy of Lemma \ref{lem:CLT:martingale}. The change in quadratic variation will be addressed via the Gaussian Perturbation Lemma (Lemma \ref{lem:perturbV} above).

We will require i.i.d. standard normal random vectors $N_1,\dots,N_{n+1}\in\R^d$ defined on the same probability space as the martingale and independent from it. This is as in Section \ref{SEC:NOTATION}, except that we ask for one more vector $N_{n+1}$.

Define the $\{\sF_t\}_{t=0}^n$-stopping time:
\[\tau:=\inf\{m\leq n\,:\, \mbox{either $m=n$ or } V_{m+1}\not\preceq V +  V_\delta\}.\]
Note that $\tau$ is a stopping time because $V_{m+1}$ is always $\sF_m$-measurable. One can check that \[\tilde{M}_{k}:=M_{k\wedge \tau} = \sum_{t=1}^n\tilde{\xi}_t\] where $\tilde{\xi}_k:=\xi_k\Ind{\{\tau\geq k\}}$. Because $\{\tau\geq k\}\in\sF_{k-1}$, we can deduce that $\tilde{M}_k$ is also a martingale. Moreover,
\[\tilde{\Sigma}_k:=\Exp{k-1}{\tilde{\xi}_k\tilde{\xi}_k'} =  \Sigma_k\Ind{\{\tau\geq k\}}\] and the quadratic variation process satisfies
\[\tilde{V}_n = V_{n\wedge \tau}\preceq V +  V_\delta.\]
Notice that $M_n$ and $\tilde{M}_n$ are equal with probability $\geq 1-\alpha$:
\[\Pr{M_n\neq \tilde{M_n}} \leq \Pr{\tau<n} = \Pr{V_n\not\preceq V +  V_\delta} = \alpha.\]
This last expression means that
\begin{equation}\label{eq:firstdiffLindeberg}\left|\Ex{\varphi\left(M_n\right)} - \Ex{\varphi\left(\tilde{M}_n\right)}\right|\leq \alpha\,c_0.\end{equation}
We also see that $\tilde{M}_n$ never overshoots the target quadratic variation $V+ V_\delta$. It may, however, undershoot it. This we fix by defining
\[\tilde{\xi}_{n+1}:=\,(V+ V_\delta-\tilde{V}_n)^{1/2}N_{n+1}\] and considering $\tilde{M}_{n+1} := \tilde{M}_n + \tilde{\xi}_{n+1}$ instead of $\tilde{M}_n$. This is still a martingale and, because
\[\tilde{\Sigma}_{n+1} := \Exp{n}{\tilde{\xi}_{n+1}\tilde{\xi}_{n+1}'} = V+  V_\delta - \tilde{V}_n,\] The quadratic variation of $\tilde{M}_{n+1}$ is exactly equal to $V + V_\delta$. \lemref{perturbV} above, applied conditionally on $\sF_{n}$ (and with $W=0$), allows us to compare the distributions of $\tilde{M}_n$ and $\tilde{M}_{n+1}$:
\begin{eqnarray}\nonumber |\Ex{\varphi(\tilde{M}_n) - \varphi(\tilde{M}_{n+1})\mid\sF_n}|&\leq& (c_2\,\|V+ V_\delta - \tilde{V}_n\|)\wedge (2c_0)\\ \label{eq:seconddifLindeberg}\mbox{($\tilde{V}_n = V_n$ when $V_n\preceq V+ V_\delta$)}&\leq& (2c_0) \Ind{\{V_n\not\preceq V + V_\delta\}} + c_2 \| V_\delta\| + c_2\,\|V - {V}_n\|.\end{eqnarray}

We are now at a stage where we can apply the techniques of Lemma \ref{lem:CLT:martingale} (Lindeberg Theorem for martingales). Introduce \[\tilde{\eta}_k:=\left\{\begin{array}{ll} \Sigma_k^{1/2}\,N_k\Ind{\{\tau\geq k\}},&1\leq k\leq n; \\
 \\
 (V+ V_\delta-\tilde{V}_n)^{1/2}N_{n+1}, & k=n+1.\end{array}\right.\] This choice guarantees that
\[\forall 1\leq k\leq n+1 \,:\,\Exp{k-1}{\tilde{\eta}_k} = \Exp{k-1}{\tilde{\xi}_k}=0\mbox{ and }\Exp{k-1}{\tilde{\eta}_k\tilde{\eta}_k'} = \Exp{k-1}{\tilde{\xi}_k\tilde{\xi}_{k}'}.\]

Define $\tilde{X}_k:=\sum_{t=1}^k\,\tilde{\xi}_t + \sum_{t=k+1}^{n+1}\tilde{\eta}_k$. We wish to compare $\tilde{X}_{n+1}=\tilde{M}_{n+1}$ to $\tilde{X}_0=_dN(0,V+ V_\delta)$. Importantly, even though $\tilde{M}_{n+1}$ is a martingale with $n+1$ time steps, we always have $\tilde{\xi}_{n+1} = \tilde{\eta}_{n+1} = (V+ V_\delta - \tilde{V}_n)^{1/2}N_{n+1}$, so $\tilde{M}_{n+1}=\tilde{X}_{n+1} = \tilde{X}_n$. We conclude:
\[\left|\Ex{\varphi(\tilde{M}_{n+1}) - \varphi(N(0,V+ V_\delta))}\right| = \left|\Ex{\varphi(\tilde{X}_n) - \varphi(\tilde{X}_0)}\right|\leq \sum_{k=1}^n \left|\Ex{\varphi(\tilde{X}_k) - \varphi(\tilde{X}_{k-1})}\right|.\]
We emphasize that $\tilde{M}_{n+1}$ is a martingale whose quadratic variation {\em always} equals $V+ V_\delta$. This means we can proceed as in the Claim in the proof of the first Lindeberg Theorem and bound:
\[ \left|\Ex{\varphi(\tilde{X}_k) - \varphi(\tilde{X}_{k-1})}\right|\leq c_3\,(\Ex{\|\tilde{\xi}_k\|^3} + \Ex{\|\tilde{\eta}_k\|^3}).\]
Now recall that $\|\tilde{\xi}_k\|\leq \|\xi_k\|$ and $|\tilde{\eta}_k|\leq |\eta_k|$ always, and deduce
\begin{equation}\label{eq:thirddifLindeberg}|\Ex{\varphi(\tilde{M}_{n+1}) - \varphi(N(0,V + V_\delta))}|\leq c_3\,\sum_{k=1}^n(\Ex{\|\xi_k\|^3} + \Ex{\|\eta_k\|^3}).\end{equation}
Finally, another application of \lemref{perturbV} gives:
\[|\Ex{\varphi(N(0,V+ V_\delta)) - \varphi(N(0,V))}|\leq c_2\,\|V_\delta\|.\]
The proof follows once we combine the previous display with \eqnref{firstdiffLindeberg}, \eqnref{seconddifLindeberg} and \eqnref{thirddifLindeberg}.\end{proof}

\subsection{Bootstrap as a corollary}\label{sec:proof.bootstrap}

\begin{proof} (of Corollary \ref{cor:bootstrap}) This corollary follows directly from Theorem \ref{thm:lindeberg} and the Gaussian Perturbation Lemma \ref{lem:perturbV}.\end{proof}

\section{Gaussian approximation of the maximum}\label{sec:proof.maximum}

We prove below our Lindeberg result for the maximum (Theorem \ref{thm:clt:max:martingale}; Section \ref{sec:proof:clt:max}) and the anti-concentration result (Corollary \ref{cor:anti-concentration}; Section \ref{sec:proof:anti-concentration}).

\subsection{Proof of Theorem \ref{thm:clt:max:martingale}}\label{sec:proof:clt:max}

We prove here our Theorem for approximating the distribution of the maximum.

\begin{proof}
The proof follows builds upon the LSE approximation for the max used in the proof of Lemma 5.1 in \cite{chernozhukov2014clt}, Theorem \ref{thm:lindeberg} for martingales and the bootstrap Corollary \ref{cor:bootstrap}. For simplicity, we omit the argument for the law of $\widetilde Z_n$, as it follows directly from the Corollary.

For a Borel set $A \subset \mathbb{R}$, define its $\epsilon$-enlargement as $A^\epsilon=\{ t \in \mathbb{R} : {\rm dist}(t,A) \leq \epsilon\}$.  Define $\mu_{tj} = \Exp{t-1}{\xi_{tj}}$ (note that in our setting $\mu_{tj}=0$), $\bar \mu_j = \sum_{t=1}^n \mu_{tj}$ and $\bar\mu = (\mu_{j})_{j=1}^p$. For a vector $v \in \mathbb{R}^d$ we let $$F_{\beta}(v+\bar \mu)=F_{\beta,\bar \mu}(v)=\beta^{-1}\log\left(\sum_{j=1}^p\exp(\beta \{v_j+\bar \mu_{j}\})\right)$$

It follows by the definition of the LSE approximation of the maximum that
\begin{equation*}\label{LSE:prop} 0 \leq F_\beta(v) - \max_{1\leq j \leq d} v_{j} \leq \beta^{-1} \log d, \ \ \mbox{for all} \ v \in \mathbb{R}^{d}. \end{equation*}
By Lemma 5.1 in \cite{chernozhukov2015noncenteredprocesses}, for each Borel set $A\subset \mathbb{R}$ and $\bar\delta>0$, there exists a function $g \in C^3$, satisfying $\|g'\|_\infty \leq \bar\delta^{-1}$, $\|g''\|_\infty \leq \bar\delta^{-2}K$, $\|g'''\|_\infty\leq \bar\delta^{-3}K$ for a universal constant $K$, such that
$ 1_{A^{\bar\delta}}(t) \leq g(t) \leq 1_{A^{4\bar\delta}}(t)$ for all $t\in \mathbb{R}$.
We will choose $\beta$ so that $\bar \delta = \beta^{-1}\log (d)$, so that
\begin{equation}\label{eq:LSEprop2}0 \leq F_\beta(v) - \max_{1\leq j \leq d} v_{j} \leq \bar\delta, \ \ \mbox{for all} \ v \in \mathbb{R}^{d}.\end{equation}

Our Lindeberg machinery will be applied to the function:
\begin{equation}\label{eq:defphimaximum}\varphi:=g\circ F_\beta,\end{equation}
which (by virtue of (\ref{eq:LSEprop2})) satisfies:
\[\forall v\in \R^d\,:\,1_A(v)\leq 1_{A^{\bar\delta}}(F_\beta(v))\leq \varphi(v)\leq 1_{A^{4\bar\delta}}(F_\beta(v))\leq 1_{A^{5\bar\delta}}(v).\]

Therefore,
\begin{equation*}\Pr{Z\in A} - \Pr{\widetilde Z\in A^{5\bar\delta}}\leq \Ex{\varphi(M_n)} - \Ex{\varphi(H)}\end{equation*}
where $H$ has distribution $N(0,V)$. We conclude from Theorem \ref{thm:lindeberg} that:
\begin{equation}\label{eq:maxboundlindeberg}\Pr{Z\in A} - \Pr{\widetilde Z\in A^{5\bar\delta}}\leq c_0\alpha + 2c_2\Delta_n + c_3 \sum_{t=1}^n\Ex{\|\xi_t\|^3_\infty + \|\eta_t\|^3_\infty},\end{equation}
where the $c_k$ are defned right before the Theorem. Crucially, we have chosen to apply the $\ell_\infty$ norm on vectors in what follows.

It remains to bound $c_0$, $c_2$ and $c_3$. Since $0\leq g\leq 1$, we can bound $c_0\leq 2$. The other values require that we compute derivatives of $\varphi$, as
\[c_k:=\sup_{x\in\R^d,w\in\R^d,\|w\|_\infty\leq 1}|\nabla^k\varphi(x)(w^{\otimes k})|.\]
In fact, we {\em claim} that we can bound:
\[\mbox{\bf Claim: }c_2\leq C\,\frac{\log d}{\bar\delta^2} \mbox{ and }c_3\leq C\,\frac{\log^2d}{\bar\delta^3}.\]
This suffices to finish the proof by a direct plugin into (\ref{eq:maxboundlindeberg}).

To prove the claim, we will use the simple bounds:
\[c_2\leq \sup_{v\in\R^d}\sum_{j,k=1}^d|\partial_j\partial_k\varphi(v)| \mbox{ and }c_3\leq \sup_{v\in\R^d}\sum_{j,k,\ell=1}^d|\partial_j\partial_k\partial_\ell\varphi(v)|.\]
We also use some formulae from  \cite[Lemmas A.2 and A.4]{chernozhukov2013gaussian}. For indices $1\leq i,j,k,\ell\leq d$ and $v\in\R^d$, if we define
\begin{eqnarray*}\delta_{jk}&:=& 1_{\{j=k\}};\\
\pi_j(v) &:=& \frac{e^{\beta\,v_j}}{\sum_{i=1}^de^{\beta\,v_i}};\\
w_{jk}(v) &:=& \pi_j(v)\delta_{jk} - \pi_j(v)\pi_k(v);\\
q_{jk\ell}(v)&:=& \pi_j(v)\delta_{jk}\delta_{j\ell}  - \pi_j(v)\pi_\ell(v)\delta_{jk} \\ & & - \pi_j(v)\pi_k(v)(\delta_{j\ell}+\delta_{k\ell}) + 2\pi_j(v)\pi_k(v)\pi_\ell(v); \end{eqnarray*}
then (in our notation):
\begin{eqnarray*}
\partial_j\partial_k\varphi(v)&=&g''(F_\beta(v))\,\pi_j(v)\pi_k(v) + g'(F_\beta(v))\,\beta\,w_{jk}(v);\\
\partial_j\partial_k\partial_\ell\varphi(v)&=&g'''(F_\beta(v))\,\pi_j(v)\pi_k(v)\pi_\ell(v)  +\beta^2\,g'(F_\beta(v))\,q_{jk\ell}(v) \\ & & + \beta\,g''(F_\beta(v))\,(w_{jk}(v)\pi_\ell(v) + w_{j\ell}(v)\pi_k(v) + w_{k\ell}(v)\pi_j(v)).\end{eqnarray*}
The important thing about these formulae is that, for each fixed $v\in\R^d$, $\pi_j(v)$ is a probability vector and has $\ell_1$ norm equal to $1$. Therefore,
\[\sum_{j,k=1}^d|w_{jk}(v)|\leq C_0\mbox{ and }\sum_{j,k,\ell=1}^d|q_{jk\ell}(v)|\leq C_0\]
with $C_0$ independent of $v$, $\bar\delta$ or any other parameter of the problem. We may then prove the claim combining these estimates with the formulae for the partial derivatives of $\varphi$, our bounds on the derivatives of $g$, and the fact that $\beta=\log d/\bar\delta$.\end{proof}

\subsection{Proof of Anti-concentration}\label{sec:proof:anti-concentration}

\begin{proof} (of Theorem \ref{thm:max:anticoncentration})
Let $\bar \sigma_0 = \bar \sigma \log^{-1/2} d$, and $I_0 = \{ j \in [d] : \sigma_j \in (\bar\sigma_0,\bar\sigma]\}$. Since $X_j$ are centered Gaussian random variables, the density function of $X_j$ at $t\geq 0$, $f(t,\sigma_j)=(2\pi\sigma_j^2)^{-1/2}\exp(-t^2/\{2\sigma_j^2\})$, is decreasing in $t$. Moreover, $f(t,\sigma_j)$ is increasing in $\sigma_j$ if $|t|\geq \sigma_j$. Finally, for $t\geq \frac{4}{3}\bar\sigma$, $\epsilon \leq t/4$, we have $t-\epsilon\geq\frac{7}{8}t\geq \frac{7}{8}\Phi^{-1}(0.95)\geq\sqrt{2}\bar\sigma$. Therefore it follows that
%
$$\begin{array}{rl}
 \Pr{ |\max_{j\in [d]\setminus I_0}  X_j  - t | \leq \epsilon } & \leq \sum_{j \in [d]\setminus I_0} \Pr{ |  X_j  - t| \leq \epsilon } \\
 & \leq 2\epsilon \sum_{j \in [d]\setminus I_0} \exp{(-(t-\epsilon)^2/\{2\bar\sigma_j^2\})}/\{\sqrt{2\pi}\bar\sigma_{j}\} \\
 & \leq 2\epsilon \{d-|I_0|\} \exp{(- 2\bar\sigma^2/\{2\bar\sigma_{0}^2\})}/\{\sqrt{2\pi}\bar\sigma_{0}\}\\
 & \leq \{\epsilon/\bar\sigma_0\} \exp{(- \log d+\log (d-|I_0|))} \\
 & \leq \{\epsilon/\bar\sigma_0\}\end{array}
 $$
Therefore, we have that
$$\begin{array}{rl}
 \Pr{ |\max_{j\in[d]} X_j - t |\leq \epsilon } & \leq \Pr{ |\max_{j\in [d]\setminus I_0} X_j - t |\leq \epsilon }\\
  & + \Pr{ |\max_{j\in I_0} X_j - t |\leq \epsilon }\\
 & \leq \{\epsilon/\bar\sigma_0\} + \{2\epsilon/\bar\sigma_0\} \{2\sqrt{\log (2|I_0|)}+2\} \\
 \end{array}
 $$
 where the bound on the second term follows from Lemma 4.3 in \cite{chernozhukov2015noncenteredprocesses} since for each $j\in I_0$ we have $\sigma_j \geq\bar\sigma \log^{-1/2}d$ and (trivially) $|I_0|\leq d$.
\end{proof}

\section{Proofs of Section \ref{SEC:MM}}

\begin{proof} (of Theorem \ref{thm:MMinfty})
Let
$$\begin{array}{rl}
 Z = \max_{j\in [d]}|G_j|,\ \ Z^* = \max_{j\in [d]}|\sum_{k=1}^n g_k  \sigma_j^{-1}(r_{kj}-\mu_j)|, \\  \mbox{and} \ \ \widehat Z^* = \max_{j\in [d]}|\sum_{k=1}^n g_k \hat\sigma_j^{-1} (r_{kj}-\hat\mu_j)|\end{array}$$
where $G \in \mathbb{R}^d$ is a zero mean Gaussian vector with the covariance structure equal to $\frac{1}{n}\sum_{k=1}^n\Exp{k-1}{Z_kZ_k'}$. We let $\widehat{{\rm cv}}(\delta)$ denote the conditional quantile of $\widehat Z^*$ given the data and ${{\rm cv}^o}(\delta)$  the $(1-\delta)$-quantile of $Z$.

By (\ref{def:SCB}) it suffices to show
$$ \left| \Pr{ \max_{j\in[d]} \left| \hat\sigma^{-1}_j(\hat\mu_j - \mu)\right| > \widehat{\rm cv}(\delta)  } - (1-\delta) \right|\leq o(\delta). $$

Therefore, we have
$$\begin{array}{rl}
 \displaystyle \Pr{ \max_{j\in[d]} \left| \hat\sigma^{-1}_j(\hat\mu_j - \mu)\right| > \widehat{{\rm cv}}(\delta)  }& \displaystyle \leq_{(1)} \Pr{ \max_{j\in[d]} \left| \sigma^{-1}_j(\hat\mu_j - \mu)\right| > \widehat{{\rm cv}}(\delta) \min_{j\in[d]}|\hat \sigma_j/\sigma_j| } \\
 & \displaystyle \leq_{(2)} \Pr{ \max_{j\in[d]} \left| \sigma^{-1}_j(\hat\mu_j - \mu)\right| > {{\rm cv}}^o(\delta+\vartheta_n) } + o(\delta) \\
 & \displaystyle \leq_{(3)} \Pr{ \max_{j\in[d]} \left| G_j\right| > {{\rm cv}}^o(\delta+\vartheta_n) } + o(\delta) \\
  & \displaystyle \leq_{(4)} \delta + \vartheta_n + o(\delta)
 \end{array}$$
\noindent where  (1) follows by simple arithmetics, (2) from Step 2, and (3) from Step 3 below. (4) follows by definition of ${\rm cv}^o(\delta+\vartheta_n)$.

Step 2. Here we show that for $\vartheta_n = C\delta_n/\log d $ we have $$\Pr{\widehat{{\rm cv}}(\delta) \min_{j\in[d]}|\hat \sigma_j/\sigma_j| >  {{\rm cv}}^o(\delta+\vartheta_n)  } \geq  1-C(\psi_n^{1/2}+n^{-1/2}+\delta\delta_n^{1/4})$$

First note that
$$ \min_{j\in[d]}|\hat \sigma_j/\sigma_j| \geq 1 - \max_{j\in [d]} \left| \frac{\hat\sigma_j-\sigma_j}{\sigma_j} \right| $$

Uniformly over $j\in [d]$, we have
$$ \begin{array}{rl}
|\hat\sigma_j-\sigma_j| & = \left|\left\{\frac{1}{n}\sum_{k=1}^n \hat Z_{kj}^2\right\}^{1/2} - \left\{\frac{1}{n}\sum_{k=1}^n \Exp{k-1}{Z_{kj}^2}\right\}^{1/2}\right| \\
& \leq \left\{\frac{1}{n}\sum_{k=1}^n (\hat Z_{kj}-Z_{kj})^2\right\}^{1/2}\\
& + \left| \left\{\frac{1}{n}\sum_{k=1}^n Z_{kj}^2\right\}^{1/2}- \left\{\frac{1}{n}\sum_{k=1}^n \Exp{k-1}{Z_{kj}^2}\right\}^{1/2}\right|\\
& \leq C\delta \log^{-1}(dn) + \left|\frac{\frac{1}{n}\sum_{k=1}^n Z_{kj}^2-\Exp{k-1}{Z_{kj}^2}}{\left\{\frac{1}{n}\sum_{k=1}^n Z_{kj}^2\right\}^{1/2}+\left\{\frac{1}{n}\sum_{k=1}^n\Exp{k-1}{Z_{kj}^2}\right\}^{1/2}}\right|\\
& \leq C\delta_n \log^{-1}(dn) + \sigma^{-1}_j C\delta_n \log^{-1}(dn)
\end{array} $$
with probability $1-\psi_n$ by  Assumption \ref{assum:MMinfty}. Moreover, we have that $\min_{j\in[d]} \sigma_j \geq c$. Thus, for $\epsilon_n = C\delta_n/\log(dn)$ we have $$\Pr{ \min_{j\in[d]}|\hat\sigma_j/\sigma_j| < 1-\epsilon_n } \leq \psi_n  $$

Next we relate the quantiles ${{\rm cv}}^o(\delta+\vartheta_n)$ of $Z$ and the quantiles $\widehat{{\rm cv}}(\delta)$ of $\widehat Z^*$. Using the triangle inequality we have
$$\begin{array}{rl}
|\widehat Z^* - Z| & \leq |\widehat Z^* - Z^*| + |Z^* - Z| \\
& \leq \max_{j\in [d]} \left|\frac{1}{\sqrt{n}}\sum_{k=1}^n g_k\{\hat Z_{kj}-Z_{kj}\}\right| + |Z^* - Z| \\
& \leq C\log^{1/2}(dn) \max_{j\in [p]} \left\{\frac{1}{n}\sum_{k=1}^n (\hat Z_{kj} - Z_{kj})^2\right\}^{1/2}+ |Z^* - Z|\\
& \leq C\delta_n\log^{-1/2}(dn) + |Z^* - Z|\end{array}$$
with probability $1-\psi_n-2/n$. Indeed, $\frac{1}{\sqrt{n}}\sum_{k=1}^n g_k\{\hat Z_{kj}-Z_{kj}\}$ is a mean zero Gaussian random variable conditionally on the data, where the variance of each component is bounded by $\delta_n^2/\log^2d$ by Assumption \ref{assum:MMinfty} holds with probability $1-\psi_n$. This follows by Corollary 2.2.8 in \cite{van1996weak} and Proposition A.2.1 in \cite{van1996weak} (Borell-Sudakov-Tsirel’son inequality).

To bound the second term we note that both $Z$ and $Z^*$ are the maximum of Gaussian processes with covariance matrices $\Sigma$ and $\Sigma^{Z^*}$ satisfying with probability $1-2\psi_n$ that
$$\begin{array}{rl}
\Delta & :=  \max_{j,\ell \in [d]} |\Sigma^Z_{j\ell} - \Sigma^{Z^*}_{j\ell}| \\
& \leq  \max_{j,\ell \in [d]} | V_{j\ell} - V_{n,j\ell}   | + \max_{j,\ell \in [d]} | V_{nj\ell} - \frac{1}{n}\sum_{k=1}^nZ_{kj}Z_{k\ell}   |\\
& \leq  2\delta_n/\log^2(dn) =: \bar\Delta \end{array}$$
Define the event  $ E_n :=  \{\Delta \leq \bar\Delta\}$. Conditionally on $E_n$, by the perturbation Lemma \ref{lem:perturbV} we have
$$ \Pr{Z^* \in A \mid  E_n } \leq  \Pr{Z  \in A^{\bar\delta} } + C\bar\delta^{-2}\bar\Delta \log(d) $$
Let  $r_n := \bar\delta + C\delta_n\log^{-1/2}(dn)$. By a conditional version of Strassen's theorem, there is a version of $Z$ such that
$$ \wp_n^2 := \Pr{ |\widehat Z^* - Z| > r_n } \leq 2\psi_n + 2/n + C\bar\delta^{-2}\bar\Delta \log(d). $$
By Markov's inequality we have that with probability $1-\wp_n$
$$ \Pr{ |\widehat Z^* - Z| > r_n \mid E_n} \leq \wp_n$$
Using these relations, for some $\vartheta_n \geq \wp_n$, with probability $1-\wp_n$, we have
$$\begin{array}{rl}
\hat {\rm cv}(\delta) (1-\epsilon_n) & \geq_{(1)}  \{{\rm cv}^o(\delta+\wp_n) - r_n \}(1-\epsilon_n)\\
& =_{(2)} {\rm cv}^o(\delta+\vartheta_n) - \epsilon {\rm cv}^o(\delta+\vartheta_n)\\
& + \{{\rm cv}^o(\delta+\wp_n)-{\rm cv}^o(\delta+\vartheta_n) - r_n \}(1-\epsilon_n)\\
& \geq_{(3)} {\rm cv}^o(\delta+\vartheta_n) +  \left\{ \frac{\vartheta_n -\wp_n}{C\log^{1/2} d } - r_n \right\}(1-\epsilon_n)-\epsilon{\rm cv}^o(\delta+\vartheta_n)\\
& \geq_{(4)} {\rm cv}^o(\delta+\vartheta_n) \\
\end{array}
$$
where (1) follows by the definition of the quantile function, (2) by adding and subtracting $(1-\epsilon) {\rm cv}^o(\delta+\vartheta_n)$, (3) by Theorem \ref{thm:max:anticoncentration} and Assumption \ref{assump:C}(ii) which implies $\underline{\sigma} \geq c$, and (4) holds provided that
\begin{equation}\label{eq:rel-anticonSec}   r_n  + \epsilon_n\frac{{\rm cv}^o(\delta+\vartheta_n)}{1-\epsilon_n} \leq \frac{\vartheta_n -\wp_n}{C\log^{1/2} d }\end{equation}
where ${\rm cv}^o(\delta+\vartheta_n) \leq C \log^{1/2}(d/\delta)$. Thus (\ref{eq:rel-anticonSec}) yields
$$\Pr{  \widehat{{\rm cv}}(\delta)(1-\epsilon_n) < {{\rm cv}^o}(\delta+\vartheta_n) } \leq \wp_n $$

Next we define $\bar \delta = \{ c^{-2}\delta^{-2}\delta_n^{-1/2} \bar\Delta \log(d) \}^{1/2}$ and $\vartheta_n = \delta \delta_n^{1/8}$ and show their validity. This implies that $\wp_n^2 \leq 2\psi_n + 2/n + Cc\delta^2 \delta_n^{1/2}$ which in turn implies $\vartheta_n -\wp_n \geq \vartheta_n/2$ since $\psi_n^{1/2} \leq o(1) \delta\delta^{1/8}_n$.
Under these choices, (\ref{eq:rel-anticonSec}) holds provided that
$$\delta_n \leq o(1) \delta\delta_n^{1/8}, \ \  \epsilon_n = \frac{C\delta_n}{\log (dn)} \leq  \frac{o(1)\delta\delta_n^{1/8}}{\log(d/\delta)},\ \ \ \bar \delta  \leq \frac{c^{-1}\delta^{-1}\delta_n^{1/4}}{\log^{1/2}(dn)} \leq \frac{o(1)\delta\delta_n^{1/8}}{\log^{1/2} (d)}$$
which are implied by Assumption \ref{assum:MMinfty} and $n$ sufficiently large as the sequence $\delta_n\to 0$ is fixed.

Step 3. Here we show that
$$ \sup_{t \in \mathbb{R}} \left| \Pr{ \max_{j\in [d]} |G_j| > t } - \Pr{\max_{j\in[d]} \left| \frac{1}{\sqrt{n}}\sum_{k=1}^n Z_{kj} \right| > t } \right| \leq 2\psi_n + C\delta_n^{1/3} $$
where $G_j = \frac{1}{\sqrt{n}}\sum_{k=1}^n \eta_k$ is a Gaussian process with $\eta_k \sim N(0,\Exp{k-1}{Z_kZ_k'})$. By Theorem \ref{thm:clt:max:martingale} we have that
$$ \Pr{\max_{j\in[d]} \left| \frac{1}{\sqrt{n}}\sum_{k=1}^n Z_{kj} \right| > t }  \leq \Pr{ \max_{j\in [d]} |G_j| > t - C\bar\delta } + 2\alpha + \frac{C\Delta_n \log d}{\bar\delta^{2}} + \frac{C\log^2d}{\bar\delta^{3}n^{1/2-\rho}} $$
$$ \ \ \ \ \  \ \ \ \  \ \ \ \ \ \ \ \ \ \ \ \  \ \ \ \leq \Pr{ \max_{j\in [d]} |G_j| > t } +  C\bar\delta\sqrt{\log d} + 2\alpha + \frac{C\Delta_n \log d}{\bar\delta^{2}} + \frac{C\log^2d}{\bar\delta^{3}n^{1/2-\rho}} $$
where $\alpha$ and $\Delta_n$ are defined in Assumption \ref{assump:A}, and the second line used the anti-concentration bound in Corollary \ref{cor:anti-concentration} in the second line ($\underline{\sigma}=1$ since the $Z_{kj}$ are normalized). Note that Assumption \ref{assump:A} holds with $\alpha = \psi_n$, $V_\delta = \{\delta_n/\log^2 (dn)\}V$, and $$\Delta_n \leq \max_{j,\ell\in [d]}|V_{\delta,j\ell}| + \Exp{}{\max_{j,\ell\in[d]}|V_{n,j\ell}-V_{j\ell}|} \leq  \frac{\delta_n \max_{j,\ell}|V_{j\ell}|}{\log^2(dn)} + \frac{\delta_n }{\log^2(dn)} $$by Assumption \ref{assum:MMinfty}.  Moreover, by taking $\bar\delta = \delta_n^{1/3} / \sqrt{\log d}$ we have
$$ \Pr{\max_{j\in[d]} \left| \frac{1}{\sqrt{n}}\sum_{k=1}^n Z_{kj} \right| > t }  \leq \Pr{ \max_{j\in [d]} |G_j| > t } + C\delta_n^{2/3} + 2\psi_n + C\delta_n^{1/3} + C\delta_n $$
under Assumption \ref{assum:MMinfty}  the result follows. The other direction follows similarly.

\end{proof}

\section{Proofs of Section \ref{SECCONTEXT}}

\begin{proof} (of Theorem \ref{thm:ContextFinal})
By triangle inequality we have
$$
 \begin{array}{rl}
 \displaystyle \Pr{\bigcup_{w \in E_n, a\in A}\left\{ \frac{|\bar p(a\mid w) - \hat p(a\mid w)|}{\widehat{{\rm cf}}(w)} > 1  \right\}}& \displaystyle  \leq \Pr{ \max_{w \in T, a\in A}\frac{|\bar p(a\mid w) - \hat p(a\mid w)|}{\widehat{{\rm cf}}(w)} > 1  } \\
 & \displaystyle + \Pr{ \max_{\substack{w \in E_n \setminus T,\\ a\in A}}\frac{|\bar p(a\mid w) - \hat p(a\mid w)|}{\widehat{{\rm cf}}(w)} > 1  }
 \end{array}$$
By the choice of $\widehat {\rm cf}(w)$, $w\not\in T$, in (\ref{eq:old}) with $\delta/n$, it was shown in  Theorem 1 of \cite{belloni2017approximate} that
$$ \Pr{ \max_{\substack{w \in E_n \setminus T,\\ a\in A}}\frac{|\bar p(a\mid w) - \hat p(a\mid w)|}{\widehat{{\rm cf}}(w)} > 1  } \leq \delta/n $$

Next we focus on the other term. Let
$$\begin{array}{rl}
&  Z = \max_{\substack{w \in T,\\ a\in A}}|G_n(w,a)|,\\
&  Z^* = \max_{\substack{w \in T,\\ a\in A}}|\sum_{k=1}^n g_k  d_k(w,a)|, \ \mbox{and}\\
&\widehat Z^* = \max_{\substack{w \in T,\\ a\in A}}|\sum_{k=1}^n g_k  \hat d_k(w,a)|.\end{array}$$

Recall that $\widehat{{\rm cv}}(\delta)$ denotes the conditional quantile of $\widehat Z^*$ given the data $(X^n_{-\infty})$ and ${{\rm cv}^o}(\delta)$ denotes  the $(1-\delta)$-quantile of $Z$.

By definition we have
$$ \frac{|\bar p(a\mid w) - \hat p(a\mid w)|}{\widehat{{\rm cf}}(w)} = |M_n(w,a)|\frac{\sqrt{\pi(w)n}}{N_{n-1}(w)}\frac{\sqrt{N_{n-1}(w)}}{\widehat{{\rm cv}}(\delta)}=\frac{|M_n(w,a)|}{\widehat{{\rm cv}}(\delta)} \sqrt{\frac{\pi(w)n}{N_{n-1}(w)}} $$
Therefore, using the relation above and denoting ${{\rm cv}^o}(\delta)$ as the $1-\delta$ quantile of $Z$, we have
$$\begin{array}{rl}
 & \displaystyle \Pr{ \max_{\substack{w \in T,\\ a\in A}}\frac{|\bar p(a\mid w) - \hat p(a\mid w)|}{\widehat{{\rm cf}}(w)} > 1  } \\
 & \displaystyle  \leq \Pr{ \max_{\substack{w \in T,\\ a\in A}}\frac{|M_n(w,a)|}{\widehat{{\rm cv}}(\delta)} \sqrt{\frac{\pi(w)n}{N_{n-1}(w)}} > 1  } \\
 & \displaystyle \leq_{(1)} \Pr{ \max_{\substack{w \in T,\\ a\in A}}|M_n(w,a)|  > \widehat{{\rm cv}}(\delta)\sqrt{1-\epsilon} } + \alpha_\eps \\
 & \displaystyle \leq_{(2)} \Pr{ \max_{\substack{w \in T,\\ a\in A}}|M_n(w,a)|  > {{\rm cv}^o}(\delta+\vartheta_n) } + \alpha_\eps + C\delta/\log n \\
 & \displaystyle \leq_{(3)} \Pr{ \max_{\substack{w \in T,\\ a\in A}}|G_n(w,a)|  > {{\rm cv}^o}(\delta+\vartheta_n)} + \alpha_\eps + o(1) \\
& \leq_{(4)} \delta + \vartheta_n + \alpha_\eps + o(1) \end{array}$$
where (1) follows from the typicality assumption, (2) follows from Step 2 below where $\vartheta_n = o(\delta)$, (3) by Step 3 below and (4) by definition of the quantile.\\

\underline{Step 2.} We show that $\Pr{  \widehat{{\rm cv}}(\delta)\sqrt{1-\epsilon} < {{\rm cv}^o}(\delta+\vartheta_n) } \leq \delta/\sqrt{n} + \delta/\log n$ for $\vartheta_n = \delta/\log n = o(1)$.

We have that
\begin{equation}\label{eq:triStart}
\begin{array}{rl}
 |\widehat Z^* - Z | & \leq |\widehat Z^* - Z^* | + |Z^* - Z |\\
& = \max_{\substack{w \in T,\\ a\in A}}|\sum_{k=1}^n g_k \{ \hat d_k(w,a)- d_k(w,a)\}| + |Z^* - Z |\\
 \end{array}
\end{equation}
Regarding the first term, note that $\sum_{k=1}^n g_k \{ \hat d_k(w,a)- d_k(w,a)\}$ is a zero-mean Gaussian random variable with variance $\sum_{k=1}^n \{ \hat d_k(w,a)- d_k(w,a)\}^2$. Note that under the typicality, with probability $1-\alpha_\eps$, uniformly over $w\in \Leaves{T}$, $a\in A$ we have
$$\begin{array}{rl}
 | \hat d_k(w,a)- d_k(w,a) | & \leq \left\{\frac{\gamma+|\bar p(a\mid w) - \hat p(a\mid w)|}{\sqrt{\pi(w)n}}+ \left|\sqrt{\frac{N_{n-1}(w)}{\pi(w)n}}-1\right|\frac{1}{\sqrt{N_{n-1}(w)}}\right\}\Ind{\{X^{k-1}_{k-|w|}=w\}}\\
 & \leq \left\{\gamma+|\bar p(a\mid w) - \hat p(a\mid w)|+\epsilon/(1-\epsilon)\right\}\Ind{\{X^{k-1}_{k-|w|}=w\}}/\sqrt{\pi(w)n}\\
\end{array}. $$
However, by typicality, (\ref{eq:old}) and Theorem 1 of \cite{belloni2017approximate} we have that with probability $1-\delta^2/n-\alpha_\eps$
\begin{equation}\label{eq:old2}
\begin{array}{rl}
|\bar p(a\mid w) - \hat p(a\mid w)| & \leq \sqrt{\frac{4}{N_{n-1}(w)}\left(2\log(2+\log_2 N_{n-1}(w))+\log(n^2|A|/\delta)\right) }\\
& \leq \sqrt{\frac{4(1+\epsilon)}{\pi(w)n}\left(2\log(2+\log_2 ((1+\epsilon)\pi(w)n))+\log(n^3|A|/\delta)\right) }\\
& \leq \sqrt{\frac{16}{\pi(w)n}\log(n^3|A|/\delta^2) }\\
\end{array}
\end{equation}

Therefore, with probability $1-2\alpha_\eps-\delta^2/n$, uniformly over $w\in T, a\in A$, we have
\begin{equation}
\begin{array}{rl}
\displaystyle \sum_{k=1}^n \{ \hat d_k(w,a)- d_k(w,a)\}^2& \leq \left\{\gamma +\sqrt{\frac{16}{\pi(w)n}\log(n^3|A|/\delta) } +\frac{\epsilon}{1-\epsilon} \right\}^2 {\displaystyle \sum_{k=1}^n} \frac{\Ind{\{X^{k-1}_{k-|w|}=w\}}}{\pi(w)n} \\
& = \left\{\gamma +\sqrt{\frac{16}{\pi(w)n}\log(n^3|A|/\delta^2) } +\frac{\epsilon}{1-\epsilon} \right\}^2 {\displaystyle  \frac{N_{n-1}(w)}{\pi(w)n}} \\
& \leq (1+\eps)\left\{\gamma +\sqrt{\frac{16}{\pi(w)n}\log(n^3|A|/\delta^2) } +\frac{\epsilon}{1-\epsilon} \right\}^2 =: \varepsilon_n^2(w).
\end{array}
\end{equation}
In turn, by Corollary 2.2.8 in \cite{van1996weak},   with probability $1-2\alpha_\eps-\delta^2/n$ we have
$$ \mathcal{E}_{X^n_{-\infty}}:=\Exp{}{\max_{w\in \Leaves{T},a\in A}\left|\sum_{k=1}^n g_k \{ \hat d_k(w,a)- d_k(w,a)\}\right| \mid X^{n}_{-\infty} } \leq C \sqrt{\log d} \max_{w\in \Leaves{T}}\varepsilon_n(w) $$
and, conditional on the same event with probability at least $1-\alpha_\eps-\delta^2/n$, by Proposition A.2.1 in \cite{van1996weak} (Borell-–Sudakov-–Tsirel'son inequality), we have
$$ \Pr{ \max_{ \substack{w \in T,\\ a\in A}}\left|\sum_{k=1}^n g_k \{ \hat d_k(w,a)- d_k(w,a)\}\right| > \mathcal{E}_{X^n_{-\infty}} +  \max_{w\in \Leaves{T}}\varepsilon_n(w) \sqrt{2\log\left(\frac{n}{\delta^2}\right)} \mid X^{n}_{-\infty} } \leq \frac{2\delta^2}{n}. $$

To bound the other term in (\ref{eq:triStart}) we note that $Z^*$ and $Z$ are each the maximum of a Gaussian random vector so we will apply the (Gaussian) perturbation Lemma. Let
$$ \Delta = \max_{\substack{w,w'\in \Leaves{T}\\ a,a'\in A}} \left| \sum_{k=1}^n d_k(w,a)d_k(w',a') -  \sum_{k=1}^n \Exp{k-1}{d_k(w,a)d_k(w',a')}\right|$$
and $U_k(w,a):= \{\Ind{\{X_k=a\}}-p(a\mid X^{k-1}_{-\infty})\}\Ind{\{X^{k-1}_{k-|w|-1}=w\}}$. We have
$$ \begin{array}{rl}
\Pr{\Delta > t } & \leq \displaystyle d \max_{\substack{w,w'\in \Leaves{T}\\ a,a'\in A}} \Pr{  \left| \sum_{k=1}^n d_k(w,a)d_k(w',a') -  \sum_{k=1}^n \Exp{k-1}{d_k(w,a)d_k(w',a')}\right| > t }\\
&\leq \displaystyle d \max_{\substack{w\in \Leaves{T}\\ a,a'\in A}} \Pr{  \left| \sum_{k=1}^n U_k(w,a)U_k(w,a') - \Exp{k-1}{U_k(w,a)U_k(w,a')} \right| > t n \pi(w) }\\
& \leq 2d\exp(- \frac{1}{4}t^2n\min_{w\in\Leaves{T}}\pi(w))
\end{array}$$
for any $$t \leq \bar \Delta := \frac{2\sqrt{\log(2nd/\delta^2)}}{n^{1/2}\min_{w\in\Leaves{T}}\pi^{1/2}(w)}$$
where the last  step follows from Lemma 1.6 in \cite{LedouxTalagrandBook} (with $b^2 = n\pi(w)$, $a=1$ and $at/b^2\leq \log(3/2)$) under the condition that $\max_{w\in \Leaves{T}} 2\sqrt{\log(2nd/\delta^2)}n^{-1/2}\pi^{-1/2}(w) \leq \log(3/2)$ which is implied by Assumption \ref{assump:C}(i) since $\rho \in (0,1)$.
 Therefore with probability at least $1-\delta^2/n$ we have $$\Delta \leq \bar \Delta$$

Then, conditionally on $\{\Delta \leq \bar\Delta\}$ we have by the perturbation Lemma \ref{lem:perturbV} we have
$$ \Pr{Z^* \in A \mid  X^n_{-\infty}} \leq  \Pr{Z  \in A^{\bar\delta} } + C\bar\delta^{-2}\bar\Delta \log(d) $$

Next we collect the bounds, letting $$r_n := \bar\delta + \{C \sqrt{\log d} + \sqrt{2\log(n/\delta^2)}\} \max_{w\in \Leaves{T}}\varepsilon_n(w).$$ Therefore, by a conditional version of Strassen's theorem, there is a version of $Z$ such that
$$ \wp_n^2 := \Pr{ |\widehat Z^* - Z| > r_n } \leq 2\alpha_\eps + 4\delta^2/n + C\bar\delta^{-2}\bar\Delta \log(d). $$
Then by Markov's inequality, with probability $1-\wp_n$, we have
$$ \Pr{ |\widehat Z^* - Z| > r_n \mid X^n_{-\infty}} \leq \wp_n$$

Then, for some $\vartheta_n \geq \wp_n$, with probability $1-\wp_n$, we have
$$\begin{array}{rl}
\hat {\rm cv}(\delta) \sqrt{1-\epsilon} & \geq_{(1)}  \{{\rm cv}^o(\delta+\wp_n) - r_n \}\sqrt{1-\epsilon}\\
& \geq_{(2)} {\rm cv}^o(\delta+\vartheta_n) - \epsilon {\rm cv}^o(\delta+\vartheta_n)\\
& + \{{\rm cv}^o(\delta+\wp_n)-{\rm cv}^o(\delta+\vartheta_n) - r_n \}\sqrt{1-\epsilon}\\
& \geq_{(3)} {\rm cv}^o(\delta+\vartheta_n) +  \left\{ \frac{\vartheta_n -\wp_n}{C\log d } - r_n \right\}\sqrt{1-\epsilon}-\epsilon{\rm cv}^o(\delta+\vartheta_n)\\
& \geq_{(4)} {\rm cv}^o(\delta+\vartheta_n) \\
\end{array}
$$
where (1) follows by the definition of the quantile function, (2) since $\epsilon \geq 1-\sqrt{1-\epsilon}$, (3) by Theorem \ref{thm:max:anticoncentration} and Assumption \ref{assump:C}(ii) which implies $\bar\sigma \geq c$, and (4) holds provided that
\begin{equation}\label{eq:rel-anticon}   r_n  + \epsilon\frac{{\rm cv}^o(\delta+\vartheta_n)}{\sqrt{1-\epsilon}} \leq \frac{\vartheta_n -\wp_n}{C\log d }\end{equation}
where ${\rm cv}^o(\delta+\vartheta_n) \leq C\sqrt{ \log(d/\delta)}$. Thus (\ref{eq:rel-anticon}) yields
$$\Pr{  \widehat{{\rm cv}}(\delta)\sqrt{1-\epsilon} < {{\rm cv}^o}(\delta+\vartheta_n) } \leq \wp_n $$

To show (\ref{eq:rel-anticon}) we will take $\bar \delta = \left\{ c^{-2}\delta^{-2}\bar\Delta \log^2 n \right\}^{1/2}$ and $\vartheta_n = \delta/\log n$. This implies that $\wp_n^2 \leq 2\alpha_\eps + 4\delta^2/n + c\delta^2/\log^2n$ and that $\vartheta_n -\wp_n \geq \vartheta_n/2$.
Relation (\ref{eq:rel-anticon}) holds provided that
$$ \epsilon \leq  \frac{o(1)\delta}{\log (d) \log^{1/2}(d/\delta)\log (n)},\ \ \ \left\{ c^{-2}\delta^{-2} \frac{\sqrt{\log(nd/\delta^2)}}{n^{1/2}\min_{w\in\Leaves{T}}\pi^{1/2}(w)} \log^2 n \right\}^{1/2} \leq \frac{o(1)\delta}{\log (d) \log n}$$
$$ \alpha_\eps^{1/2} \leq \frac{o(1)\delta}{\log n}, \ \ \sqrt{\log(dn/\delta)}\log(d)\left\{ \gamma + \epsilon + \frac{\sqrt{\log(n|A|/\delta)}}{n^{1/2}\min_{w\in T}\pi^{1/2}(w)}\right\} \leq \frac{o(1)\delta}{\log n} $$
which are implied by Assumption \ref{assump:C}. \\

\underline{Step 3.} Here we show that
$$ \sup_{t \in \mathbb{R}} \left\{ \Pr{ \max_{\substack{w \in T,\\ a\in A}}|M_n(w,a)|  > t } -  \Pr{ \max_{\substack{w \in T,\\ a\in A}}|G_n(w,a)|  > t } \right\}\leq C\delta/\log n $$

We will apply Theorem \ref{thm:clt:max:martingale}. By Proposition \ref{prop:Vbound} we can take $\alpha = \alpha_\eps$, $\Delta_n = 4\gamma|A| + \alpha_\eps$. Then, by Theorem \ref{thm:clt:max:martingale}, for any ${t \in \mathbb{R}}$ we have
$$\Pr{ \max_{\substack{w \in T,\\ a\in A}}|M_n(w,a)|  > t }  \leq \Pr{ \max_{\substack{w \in T,\\ a\in A}}|G_n(w,a)|  > t-C\bar \delta}
 + 2\alpha +  \frac{C\log d}{\bar\delta^2} \Delta_n $$
 $$ +\frac{C\log^2d}{\bar \delta^3} {\displaystyle \sum_{t=1}^n }\Ex{\|\xi_t\|_\infty^3 + \|\eta_t\|_\infty^3}$$
where $G_n = \frac{1}{\sqrt{n}}\sum_{t=1}^n \eta_t$ is a Gaussian process where $\eta_t \sim N(0,\Sigma_t)$, $\Sigma_t = \Exp{t-1}{\xi_t\xi_t'}$, $d=|T|\cdot |A|$.

By Proposition \ref{prop:Sigma} we have that
$$ {\displaystyle \sum_{t=1}^n }\Ex{\|\xi_t\|_\infty^3 + \|\eta_t\|_\infty^3} \leq \frac{C}{n^{1/2}}\sum_{w\in \Leaves{T}} \pi^{-1/2}(w) \leq Cn^{-1/2+\rho}$$

Therefore we have
$$
\begin{array}{rl}
\Pr{ \max_{\substack{w \in T,\\ a\in A}}|M_n(w,a)|  > t }  & \leq  \Pr{ \max_{\substack{w \in T,\\ a\in A}}|G_n(w,a)|  > t-C\bar \delta} \\
 & + 2\alpha_\eps + C\bar\delta^{-2}\{4\gamma|A|+\alpha\}\log (d) + Cn^{-1/2+\rho} \bar\delta^{-3}\log^2(d) \\
  & \leq  \Pr{ \max_{\substack{w \in T,\\ a\in A}}|G_n(w,a)|  > t} \\
 & + 2\alpha_\eps + C\bar\delta^{-2}\{4\gamma|A|+\alpha\}\log (d) + Cn^{-1/2+\rho} \bar\delta^{-3}\log^2(d) \\
 & + C\bar \delta C\log(d)\\
 \end{array}
$$
where we used Theorem \ref{thm:max:anticoncentration}. The result follows under Assumption \ref{assump:C} which implies that for $\bar\delta = o(1)\delta/\{ \log(d) \log n\}$ we have
$$  2\alpha_\eps + C\bar\delta^{-2}\{4\gamma|A|+\alpha_\eps\}\log (d) + Cn^{-1/2+\rho} \bar\delta^{-3}\log^2(d) + C\bar \delta \log(d) \leq \delta/\log n.$$

\end{proof}

\section{Proofs of Auxiliary Lemmas}

\begin{proof} (of Lemma \ref{lem:Sinitial}) We need to bound the largest magnitude of a matrix entry of $\Sum\,(Q-\tilde{Q})\,\Sum'$. This operator acts on $\R^T\otimes \R^A$, and its entries are indexed by pairs $((x,a),(y,b))\in (T\times A)^2$.

One first remark is that such an entry can be nonzero if and only if $x\preceq y$ or vice-versa. To see this, let us consider the effect of applying $\Sum$ to $Q$ and $\tilde{Q}$. By the formulae for the $\Sum$ operator,
\begin{eqnarray*}\Sum\,Q\,\Sum'&=&\sum_{w\in \Leaves{T}}\sum_{x,y\in {\rm Path}_T(w)}\frac{\pi(w)}{\sqrt{\pi(x)\pi(y)}}\,e_xe_y'\otimes Q_w \\ &=&\sum_{x,y\in T\times T}\frac{e_xe_y'}{\sqrt{\pi(x)\pi(y)}}\otimes \sum_{w\in \Leaves{T}:x,y\in {\rm Path}_T(w)}\pi(w)\,Q_w.\end{eqnarray*}
We see at once that in order for the entry $Q((x,a),(y,b))$ to be nonzero one needs that there be some leaf $w$ such that $x,y\in {\rm Path}_T(w)$, in which case $x,y$ are linearly ordered. The same property also holds for $\tilde{Q}$.

We have seen that entries $((x,a),(y,b))$ of $\Sum\,(Q-\tilde{Q})\,\Sum'$ are zero unless $x\preceq y$ or vice-versa. We now wish to bound the magnitude of the nonzero entries (and thus the $\ell_\infty$ norm) of this matrix. So we consider $((x,a),(y,b))$ with $x\preceq y$ (without loss of generality). We need to show that
\[\mbox{\bf Want: } |[\Sum\,(Q-\tilde{Q})\,\Sum']((x,a),(y,b))|\leq \max_{w\in \Leaves{T}:x,y\in {\rm Path}_T(w)}\|Q_w - \tilde{Q}_w\|.\]
To prove this, we must write the LHS from the definition. Note that a leaf $w$ has $x,y\in {\rm Path}_T(w)$ if and only if $w\succeq y$. We deduce that
\begin{eqnarray*}|[\Sum\,(Q-\tilde{Q})\,\Sum']((x,a),(y,b))| &=& \left|\frac{\sum_{w\in \Leaves{T}:x,y\in {\rm Path}_T(w)}\pi(w)\,(Q_w(a,b) - \tilde{Q}_w(a,b))}{\sqrt{\pi(x)\,\pi(y)}}\right|\\ &\leq & \frac{\sum_{w\in \Leaves{T}:x,y\in {\rm Path}_T(w)}\pi(w)}{\sqrt{\pi(x)\,\pi(y)}}\,\max_{w\in \Leaves{T}}\|Q_w - \tilde{Q}_w\|.\end{eqnarray*}
To finish, we will show that
\begin{equation}\label{eq:sumpis}\frac{\sum_{w\in \Leaves{T}:x,y\in {\rm Path}_T(w)}\pi(w)}{\sqrt{\pi(x)\,\pi(y)}}\leq 1.\end{equation}
Recall that $T$ is a complete tree. Therefore, the event that $X^{-1}_{-|y|}=y$ coincides with $T(X^{-1}_{-\infty})\succeq y$, which is the same as saying that $T(X^{-1}_{-\infty})=w$ for some leaf $w\succeq y$. We conclude:
\[\pi(y):= \Pr{X^{-1}_{-|y|}=y} = \sum_{w\in \Leaves{T}\,:\,w\succeq y}\Pr{T(X^{-1}_{-\infty})=w} = \sum_{w\in \Leaves{T}\,:\,w\succeq y}\pi(w).\]
Since $y\in {\rm Path}_T(w)$ is the same as $w\succeq y$,
\[\frac{\sum_{w\in \Leaves{T}:x,y\in {\rm Path}_T(w)}\pi(w)}{\sqrt{\pi(x)\,\pi(y)}} \leq  \sum_{w\in \Leaves{T}\,:\,w\succeq y}\pi(w)  = \sqrt{\frac{\pi(y)}{\pi(x)}}.\]
But also have that $\pi(x)\geq \pi(y)$: $x\preceq y$ implies $\{X^{-1}_{-|x|}=x\}\supset \{X^{-1}_{-|y|}=y\}$. This shows that \eqnref{sumpis} holds and finishes the proof.\end{proof}

\begin{proof} (of Proposition \ref{prop:Sigma}) We have the explicit formulae
\[\Leaves{\xi}_t(w,a) = \left(\Ind{\{T(X^{t-1}_{-\infty})=w\}}\,\frac{e_w}{\sqrt{\pi(w)n}}\right)\otimes (\Ind{\{X_t=a\}} - p(a|X^{t-1}_{-\infty})),\]
and
\[\xi_t(x,a) = \left(\Ind{\{T(X^{t-1}_{-\infty})\succeq x\}}\,\frac{e_x}{\sqrt{\pi(x)n}}\right)\otimes (\Ind{\{X_t=a\}} - p(a|X^{t-1}_{-\infty})).\]
Since $T(X^{t-1}_{-\infty})\succeq x$ implies $\pi(T(X^{t-1}_{-\infty}))\leq  \pi(x)$, we obtain $\|\xi_t\|_{\infty} = \|\Leaves{\xi}_t\|_\infty$ and
\[\Ex{\|\xi_t\|_{\infty}^3} \leq \Ex{\frac{1}{n^{3/2}\,\pi(T(X^{t-1}_{-\infty}))^{3/2}}}= \frac{1}{n^{3/2}}\sum_{w\in \Leaves{T}}\frac{\pi(w)}{\pi(w)^{3/2}} = \frac{1}{n^{3/2}} \sum_{w\in \Leaves{T}}\frac{1}{\pi(w)^{1/2}}.\]
Let us now consider the covariance matrices. Clearly,
\[\Leaves{\Sigma}_t:=\Exp{t-1}{\Leaves{\xi}_{t}{\Leaves{\xi}_{t}}'} = \left(\sum_{w\in \Leaves{T}}\Ind{\{T(X^{t-1}_{-\infty})=w\}}\,\frac{e_we_w'}{\pi(w)\,n}\right)\otimes C_t,\]
where
\[C_{t}(a,a'):=\left\{\begin{array}{ll}p(a|X^{t-1}_{-\infty})(1-p(a|X^{t-1}_{-\infty})),&  a=a';\\ - p(a|X^{t-1}_{-\infty})\,p(a'|X^{t-1}_{-\infty}), & a\neq a'.\end{array}\right.\]
We also have
\[\Sigma_t:=\Exp{t-1}{\xi_{t}\xi_{t}'} = \left(\Sum\,\sum_{w\in \Leaves{T}}\Ind{\{T(X^{t-1}_{-\infty})=w\}}\,\frac{e_we_w'}{\pi(w)\,n}\,\Sum'\right)\otimes C_t,\]
We will need a square root for $\Sigma_t$. Note that $C_t$ has the form:
\begin{equation}\label{eq:defCt} C_t = {\rm diag}(p_t) - p_tp_t'\end{equation}
where $p_t\in\R^A$ is as above. The usual fact that $\Var{X} = \Ex{(X-\Ex{X})^2}$ translates into:
\[x'({\rm diag}(p_t) - p_tp_t')x = (x - (p_t'x)\,{\bf 1})'{\rm diag}(p_t)\,(x - (p_t'x)\,{\bf 1}),\]
that is,
\[C_t = (I-p_t{\bf 1}')\,{\rm diag}(p_t)\,(I-{\bf 1}p_t')\]
 where ${\bf 1}$ is the all-ones vector. This implies that
\[C^{1/2}_t = (I-p_t{\bf 1}')\,{\rm diag}(\sqrt{p_t})\]
is a valid square root for this matrix, and
\[\Sigma^{1/2}_t = \left(\sum_{w\in \Leaves{T}}\Ind{\{T(X^{t-1}_{-\infty})=w\}}\,\frac{\Sum\,e_we_w'}{\sqrt{\pi(w)\,n}}\right)\otimes [(I-p_t{\bf 1}')\,{\rm diag}(\sqrt{p_t})]\]
is a valid square root of $\Sigma_t$.
Finally, consider $\Sigma^{1/2}_t\,N$ where $N=_dN(0,I)$ is independent from $\sF_{t-1}$. Given $x\in T$,
\[ (\Sigma^{1/2}_t\,N)(w,a) = \sum_{w\in \Leaves{T}\,:\, w\succeq x}\frac{\sqrt{p_t(a)}N(w,a) - p_t(a)\sum_{b\in A}\sqrt{p_t(b)}N(w,b)}{\sqrt{\pi(x)n}}\,\Ind{\{T(X^{t-1}_{-\infty})=w\}}.\]
So once again $\|\Sigma^{1/2}_t\,N\|_{\infty}$ is achieved at a leaf, and:
\begin{multline*}\Ex{\|\Sigma^{1/2}_t\,N\|_{\infty}^3\mid \sF_{t-1}}  \\= \sum_{w\in\Leaves{T}}\frac{\Ind{\{T(X^{t-1}_{-\infty})=w\}}}{(\pi(w)n)^{3/2}}\,  \Ex{\max_{a\in A}|\sqrt{p_t(a)}N(w,a) - p_t(a)\sum_{b\in A}\sqrt{p_t(b)}N(w,b)|^3\mid\sF_{t-1}}.\end{multline*}
Now, for each $a$ we have:
\[\Ex{|\sqrt{p_t(a)}N(w,a) - p_t(a)\sum_{b\in A}\sqrt{p_t(b)}N(w,b)|^2\mid\sF_{t-1}} = p_t(a),\]
so
\begin{eqnarray*}& \Ex{\max_{a\in A}|\sqrt{p_t(a)}N(w,a) - p_t(a)\sum_{b\in A}\sqrt{p_t(b)}N(w,b)|^3\mid\sF_{t-1}} \\ \leq & \Ex{\sqrt{\sum_{a\in A}|\sqrt{p_t(a)}N(w,a) - p_t(a)\sum_{b\in A}\sqrt{p_t(b)}N(w,b)|^6}}\leq C.\end{eqnarray*}
We conclude that
\[\Ex{\|\Sigma^{1/2}_t\,N\|_{\infty}^3}\leq \sum_{w\in\Leaves{T}}\frac{C\,\pi(w)}{\{\pi(w)n\}^{3/2}}\leq \frac{C}{n^{3/2}}\sum_{w\in \Leaves{T}}\sqrt{\frac{1}{\pi(w)}}.\]
\end{proof}

\begin{proof} (of Proposition \ref{prop:Vbound}) For the first assertion, by Lemma \ref{lem:Sinitial}, it suffices to show that $\|\Leaves{V}-\Leaves{V}_n\|\infty\leq \epsilon$ and $\Leaves{V}_n\preceq \Leaves{V} +  V_\delta^*$ in the typicality event.  This is what we do below.

In computing $\Leaves{V}_n$, we collect terms that have a given $w$ in them, and obtain:
\[\Leaves{V}_n = \sum_{w\in \Leaves{T}}\frac{N_{n-1}(w)}{\pi(w)\,n}\,e_we_w' \otimes \overline{C}_w,\]
where
\begin{equation}\overline{C}_w:=\left(\frac{1}{N_{n-1}(w)}\sum_{t=h^*}^n\Ind{\{T(X^{t-1}_{-\infty})=w\}}\,C_t\right),\end{equation}
with $C_t$ as in \eqnref{defCt}.

Recall from (\ref{def:Cw}) that $C_w\in \R^{A\times A}$ is given by:
\[C_w(a,a'):= \left\{\begin{array}{ll}p(a|w)(1-p(a|w)),&  a=a';\\ - p(a|w)\,p(a'|w), & a\neq a'.\end{array}\right. .\]
By the continuity assumption, for each index $t$ with $T(X^{t-1}_{-\infty})=w$ we have:
\[\|C_t -  C_w\|_\infty\leq 2\gamma.\]
Moreover, since both $C_t$ and $C_w$ have the form prescribed in Lemma \ref{lem:comparing} below,
\[C_t\preceq C_w + (2\sqrt{|A|}+1)\,\gamma\,I_{A\times A}.\]
Therefore, under continuity, we have that for all $w\in\Leaves{T}$:
\begin{equation}\label{eq:compateCCbar}\|\overline{C}_w -  C_w\|_\infty\leq 2\gamma\mbox{ and }\overline{C}_w\preceq C_w + (2\sqrt{|A|}+1)\,\gamma\,I_{A\times A}.\end{equation}
We now compare $V_n^*$ to the deterministic operator $\Leaves{V}$ introduced in (\ref{eq:defVstar}), which we rewrite below.
\[\Leaves{V} = \sum_{w\in \Leaves{T}}e_we_w' \otimes C_w.\]
We note that
\begin{eqnarray*}\|\Leaves{V}_n - \Leaves{V}\|_\infty &\leq & \left\|\sum_{w\in \Leaves{T}}\left(\frac{N_{n-1}(w)}{\pi(w)\,n}-1\right)\,e_we_w' \otimes \overline{C}_w\right\|_\infty\\ & & + \left\|\sum_{w\in \Leaves{T}}e_we_w' \otimes (C_w-\overline{C}_w)\right\|_\infty\\ \mbox{(typicality + (\ref{eq:compateCCbar}))}& \leq & \epsilon + 2\gamma. \end{eqnarray*}
In addition, under typicality (and noting that each $\overline{C}_w\succeq 0$),
\[\Leaves{V}_n\preceq \sum_{w\in \Leaves{T}}(1+\eps)\,e_we_w' \otimes \overline{C}_w\]
Then (\ref{eq:compateCCbar}) gives:
\[\Leaves{V}_n\preceq \sum_{w\in \Leaves{T}}e_we_w' \otimes [(1+\eps)\,C_w + (1+\eps)(2\sqrt{|A|}+1)\,\gamma \,I_{A\times A}] = \Leaves{V} + \Leaves{V}_\delta\]
as per (\ref{eq:defVdeltastar}).
 \end{proof}

\section{Additional technical lemmas}

\subsection{Comparing covariance matrices}
\begin{lemma}\label{lem:comparing}Assume $p,q\in\R^A$ are two probability vectors. Then:
\[{\rm diag}(p) - pp'\preceq {\rm diag}(q) - qq' + (2\sqrt{|A|}+1)\,\max_{a\in A}|p(a) - q(a)|\,I_{A\times A}.\]\end{lemma}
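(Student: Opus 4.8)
The plan is to expand the difference of the two matrices, re\=/express it through the perturbation vector $r := p-q$, and then bound the resulting quadratic form uniformly over unit vectors; the bound we need is one\=/sided (in the $\preceq$ order), so absolute values on the cross terms will suffice.

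First I would record the algebraic identity $pp' - qq' = p\,r' + r\,q'$ with $r=p-q$, which gives
\[({\rm diag}(p) - pp') - ({\rm diag}(q) - qq') = {\rm diag}(r) - p\,r' - r\,q'.\]
Hence the claim is equivalent to showing $x'\big[{\rm diag}(r) - p\,r' - r\,q'\big]x \le (2\sqrt{|A|}+1)\,\|r\|_\infty$ for every $x\in\R^A$ with $\|x\|_2=1$, since this is precisely the statement that $(2\sqrt{|A|}+1)\,\|r\|_\infty\,I_{A\times A} - ({\rm diag}(p)-pp') + ({\rm diag}(q)-qq')\succeq 0$ (the general case follows by homogeneity).

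Next I would bound the three terms separately. For the diagonal term, $x'{\rm diag}(r)x = \sum_{a\in A} r(a)\,x(a)^2 \le \|r\|_\infty\,\|x\|_2^2 = \|r\|_\infty$. For the two rank\=/one terms, note $x'(p r')x = (p'x)(r'x)$; since $p$ is a probability vector we have $\|p\|_2 \le \|p\|_1 = 1$, so $|p'x|\le 1$ by Cauchy--Schwarz, while $|r'x| \le \|r\|_2 \le \sqrt{|A|}\,\|r\|_\infty$. Thus $|x'(p r')x| \le \sqrt{|A|}\,\|r\|_\infty$, and identically $|x'(r q')x| \le \sqrt{|A|}\,\|r\|_\infty$ using $\|q\|_2 \le \|q\|_1 = 1$. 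Adding the three contributions yields $x'\big[{\rm diag}(r) - p r' - r q'\big]x \le (1 + 2\sqrt{|A|})\,\|r\|_\infty = (2\sqrt{|A|}+1)\,\max_{a\in A}|p(a)-q(a)|$, which is exactly what is required.

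There is no substantive obstacle here; the only points to watch are keeping the inequality one\=/sided (so that bounding $|x'(pr')x|$ and $|x'(rq')x|$ is legitimate even though these cross terms are not sign\=/definite) and invoking $\|p\|_2,\|q\|_2\le 1$, which is where the hypothesis that $p,q$ are probability vectors is used.
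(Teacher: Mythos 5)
Your proof is correct and is essentially the paper's argument: both verify the quadratic-form inequality pointwise, bound the diagonal part by $\max_{a\in A}|p(a)-q(a)|\,x'x$, and control the rank-one difference via Cauchy--Schwarz/H\"older together with $\|p-q\|_2\le \sqrt{|A|}\,\|p-q\|_\infty$, yielding the same constant $2\sqrt{|A|}+1$. The only cosmetic difference is that you split $pp'-qq'=p\,r'+r\,q'$ while the paper factors $(p'x)^2-(q'x)^2=((p+q)'x)\,((p-q)'x)$, which amounts to the same estimate.
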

\begin{proof} Fix $x\in \R^A$. Our goal is to show that:
\[{\bf Goal: } \,x'{\rm diag}(p)\,x - (p'x)^2\leq x'{\rm diag}(q)x - (q'x)^2 + (2\sqrt{|A|}+1)\,\max_{a\in A}|p(a) - q(a)|\,x'x.\]
Fix $x\in \R^A$. Then:
\[x'{\rm diag}(p)\,x = \sum_{a\in A}p(a)x(a)^2\leq \sum_{a\in A}q(a)x(a)^2
 + \max_{a\in A}|q(a)-p(a)|\,x'x.\]
Moreover, since $p+q$ has $\ell^1$ norm bounded by $2$,
\begin{eqnarray*}(p'x)^2 - (q'x)^2 &=& ((p+q)'x)\,((p-q)'x)\\ &\leq &  2\|x\|_\infty\,\|p-q\|_2\,\sqrt{x'x}\\ &\leq &2\sqrt{|A|}\,\max_{a\in A}|p(a) - q(a)|\,x'x.\end{eqnarray*}
Combining these two displays gives us our goal.\end{proof}

\end{document}